\documentclass[11pt,reqno]{amsart}

\usepackage[utf8]{inputenc}
\usepackage[T1]{fontenc}
\usepackage{lmodern}
\usepackage{amsmath,amssymb,amsthm,mathrsfs}
\usepackage{mathtools}
\usepackage{geometry}
\usepackage{enumitem}
\usepackage[colorlinks=true,linkcolor=blue,citecolor=red,urlcolor=blue]{hyperref}
\usepackage{microtype}
\usepackage{comment}
\hypersetup{
  pdftitle={Rhaly operators between Dirichlet-type spaces: a complete classification},
  pdfauthor={Yecheng Shi}
}

\theoremstyle{plain}
\newtheorem{theorem}{Theorem}[section]
\newtheorem{lemma}[theorem]{Lemma}
\newtheorem{proposition}[theorem]{Proposition}
\newtheorem{corollary}[theorem]{Corollary}

\theoremstyle{definition}
\newtheorem{definition}[theorem]{Definition}

\theoremstyle{remark}
\newtheorem{remark}[theorem]{Remark}

\newcommand{\D}{\mathbb D}
\newcommand{\T}{\mathbb T}
\newcommand{\N}{\mathbb N}
\newcommand{\Hol}{\operatorname{Hol}}

\title[Rhaly operators between Dirichlet-type spaces]
{Rhaly operators between Dirichlet-type spaces:
a complete classification}
\author{Yecheng Shi}
\address{School of Mathematics and Statistics, Lingnan Normal University,
Zhanjiang 524048, Guangdong, China}
\email{09ycshi@sina.cn}
\subjclass[2020]{Primary 47B91; Secondary 30H20, 30H25, 30H10.}
\keywords{Rhaly operator, Dirichlet-type space, analytic Besov space,
truncated Besov space, essential norm.}

\begin{document}
\begin{abstract}
Let \(\eta=(\eta_n)_{n\ge0}\) be a complex sequence such that
\(F_\eta(z)=\sum_{n\ge0}\eta_nz^n\in\Hol(\D)\), and let
\(\mathcal R_{(\eta)}\) be the associated Rhaly operator.
We give a complete characterization of boundedness and compactness of
\(\mathcal R_{(\eta)}:\mathcal D^p_\alpha\to\mathcal D^q_\beta\) for
\(1<p,q<\infty\) and \(\alpha,\beta>-1\).
The characterization is formulated in terms of the \(H^q\)-norms of
the dyadic blocks of \(F_\eta\). For \(-1<\alpha<p-2\), boundedness
and compactness coincide and reduce to
\(F_\eta\in\mathcal D^q_\beta\). For \(\alpha=p-2\) and for
\(\alpha>p-2\), boundedness is characterized by membership in analytic
truncated Besov spaces and analytic Besov spaces, respectively;
compactness has the same characterization for \(q<p\) and is
characterized by the corresponding little spaces for \(p\le q\).
We also obtain operator norm and essential norm estimates.
As applications, we obtain boundedness and compactness
characterizations for Rhaly operators between weighted Bergman spaces
and for Ces\`aro-type operators induced by positive measures. In particular, we characterize \(C_\mu:B^p\to B^p\), answering a
question left open by Sun, Ye and Zhou and later noted by Tang.
For each \(p>2\), we also
construct a symbol \(F_\eta\in H^\infty\cap\lambda^p_{1/p}\) for which
\(\mathcal R_{(\eta)}\) is not bounded on \(H^p\).
\end{abstract}
\maketitle

\section{Introduction}
\label{sec:introduction}

Let \(\eta=(\eta_n)_{n\ge0}\) be a complex sequence such that
\(F_\eta(z)=\sum_{n\ge0}\eta_nz^n\in\Hol(\D)\). The associated Rhaly
operator is initially defined on analytic polynomials
\(f(z)=\sum_{k\ge0}a_kz^k\) by
\[
        \mathcal R_{(\eta)}f(z)
        =
        \sum_{n=0}^{\infty}
        \eta_n\left(\sum_{k=0}^{n}a_k\right)z^n.
\]
When \(\eta_n=(n+1)^{-1}\), this is the classical Ces\`aro operator.
If \(\mu\) is a finite complex Borel measure on \([0,1)\), then
\(C_\mu=\mathcal R_{(\mu_n)}\), where
\(\mu_n=\int_{[0,1)}t^n\,d\mu(t)\). On Taylor coefficients, \(\mathcal R_{(\eta)}\) is represented by a
Rhaly matrix, also known as a terraced matrix. The terminology goes
back to Leibowitz \cite{Leibowitz1987}, while Rhaly studied these
matrices under the name terraced matrices \cite{Rhaly1989}.
More recent results concern boundedness and compactness on
\(\ell^p\) spaces \cite{GalanopoulosGirelaPrajitura2024}, Schatten
class membership \cite{BellavitaDellepianeStylogiannis2025}, and
spectral properties and invariant subspaces
\cite{GallardoGutierrezPartington2025}.

For \(1<p<\infty\) and \(\alpha>-1\), let \(A^p_\alpha\) be the
weighted Bergman space of all \(f\in\Hol(\D)\) such that
\(\|f\|_{A^p_\alpha}^p
=(\alpha+1)\int_{\D}|f(z)|^p(1-|z|^2)^\alpha\,dA(z)<\infty\),
where \(dA\) denotes normalized area measure on \(\D\).
The Dirichlet-type space \(\mathcal D^p_\alpha\) consists of the
functions \(f\in\Hol(\D)\) such that \(f'\in A^p_\alpha\), with
\[
        \|f\|_{\mathcal D^p_\alpha}^p
        =
        |f(0)|^p+\|f'\|_{A^p_\alpha}^p.
\]
We write \(B^p=\mathcal D^p_{p-2}\) for the classical analytic Besov
space. We also use the analytic Besov spaces \(B^s_{q,u}\), their polynomial
closures \(b^s_{q,\infty}\), the analytic truncated Besov spaces
\(T_u^bB^s_{q,q}\), the little analytic truncated Besov spaces
\(t_u^bB^s_{q,q}\), and the mean-Lipschitz spaces
\(\Lambda_\gamma^t\) and \(\lambda_\gamma^t\).
The spaces \(B^s_{q,u}\), \(b^s_{q,\infty}\),
\(\Lambda_\gamma^t\), and \(\lambda_\gamma^t\) are recalled in
Subsection~\ref{subsec:analytic-besov}, while
\(T_u^bB^s_{q,q}\) and \(t_u^bB^s_{q,q}\) are defined in
Subsection~\ref{subsec:analytic-truncated-besov}.

The classical Ces\`aro operator on Hardy spaces was studied by
Siskakis \cite{Siskakis1987,Siskakis1990} and Miao
\cite{Miao1992}, and on Bergman spaces by Siskakis
\cite{Siskakis1996}. Stempak \cite{Stempak1994} introduced a family
of Ces\`aro averaging operators and studied their action on Hardy
spaces. Andersen subsequently studied these operators on Hardy spaces
\cite{Andersen1996} and Dirichlet spaces \cite{Andersen2004}.
Xiao \cite{Xiao1997} studied Ces\`aro-type operators on Hardy spaces,
\(\mathrm{BMOA}\), and Bloch spaces. For finite positive Borel measures \(\mu\) on \([0,1)\),
Galanopoulos, Girela and Merch\'an
\cite{GalanopoulosGirelaMerchan2022} studied \(C_\mu\) on Hardy and
weighted Bergman spaces, \(\mathrm{BMOA}\), and the Bloch space.
Jin and Tang \cite{JinTang2022} characterized boundedness and
compactness of
\(\mathcal C_\mu:\mathcal D^2_\alpha\to\mathcal D^2_\beta\) for
\(0<\alpha,\beta<2\). Galanopoulos, Girela, Mas and Merch\'an
\cite{GalanopoulosGirelaMasMerchan2023} studied the operators
\(\mathcal H_\mu\) and \(C_\mu\) on the classical Dirichlet space and,
more generally, on analytic Besov spaces. Sun, Ye and Zhou \cite{SunYeZhou2025} characterized the boundedness
and compactness of $C_\mu:B^p\to X$ for Banach spaces $X$ satisfying
$\Lambda^s_{1/s}\subset X\subset\mathcal B$, $p,s>1$, and left open
the characterization of $C_\mu:B^p\to B^p$.
Tang \cite{Tang2025Besov} subsequently noted the corresponding open
problem for the generalized operators
$\mathcal C_{\mu,\alpha}:B^p\to B^p$; the case $\alpha=1$ is $C_\mu$. Galanopoulos, Siskakis and Zhao
\cite{GalanopoulosSiskakisZhao2025} characterized boundedness of the
weighted Ces\`aro-type operators \(\mathcal C_{\mu,\beta}\) from
\(A_\alpha^p\) to \(A_\alpha^q\) for \(1\le p\le q<\infty\).
Blasco and Mas \cite{BlascoMas2026} studied boundedness of
Ces\`aro-type operators between mixed norm spaces and obtained, in
particular, characterizations between weighted Bergman spaces with
different weights. Xie, Liu and Lin
\cite{XieLiuLin2026} studied boundedness and compactness of
\(C_\mu:\mathcal D^p_\alpha\to\mathcal D^q_\beta\); the corrected
statement for \(p>\alpha+2\) is given in
\cite[Theorem~1.2]{LinLiuTangXie2026}. Guo and Tang \cite{GuoTang2026} characterized the boundedness and
compactness of $C_\mu:F(p,p-1,1)\to\mathcal D^p_{p-1}$ for $1<p<\infty$,
and obtained corresponding results from $F(p,q,s)$ into Bloch-type
spaces in the range $s>0$ and $q+2>p$.

Complex measures and arbitrary complex sequences were considered more
recently. Galanopoulos, Girela and Merch\'an
\cite{GalanopoulosGirelaMerchan2023} studied Ces\`aro-type operators
associated with complex Borel measures on \(H^2\) and weighted
Bergman spaces. Blasco studied Ces\`aro-type operators associated with
complex Borel measures on Hardy spaces
\cite{Blasco2024Hardy} and weighted Dirichlet spaces
\cite{Blasco2024WeightedDirichlet}. Bao, Guo, Sun and Wang
\cite{BaoGuoSunWang2024} characterized boundedness and compactness of
\(\mathcal R_{(\eta)}\) on the classical Dirichlet space for
arbitrary complex sequences \(\eta\). Galanopoulos and Girela
\cite{GalanopoulosGirela2025HilbertCesaro} studied Ces\`aro-type
operators induced by complex sequences on
\(\mathcal D_\alpha^2\), \(0\le\alpha\le1\). Lin and Xie
\cite{LinXie2025} studied Ces\`aro-type operators on derivative-type
Hilbert spaces and obtained, in particular, complete characterizations
of boundedness and compactness between different weighted Bergman
spaces. Blasco, Galanopoulos and Girela
\cite{BlascoGalanopoulosGirela2026} characterized
\(\mathcal R_{(\eta)}:\mathcal D_\alpha^2\to\mathcal D_\beta^2\)
for arbitrary complex sequences \(\eta\) and
\(\alpha,\beta\in\mathbb R\). Galanopoulos and Girela
\cite{GalanopoulosGirela2026} obtained boundedness and compactness
conditions involving \(\Lambda^p_{1/p}\) and \(\lambda^p_{1/p}\) on
Hardy, Bergman and Dirichlet spaces, with complete characterizations
on \(H^2\), on \(A^2_\alpha\) for \(\alpha>-1\), and on
\(\mathcal D^2_\alpha\) for \(\alpha>0\). Bao, Tian and Wulan
\cite{BaoTianWulan2026} characterized boundedness and compactness of
Ces\`aro-type operators induced by complex sequences from \(H^p\),
\(0<p<\infty\), and from Banach spaces between
\(\Lambda^2_{1/2}\) and the Bloch space into
\(\Lambda^2_{1/2}\). Bellavita, Dellepiane and Stylogiannis
\cite{BellavitaDellepianeStylogiannis2026} studied boundedness,
compactness and Schatten class membership of Rhaly operators on
weighted Hardy spaces \(H^2(\omega)\). Galanopoulos and Girela
\cite{GalanopoulosGirela2026b} later characterized boundedness and
compactness of
\(\mathcal R_{(\eta)}:\mathcal D_a^p\to\mathcal D_b^p\) in the range
\(p-2<a<\min\{p-1+b,2p-2\}\), \(a-b+1>0\), and of
\(\mathcal R_{(\eta)}:A^p\to A^q\) when \(1<p<q<\infty\) and
\(2/p-1/q<1\).

In this paper, we characterize boundedness and compactness of
\(\mathcal R_{(\eta)}:\mathcal D^p_\alpha\to\mathcal D^q_\beta\)
for arbitrary complex symbols, \(1<p,q<\infty\), and
\(\alpha,\beta>-1\). The characterization is formulated in terms of
the \(H^q\)-norms of the dyadic blocks of \(F_\eta\). For
\(-1<\alpha<p-2\), boundedness and compactness coincide and are
equivalent to \(F_\eta\in\mathcal D^q_\beta\). When
\(\alpha=p-2\), boundedness and compactness coincide for \(q<p\) and
are characterized by membership in an analytic truncated Besov space;
for \(p\le q\), boundedness is characterized by an analytic truncated
Besov space, while compactness is characterized by a little analytic
truncated Besov space. For \(\alpha>p-2\), boundedness and compactness
coincide for \(q<p\) and are characterized by membership in an
analytic Besov space; for \(p\le q\), boundedness is characterized by
an analytic Besov space, while compactness is characterized by a
little Besov space.

Put \(I_0=\{0,1\}\) and
\(I_j=\{2^j,\ldots,2^{j+1}-1\}\) for \(j\ge1\), and set
\(\Delta_jf(z)=\sum_{n\in I_j}\widehat f(n)z^n\).
For \(1<t<\infty\), write \(t'=t/(t-1)\). We denote by
\(\mathcal B(X,Y)\) and \(\mathcal K(X,Y)\) the spaces of bounded and
compact operators from \(X\) to \(Y\), respectively, and by
\(\|T\|_e\) the essential norm of \(T\). For nonnegative quantities
\(A\) and \(B\), we write \(A\lesssim B\) if \(A\le CB\), where
\(C\) depends only on the fixed parameters, and \(A\asymp B\) if
\(A\lesssim B\) and \(B\lesssim A\).

In Theorems~\ref{thm:intro-below-critical}--\ref{thm:intro-above-critical},
we assume that \(\eta=(\eta_n)_{n\ge0}\) is a complex sequence such that
\(F_\eta\in\Hol(\D)\). For \(-1<\alpha<p-2\), we have the following
characterization.

\begin{theorem}
\label{thm:intro-below-critical}
Let \(1<p,q<\infty\), \(-1<\alpha<p-2\), and \(\beta>-1\). Then
\[
        \mathcal R_{(\eta)}
        \in\mathcal B(\mathcal D^p_\alpha,\mathcal D^q_\beta)
        \Longleftrightarrow
        \mathcal R_{(\eta)}
        \in\mathcal K(\mathcal D^p_\alpha,\mathcal D^q_\beta)
        \Longleftrightarrow
        F_\eta\in\mathcal D^q_\beta
        \Longleftrightarrow
        F_\eta\in B^{1-(\beta+1)/q}_{q,q}.
\]
Moreover,
\[
        \|\mathcal R_{(\eta)}\|_{\mathcal D^p_\alpha\to\mathcal D^q_\beta}
        \asymp_{p,q,\alpha,\beta}
        \|F_\eta\|_{\mathcal D^q_\beta}
        \asymp_{p,q,\alpha,\beta}
        \|F_\eta\|_{B^{1-(\beta+1)/q}_{q,q}}.
\]
\end{theorem}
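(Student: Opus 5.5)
Write \(f(z)=\sum_k a_kz^k\) and \(S_nf=\sum_{k\le n}a_k\). The coefficient formula then reads
\[
\mathcal R_{(\eta)}f(z)=\sum_n \eta_n S_nf\, z^n .
\]
Equivalently, \(\mathcal R_{(\eta)}f\) is the coefficient-wise (Hadamard) product of \(F_\eta\) with the power series \(\sum_n S_nf\,z^n=f(z)/(1-z)\).

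The equivalence \(F_\eta\in\mathcal D^q_\beta\Leftrightarrow F_\eta\in B^{1-(\beta+1)/q}_{q,q}\), together with the norm equivalence, is the standard dyadic description of Dirichlet-type spaces:
\[
\|g\|_{\mathcal D^q_\beta}^q\asymp |g(0)|^q+\sum_j 2^{jq(1-(\beta+1)/q)}\|\Delta_j g\|_{H^q}^q,
\]
which I take from Subsection~\ref{subsec:analytic-besov}. So the real content is \(\|\mathcal R_{(\eta)}\|\asymp\|F_\eta\|_{\mathcal D^q_\beta}\), plus compactness whenever \(F_\eta\in\mathcal D^q_\beta\).

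\textbf{Necessity.} I test on \(f\equiv 1\). Then \(S_nf=1\) for all \(n\), so \(\mathcal R_{(\eta)}1=F_\eta\). Since \(\|1\|_{\mathcal D^p_\alpha}=1\), this gives \(\|F_\eta\|_{\mathcal D^q_\beta}\le\|\mathcal R_{(\eta)}\|\) immediately. Compactness implies boundedness trivially.

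\textbf{Sufficiency.} The key point is that for \(\alpha<p-2\), every \(f\in\mathcal D^p_\alpha\) has absolutely convergent Taylor series, with \(\sum_k|a_k|\lesssim\|f\|_{\mathcal D^p_\alpha}\). I would prove this dyadically. Set \(s=1-(\alpha+1)/p>1/p\), so that \(\|f\|_{\mathcal D^p_\alpha}\asymp\|f\|_{B^{s}_{p,p}}\). By Hausdorff--Young for \(p\le2\), and by a Bernstein/Nikolskii-type estimate in general, \(\sum_{k\in I_j}|a_k|\lesssim 2^{j/p}\|\Delta_jf\|_{H^p}\); for \(p>2\) one can pass through \(H^2\) with a factor \(2^{j(1/p)}\) via the \(H^p\to H^\infty\) Nikolskii inequality on degree-\(2^{j+1}\) polynomials. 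Hölder over \(j\) then gives
\[
\sum_j 2^{j/p}\|\Delta_jf\|_{H^p}\lesssim\Big(\sum_j 2^{jsp}\|\Delta_jf\|_{H^p}^p\Big)^{1/p},
\]
since \(\sum_j 2^{-j(s-1/p)p'}<\infty\).

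With this, write \(S_nf=L(f)-\sum_{k>n}a_k\), where \(L(f)=\sum_k a_k\) is a bounded functional. This gives the decomposition
\[
\mathcal R_{(\eta)}f=L(f)F_\eta-\sum_n\eta_n\Big(\sum_{k>n}a_k\Big)z^n .
\]
The first term is a rank-one bounded operator of norm \(\lesssim\|F_\eta\|_{\mathcal D^q_\beta}\). The tail term \(T f\) is the coefficient multiplier by \(\eta_n\) applied to the sequence \(t_n=\sum_{k>n}a_k\). On block \(I_j\), with \(\sum_{k\ge 2^j}|a_k|\lesssim 2^{-j(s-1/p)}\varepsilon_j\), where \(\varepsilon_j\to0\) is \(\ell^{p}\)-controlled by the tail of \(f\)'s norm, I want
\[
\|\Delta_j Tf\|_{H^q}\lesssim \sup_{n\in I_j}|t_n|\cdot\|\Delta_jF_\eta\|_{H^q}.
\]
This is the main obstacle: multiplying the coefficients of \(\Delta_jF_\eta\) by a bounded sequence need not be bounded on \(H^q\). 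Here \(t_n\) restricted to \(I_j\) has bounded variation, \(\sum_{n\in I_j}|t_n-t_{n+1}|\le\sum_{k\in I_j}|a_k|\). Summation by parts against partial sums of \(\Delta_jF_\eta\), which are uniformly \(H^q\)-bounded by the Riesz projection for \(1<q<\infty\), gives the bound with \(\sup|t_n|+\mathrm{Var}\lesssim\sum_{k\ge 2^j}|a_k|\).

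Summing the blocks, I get \(\|Tf\|_{\mathcal D^q_\beta}\lesssim\|F_\eta\|_{\mathcal D^q_\beta}\,\sup_j\sum_{k\ge2^j}|a_k|\), hence \(\|\mathcal R_{(\eta)}\|\lesssim\|F_\eta\|_{\mathcal D^q_\beta}\). In fact the same summation by parts applied directly to \(S_nf\), using \(\sup_n|S_nf|+\sum|a_k|\lesssim\|f\|\), gives boundedness without the decomposition.

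\textbf{Compactness.} Approximate \(\mathcal R_{(\eta)}\) by \(\mathcal R_{(\eta^{(N)})}\), where \(\eta^{(N)}\) is \(\eta\) truncated to \(n<2^N\); these operators have finite rank. The difference is \(\mathcal R_{(\eta-\eta^{(N)})}\), whose norm is \(\lesssim\|F_\eta-\sum_{j<N}\Delta_jF_\eta\|_{\mathcal D^q_\beta}\to0\) by the block-norm formula. Hence \(\mathcal R_{(\eta)}\) is a norm limit of finite-rank operators and is compact. The step to verify carefully is the coefficient bound \(\sum_k|a_k|\lesssim\|f\|_{\mathcal D^p_\alpha}\) for all \(1<p<\infty\); this is exactly where \(\alpha<p-2\) is used.
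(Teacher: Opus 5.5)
Your sufficiency argument has a genuine gap when \(p>2\). The block bound \(\sum_{k\in I_j}|a_k|\lesssim 2^{j/p}\|\Delta_jf\|_{H^p}\) is correct for \(1<p\le2\) (Hausdorff--Young plus H\"older), but it is false for \(p>2\).

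To see this, take a Rudin--Shapiro-type polynomial \(P_j=\sum_{n\in I_j}\varepsilon_nz^n\) with \(\varepsilon_n=\pm1\) and \(\|P_j\|_{H^\infty}\le5\cdot2^{j/2}\), as in the proof of Theorem~\ref{thm:endpoint-failure}. The left side equals \(2^j\), while the right side is \(O(2^{j(1/p+1/2)})\). For \(p>2\) the sharp exponent is \(1/2\), obtained by passing through \(H^2\). Your Nikolskii detour gives \(1/p+1/2\), not \(1/p\).

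Put \(s=1-(\alpha+1)/p\), and suppose \(p>2\) and \(p/2-1\le\alpha<p-2\), i.e.\ \(1/p<s\le1/2\). Then \(f=\sum_{j\ge1}j^{-1}2^{-j}P_j\) satisfies \(\|f\|_{\mathcal D^p_\alpha}^p\lesssim\sum_j2^{jp(s-1/2)}j^{-p}<\infty\), yet \(\sum_k|a_k|=\sum_jj^{-1}=\infty\). So the claimed absolute convergence fails.

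Worse, the weaker input your summation by parts actually needs also fails whenever \(\alpha>p/2-1\). That input is a bound, uniform in \(j\), on the variation \(\sum_{k\in I_j}|a_k|\) of \((S_nf)_{n\in I_j}\). For \(f=P_j\) this variation is of order \(2^j\), whereas \(\|P_j\|_{\mathcal D^p_\alpha}\lesssim2^{j(s+1/2)}\). Hence the bounded-variation multiplier step cannot give \(\|\mathcal R_{(\eta)}f\|_{\mathcal D^q_\beta}\lesssim\|F_\eta\|_{\mathcal D^q_\beta}\|f\|_{\mathcal D^p_\alpha}\) in that range. Your compactness argument uses this norm bound, so it fails there too.

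As written, your proof is valid, and simpler than the paper's, in two cases: for \(1<p\le2\), and for \(p>2\) with \(\alpha<p/2-1\) once \(2^{j/p}\) is replaced by \(2^{j/2}\).

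The missing idea is to control the partial-sum multiplier by \(\|\Delta_jf\|_{H^p}\) rather than by the \(\ell^1\) norm of the coefficients. The paper does this in two steps.

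First, duality with the Dirichlet kernel, \(\|D_N\|_{L^{p'}}\lesssim N^{1/p}\), gives \(|\sum_{k\in J}a_k|\lesssim|J|^{1/p}\|\Pi_Jf\|_{H^p}\) for intervals \(J\) (Lemma~\ref{lem:block-summation-estimate}). This bounds sums, not sums of moduli. H\"older over blocks with \(\sigma=1-(\alpha+2)/p>0\) then yields \(\sup_n|S_nf|\lesssim\|f\|_{\mathcal D^p_\alpha}\) for all \(\alpha<p-2\).

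Second, for \(j\ge1\) write \(\Delta_j(\mathcal R_{(\eta)}f)=S_{2^j-1}f\cdot\Delta_jF_\eta+\mathcal T_{I_j}(\Delta_jf,\Delta_jF_\eta)\). Bound the local triangular term by the bilinear estimate \(\|\mathcal T_I(u,v)\|_{H^q}\lesssim|I|^{1/p}\|u\|_{H^p}\|v\|_{H^q}\) (Lemma~\ref{lem:local-triangular-estimate}). That estimate comes from writing \(\mathcal T_I(u,\cdot)=\int_{\T}u(e^{is})V_s\,dm(s)\) with \(\|V_s\|_{H^q\to H^q}\lesssim\min\{|I|,|s|^{-1}\}\), then applying H\"older with \(\|K_N\|_{L^{p'}}\lesssim N^{1/p}\).

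Since \(2^{j/p}\|\Delta_jf\|_{H^p}\lesssim2^{-j\sigma}\|f\|_{\mathcal D^p_\alpha}\), both terms are \(\lesssim\|f\|_{\mathcal D^p_\alpha}\|\Delta_jF_\eta\|_{H^q}\). Summing over \(j\) gives the norm bound on the whole range \(-1<\alpha<p-2\), and your necessity and compactness arguments then go through unchanged.
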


For \(1<p,q<\infty\) and \(\beta>-1\), put
\(W_J(\eta)=\sum_{j\ge J}2^{j(q-\beta-1)}
\|\Delta_jF_\eta\|_{H^q}^q\), \(J\ge0\).
If \(\alpha=p-2\) and \(q<p\), put \(1/r=1/q-1/p\) and set
\[
        G_{p,q,\beta}(\eta)
        =
        \left(
        \sum_{J\ge0}
        (J+1)^{r/q'}W_J(\eta)^{r/q}
        \right)^{q/r}.
\]
For \(\alpha=p-2\), we have the following characterization.

\begin{theorem}
\label{thm:intro-critical}
Let \(1<p,q<\infty\), \(\beta>-1\), and put
\(
s=1-\frac{\beta+1}{q}.
\)
\begin{enumerate}[label=\textup{(\roman*)},leftmargin=2.2em]

\item If \(q<p\) and \(1/r=1/q-1/p\), then
\[
\begin{aligned}
\mathcal R_{(\eta)}
\in\mathcal B(\mathcal D^p_{p-2},\mathcal D^q_\beta)
&\Longleftrightarrow
\mathcal R_{(\eta)}
\in\mathcal K(\mathcal D^p_{p-2},\mathcal D^q_\beta) \\
&\Longleftrightarrow
G_{p,q,\beta}(\eta)<\infty
\Longleftrightarrow
F_\eta\in T_r^{1/p'}B^s_{q,q}.
\end{aligned}
\]
Moreover,
\[
\|\mathcal R_{(\eta)}\|_{\mathcal D^p_{p-2}\to\mathcal D^q_\beta}
\asymp_{p,q,\beta}
G_{p,q,\beta}(\eta)^{1/q}
\asymp_{p,q,\beta}
\|F_\eta\|_{T_r^{1/p'}B^s_{q,q}}.
\]

\item If \(p\le q\), then
\[
\mathcal R_{(\eta)}
\in\mathcal B(\mathcal D^p_{p-2},\mathcal D^q_\beta)
\Longleftrightarrow
\sup_{J\ge0}(J+1)^{q/p'}W_J(\eta)<\infty
\Longleftrightarrow
F_\eta\in T_\infty^{1/p'}B^s_{q,q},
\]
and
\[
\|\mathcal R_{(\eta)}\|_{\mathcal D^p_{p-2}\to\mathcal D^q_\beta}
\asymp_{p,q,\beta}
\sup_{J\ge0}(J+1)^{1/p'}W_J(\eta)^{1/q}
\asymp_{p,q,\beta}
\|F_\eta\|_{T_\infty^{1/p'}B^s_{q,q}}.
\]
Moreover,
\[
\mathcal R_{(\eta)}
\in\mathcal K(\mathcal D^p_{p-2},\mathcal D^q_\beta)
\Longleftrightarrow
\lim_{J\to\infty}(J+1)^{q/p'}W_J(\eta)=0
\Longleftrightarrow
F_\eta\in t_\infty^{1/p'}B^s_{q,q}.
\]
If \(\mathcal R_{(\eta)}\) is bounded, then
\[
\begin{aligned}
\|\mathcal R_{(\eta)}\|_e
&\asymp_{p,q,\beta}
\operatorname{dist}_{T_\infty^{1/p'}B^s_{q,q}}
\bigl(F_\eta,t_\infty^{1/p'}B^s_{q,q}\bigr) \\
&\asymp_{p,q,\beta}
\limsup_{k\to\infty}
2^{k/p'}
\left(
\sum_{j=2^k-1}^{2^{k+1}-2}
2^{jsq}\|\Delta_jF_\eta\|_{H^q}^q
\right)^{1/q} \\
&\asymp_{p,q,\beta}
\limsup_{J\to\infty}
(J+1)^{1/p'}W_J(\eta)^{1/q}.
\end{aligned}
\]
\end{enumerate}
\end{theorem}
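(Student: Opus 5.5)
The plan is to reduce everything to a dyadic model problem. First, write \(\mathcal R_{(\eta)}f\) in terms of partial sums: with \(S_nf=\sum_{k\le n}a_k\), we have \(\mathcal R_{(\eta)}f=\sum_n\eta_nS_nf\,z^n\), i.e. a coefficient multiplier of \(F_\eta\) by the sequence \((S_nf)\). On the target side, \(\|g\|_{\mathcal D^q_\beta}^q\asymp|g(0)|^q+\sum_j2^{jsq}\|\Delta_jg\|_{H^q}^q\) with \(s=1-(\beta+1)/q\), since \(\mathcal D^q_\beta=B^s_{q,q}\); this is the identification already used for Theorem~\ref{thm:intro-below-critical}. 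On the source side, for \(\alpha=p-2\) the space is \(B^p\), and the relevant quantity is the growth of the partial sums \(S_n f=\lim\) of \(S_n\) evaluated at \(1\), i.e. the values \(f_{(n)}(1)\) where \(f_{(n)}\) is the Taylor polynomial. The key point is that in \(B^p\), \(\sup_{n\in I_j}|S_nf|\lesssim (j+1)^{1/p'}\|f\|_{B^p}\) (the logarithmic growth at the critical index). Moreover, the block differences \(d_j=S_{2^j}f-S_{2^{j-1}}f\) satisfy \(\sum_j|d_j|^p\lesssim\|f\|_{B^p}^p\), and conversely any \(\ell^p\) sequence is realized by lacunary-type test functions. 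Next, I would approximate \(\Delta_j\mathcal R_{(\eta)}f\) by \(S_{2^j}f\cdot\Delta_jF_\eta\) plus an error term \(\Delta_j F_\eta\ast(S_n f-S_{2^j}f)\). By M. Riesz and multiplier bounds on \(H^q\) for \(n\in I_j\), the error is controlled by \(\|\Delta_jF_\eta\|_{H^q}\) times a local \(B^p\)-type quantity of \(\Delta_{j}f\) and its neighbours. This error contributes a term dominated as in the subcritical case, namely \(\sup_j\) of \(2^{js}\|\Delta_jF_\eta\|_{H^q}\), which is dominated by the main conditions.

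The main term is then \(\sum_j 2^{jsq}|S_{2^j}f|^q\|\Delta_jF_\eta\|_{H^q}^q\), with \(S_{2^j}f=\sum_{i\le j}d_i\). This is exactly a discrete Hardy (summation) inequality \(\ell^p\to\ell^q(w)\) with weights \(w_j=2^{jsq}\|\Delta_jF_\eta\|_{H^q}^q\). For \(p\le q\), the classical Hardy-type criterion (Bennett/Muckenhoupt for the summation operator \(\sum_{i\le j}\)) gives boundedness iff \(\sup_J (J+1)^{1/p'}W_J^{1/q}<\infty\), with comparable norm. For \(q<p\), the Maz'ya–Rosin/Bennett characterization gives \(\bigl(\sum_J (J+1)^{r/q'}W_J^{r/q}\bigr)\) up to the standard equivalent form (summing \(W_J^{r/q}\,(J+1)^{r/p'}\,w_J/W_J\) turns into the stated \(G\) after summation by parts). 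Necessity is obtained by testing on functions whose partial sums at \(1\) realize extremal \(\ell^p\) sequences \(d_i\). For example, one can take \(f=\sum_i d_i\,2^{-i}\sum_{n\in I_i}\) of suitably smoothed lacunary polynomials (de la Vallée Poussin kernels), whose \(B^p\) norm is \(\asymp\|d\|_{\ell^p}\); the error term is then harmless because these test blocks have flat coefficients. Finally, the equivalence with the \(T^{1/p'}_uB^s_{q,q}\) norms follows from their definitions in Subsection~\ref{subsec:analytic-truncated-besov}, via regrouping over \(j\in[2^k-1,2^{k+1}-2]\).

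For compactness: when \(q<p\), the \(\ell^p\to\ell^q\) Hardy operator with finite \(G\) is compact, since the tails of \(G\) tend to zero and the finite truncations \(F_\eta\mapsto\sum_{j<N}\Delta_jF_\eta\) give finite-rank approximations in norm. When \(p\le q\), the same truncation yields the upper bound \(\|\mathcal R_{(\eta)}\|_e\lesssim\limsup(J+1)^{1/p'}W_J^{1/q}\). The lower bound uses weakly null normalized test functions supported at high dyadic levels, namely the extremals for \(\sup_{J\ge N}\). The distance formula and the block-regrouped \(\limsup\) are routine comparisons. I expect the main obstacle to be controlling the error term \(\sum_{n\in I_j}\eta_n(S_nf-S_{2^j}f)z^n\) in \(H^q\) uniformly. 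This requires a vector multiplier estimate in which \(S_nf-S_{2^j}f\) varies within the block. I would handle it by writing the difference as \(\sum_{2^j<k\le n}a_k\) and applying Abel summation together with the \(H^q\)-boundedness of partial-sum projections, giving a bound by \(\|\Delta_jF_\eta\|_{H^q}\sum_{|i-j|\le1}\|\Delta_if\|_{H^1}\)-type quantities, which are summable against the \(B^p\) norm.
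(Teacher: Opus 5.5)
\textbf{Gap: the local (in-block) term for \(p>2\).} Your overall architecture matches the paper's. You split \(\Delta_j\mathcal R_{(\eta)}f\) into a Hardy-type main term (partial sum up to block \(j\)) \(\cdot\,\Delta_jF_\eta\) plus a local triangular term. You then apply discrete weighted Hardy inequalities in both regimes, use flat-block test functions, truncate the symbol for compactness, and use weakly null normalized tests for the essential norm. The gap is exactly at the step you flag as the main obstacle. Abel summation with uniformly bounded partial-sum projections gives
\[
\Bigl\|\sum_{n\in I_j}\eta_n\Bigl(\sum_{k\in I_j,\,k\le n}\widehat f(k)\Bigr)z^n\Bigr\|_{H^q}
\le C_q\|\Delta_jF_\eta\|_{H^q}\sum_{k\in I_j}|\widehat f(k)|.
\]
This is the \(\ell^1\)-norm of the Taylor coefficients of \(\Delta_jf\), a Wiener-algebra quantity, not an \(H^1\)-type quantity. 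To sum it against the \(B^p\) norm you need \(\sum_{k\in I_j}|\widehat f(k)|\lesssim 2^{j/p}\|\Delta_jf\|_{H^p}\). That holds for \(p\le2\) (H\"older plus Hausdorff--Young) but fails for \(p>2\). A Rudin--Shapiro polynomial on \(I_j\) has coefficient \(\ell^1\)-norm \(2^j\) and \(H^p\)-norm \(\asymp2^{j/2}\), so the ratio is \(\asymp2^{j(1/2-1/p)}\to\infty\). The tail conditions only give \(w_j\le W_j\lesssim(j+1)^{-q/p'}\) (or comparable polynomial decay when \(q<p\)), which cannot absorb an exponential loss. So for \(p>2\) your argument yields a strictly stronger sufficient condition than the one stated.

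The missing ingredient is the bilinear estimate \(\|\mathcal T_I(u,v)\|_{H^q}\lesssim|I|^{1/p}\|u\|_{H^p}\|v\|_{H^q}\) of Lemma~\ref{lem:local-triangular-estimate}. The paper writes each partial sum as \(\int_{\mathbb T}u(e^{is})D_{n+1}(e^{-is})\,dm(s)\), so that \(\mathcal T_I(u,v)=\int_{\mathbb T}u(e^{is})V_sv\,dm(s)\). Here \(V_s\) is the coefficient multiplier by \(D_{n+1}(e^{-is})\), which equals the difference quotient \(\bigl(v-e^{-is}v(e^{i(\cdot-s)})\bigr)/(1-e^{-is})\). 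Combining this with a bounded-variation bound gives \(\|V_sv\|_{H^q}\lesssim\min\{N,|s|^{-1}\}\|v\|_{H^q}\). Minkowski's inequality and H\"older's inequality with \(\|K_N\|_{L^{p'}}\lesssim N^{1/p}\) then finish the proof. The key difference is that \(|u|\) is integrated against a kernel on the circle rather than \(|\widehat u(k)|\) being summed on the coefficient side.

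Two smaller points. First, for \(q<p\) the local term must be controlled by \(\bigl(\sum_jw_j^{r/q}\bigr)^{q/r}\) via H\"older, not by \(\sup_jw_j\). This is still at most \(G_{p,q,\beta}(\eta)\), since \(w_j\le W_j\). Second, in the necessity step, ``harmless'' needs an actual argument: an upper bound on the error term does not give a lower bound on the main term. What works is that for nonnegative flat blocks the partial sums on \(I_j\) are monotone in \(n\) with values in \([S_j,S_j+x_{j+1}]\). Hence \(S_j\) divided by them is a multiplier of bounded variation, and it yields exactly \(S_j\|\Delta_jF_\eta\|_{H^q}\lesssim\|\Delta_j\mathcal R_{(\eta)}f_x\|_{H^q}\).
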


For \(\alpha>p-2\), we have the following characterization.

\begin{theorem}
\label{thm:intro-above-critical}
Let \(1<p,q<\infty\), \(\alpha>p-2\), \(\beta>-1\), and put
\(
s=\frac{\alpha+2}{p}-\frac{\beta+1}{q}.
\)
\begin{enumerate}[label=\textup{(\roman*)},leftmargin=2.2em]

\item If \(q<p\) and \(1/r=1/q-1/p\), then
\[
\begin{aligned}
\mathcal R_{(\eta)}
\in\mathcal B(\mathcal D^p_\alpha,\mathcal D^q_\beta)
&\Longleftrightarrow
\mathcal R_{(\eta)}
\in\mathcal K(\mathcal D^p_\alpha,\mathcal D^q_\beta) \\
&\Longleftrightarrow
\bigl(2^{js}\|\Delta_jF_\eta\|_{H^q}\bigr)_{j\ge0}\in\ell^r \\
&\Longleftrightarrow
F_\eta\in B^s_{q,r}.
\end{aligned}
\]
Moreover,
\[
\|\mathcal R_{(\eta)}\|_{\mathcal D^p_\alpha\to\mathcal D^q_\beta}
\asymp_{p,q,\alpha,\beta}
\left\|
\bigl(2^{js}\|\Delta_jF_\eta\|_{H^q}\bigr)_{j\ge0}
\right\|_{\ell^r}
\asymp_{p,q,\alpha,\beta}
\|F_\eta\|_{B^s_{q,r}}.
\]

\item If \(p\le q\), then
\[
\mathcal R_{(\eta)}
\in\mathcal B(\mathcal D^p_\alpha,\mathcal D^q_\beta)
\Longleftrightarrow
\sup_{j\ge0}2^{js}\|\Delta_jF_\eta\|_{H^q}<\infty
\Longleftrightarrow
F_\eta\in B^s_{q,\infty},
\]
and
\[
\|\mathcal R_{(\eta)}\|_{\mathcal D^p_\alpha\to\mathcal D^q_\beta}
\asymp_{p,q,\alpha,\beta}
\sup_{j\ge0}2^{js}\|\Delta_jF_\eta\|_{H^q}
\asymp_{p,q,\alpha,\beta}
\|F_\eta\|_{B^s_{q,\infty}}.
\]
Moreover,
\[
\mathcal R_{(\eta)}
\in\mathcal K(\mathcal D^p_\alpha,\mathcal D^q_\beta)
\Longleftrightarrow
\lim_{j\to\infty}
2^{js}\|\Delta_jF_\eta\|_{H^q}=0
\Longleftrightarrow
F_\eta\in b^s_{q,\infty}.
\]
If \(\mathcal R_{(\eta)}\) is bounded, then
\[
\|\mathcal R_{(\eta)}\|_e
\asymp_{p,q,\alpha,\beta}
\operatorname{dist}_{B^s_{q,\infty}}
\bigl(F_\eta,b^s_{q,\infty}\bigr) 
\asymp_{p,q,\alpha,\beta}
\limsup_{j\to\infty}
2^{js}\|\Delta_jF_\eta\|_{H^q}.
\]
\end{enumerate}
\end{theorem}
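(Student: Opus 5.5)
We prove Theorem~\ref{thm:intro-above-critical} by reducing it to a problem about dyadic blocks. Write \(\mathcal R_{(\eta)}f=F_\eta\cdot \widetilde S f\) coefficientwise, where \(\widetilde Sf\) has coefficients \(S_n(f)=\sum_{k\le n}a_k\). On the block \(I_j\) the partial sums \(S_n(f)\) are close to a single value, namely the mean \(f(1-2^{-j})\). More precisely, \(S_n(f)\) is approximately a dyadic average of \(f\) at radius \(1-2^{-j}\). Since \(\alpha>p-2\), the space \(\mathcal D^p_\alpha\) contains functions with large boundary growth. The natural size of \(S_n(f)\) for \(n\in I_j\) is then \(2^{j(\alpha+2)/p-j}\) times an \(\ell^p\)-type quantity, in the sense that
\[
\Bigl(\sum_j 2^{-j(\alpha+2-p)}\sup_{n\in I_j}|S_n(f)|^p\Bigr)^{1/p}\lesssim\|f\|_{\mathcal D^p_\alpha}.
\]
We obtain this from the dyadic description \(\|f\|_{\mathcal D^p_\alpha}^p\asymp \sum_j 2^{j(p-\alpha-1)}\|\Delta_jf\|_{H^p}^p\) (valid for \(1<p<\infty\)), combined with \(|S_n(f)|\le\sum_{i\le j}\|\Delta_if\|_{H^1}\lesssim\sum_{i\le j}\|\Delta_i f\|_{H^p}\) and a Hardy-type inequality. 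The Hardy inequality applies precisely because the weight \(2^{j(\alpha+2-p)}\) grows geometrically when \(\alpha>p-2\).

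\textbf{Upper bound.} Decompose \(\Delta_j(\mathcal R_{(\eta)}f)=\Delta_jF_\eta\ast(S_n(f))_{n\in I_j}\), a coefficient multiplier on the block. We control the multiplier sequence by summation by parts. Write \(S_n=S_{2^j}+\sum_{2^j<k\le n}a_k\); the second part is the partial sum of \(\Delta_j f\). By the Riesz projection bound (this is where \(1<q<\infty\) is used), the multiplier norm on \(H^q\) of a block sequence \((S_n)_{n\in I_j}\) is \(\lesssim |S_{2^j}|+\sum|S_{n+1}-S_n|\). This bound is too crude, so we split instead:
\[
\Delta_j\mathcal R_{(\eta)}f=S_{2^j-1}(f)\,\Delta_jF_\eta+\mathcal R^{loc}_j,
\]
where the local term involves only \(\Delta_jf\). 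The local term is handled by a Marcinkiewicz-multiplier/Riesz-projection argument, giving \(\|\mathcal R^{loc}_j\|_{H^q}\lesssim\|\Delta_jF_\eta\|_{H^q}\cdot 2^{j/p'}\|\Delta_jf\|_{H^p}\)-type bounds via Bernstein/Nikolskii inequalities between \(H^p\) and \(H^q\) on blocks. Hence
\[
\|\mathcal R_{(\eta)}f\|_{\mathcal D^q_\beta}^q\asymp\sum_j 2^{j(q-\beta-1)}\|\Delta_j\mathcal R_{(\eta)}f\|_{H^q}^q\lesssim\sum_j\bigl(2^{js}\|\Delta_jF_\eta\|_{H^q}\bigr)^q\,c_j^q,
\]
with \((c_j)\in\ell^p\) and \(\|c\|_{\ell^p}\lesssim\|f\|_{\mathcal D^p_\alpha}\), where \(c_j\approx 2^{-j((\alpha+2)/p-1)}|S_{2^j}(f)|+\text{local}\). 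Hölder's inequality with \(1/q=1/p+1/r\) gives (i), and \(\ell^p\subset\ell^q\) gives (ii).

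\textbf{Lower bounds.} For the necessity we use test functions.
\begin{enumerate}
\item For (ii), take \(f_j(z)=2^{-j((\alpha+2)/p-1)}\sum_{k\le 2^j}z^k\) suitably smoothed, for instance a de la Vallée Poussin-type polynomial. Then \(\|f_j\|_{\mathcal D^p_\alpha}\lesssim1\) and \(S_n(f_j)\asymp 2^{-j((\alpha+2)/p-1)}2^j\) for \(n\ge 2^{j+1}\). Projecting onto a later block, \(\Delta_{j+2}\mathcal R f_j\), isolates \(2^{j(1-(\alpha+2)/p)}\Delta_{j+2}F_\eta\), which yields \(\sup_j 2^{js}\|\Delta_jF_\eta\|_{H^q}\lesssim\|\mathcal R_{(\eta)}\|\). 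The weak-null property \(f_j\to0\) holds since \(\alpha>p-2\), so the same family gives \(\limsup\) lower bounds for \(\|\mathcal R_{(\eta)}\|_e\).
\item For (i), take \(f=\sum_j\lambda_j2^{-j(\alpha+2)/p}\,\phi_j\) with \(\phi_j\) the smoothed kernels above (which are essentially \(2^j\) multiples of dyadic-block Fejér sums) and \((\lambda_j)\in\ell^p\) chosen by duality, \(\lambda_j^p\asymp(2^{js}\|\Delta_jF_\eta\|_{H^q})^r\). Because the weights are geometric, \(S_n(f)\) on \(I_{j}\) is dominated by the \(j\)-th term. This gives \(\|(2^{js}\|\Delta_jF_\eta\|)\|_{\ell^r}\lesssim\|\mathcal R\|\), after a standard truncation to ensure finiteness. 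For \(q<p\), compactness then follows automatically: \(\ell^r\) tails vanish, so \(\mathcal R_{(\eta)}\) is a norm limit of the finite-rank operators obtained by truncating \(F_\eta\).
\end{enumerate}

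\textbf{Remaining identifications.} The equivalences with \(B^s_{q,r}\), \(b^s_{q,\infty}\), and the distance formula are taken from the dyadic characterizations of analytic Besov spaces recalled in Subsection~\ref{subsec:analytic-besov}. In the case \(p\le q\), compactness is equivalent to \(\lim_j 2^{js}\|\Delta_jF_\eta\|_{H^q}=0\), and \(\|\mathcal R\|_e\asymp\limsup_j\): the upper estimate comes from truncating \(F_\eta\) to \(\sum_{j<N}\Delta_jF_\eta\), and the lower estimate from the weakly null family \(f_j\) in item 1.

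\textbf{Main obstacle.} We expect the hardest step to be the upper estimate for the local term, namely controlling the non-constant part of the partial sums inside a block \(I_j\) as an \(H^q\) multiplier uniformly in \(j\). We would try writing it as \(\Delta_jF_\eta\) multiplied coefficientwise by partial sums of \(\Delta_jf\). This equals a Riesz-type projection of \(\Delta_jF_\eta(z)\cdot\overline{z}\)-shifted products, and we would bound it by \(\|\Delta_jF_\eta\|_{H^q}\,\|\Delta_j f\|_{\ell^1}\), then use \(\|\Delta_jf\|_{\ell^1}\lesssim 2^{j/p'}\|\Delta_jf\|_{H^p}\) together with the exponent bookkeeping \(2^{j(q-\beta-1)/q}2^{j/p'}=2^{js}2^{j(p-\alpha-1)/p}\).
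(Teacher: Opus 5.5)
Your architecture matches the paper's: split \(\Delta_j(\mathcal R_{(\eta)}f)\) into \(S_{2^j-1}(f)\Delta_jF_\eta\) plus a local term, apply H\"older with \(1/q=1/p+1/r\) or \(\ell^p\subset\ell^q\), truncate \(F_\eta\) for compactness, and use weakly null tests for the essential norm. However, the local term is not actually controlled. You need \(\|\mathcal R^{\mathrm{loc}}_j\|_{H^q}\lesssim 2^{j/p}\|\Delta_jf\|_{H^p}\|\Delta_jF_\eta\|_{H^q}\). Your identity \(2^{j(q-\beta-1)/q}2^{j/p'}=2^{js}2^{j(p-\alpha-1)/p}\) holds only for \(p=2\), whereas with \(2^{j/p}\) it holds for all \(p\). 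The route you give is \(\|\mathcal R^{\mathrm{loc}}_j\|_{H^q}\lesssim\|\Delta_jF_\eta\|_{H^q}\|\widehat{\Delta_jf}\|_{\ell^1}\), which is just the bounded-variation bound you had already called too crude. It works for \(1<p\le2\), since Hausdorff--Young and H\"older give \(\|\widehat{\Delta_jf}\|_{\ell^1}\le2^{j/p}\|\Delta_jf\|_{H^p}\); your \(2^{j/p'}\) version fails for \(p<2\), as Dirichlet kernels on \(I_j\) show. For \(p>2\), though, the best constant in \(\|\widehat u\|_{\ell^1}\le C\|u\|_{H^p}\) over \(u\) supported on \(I_j\) is \(\asymp2^{j/2}\) (Rudin--Shapiro polynomials). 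A single Rudin--Shapiro block \(f\) on \(I_j\) then gives \(2^{-j\delta}\|\widehat{\Delta_jf}\|_{\ell^1}\asymp2^{j(1/2-1/p)}\|f\|_{\mathcal D^p_\alpha}\), a loss the source norm cannot absorb. The missing ingredient is the bilinear estimate of Lemma~\ref{lem:local-triangular-estimate}. Inserting \(a_k=\int_{\T}u(e^{is})e^{-iks}\,dm(s)\) writes the local term as \(\int_{\T}u(e^{is})V_sv\,dm(s)\), where \(V_sv(z)=(v(z)-e^{-is}v(e^{-is}z))/(1-e^{-is})\) has \(H^q\)-operator norm \(\lesssim\min\{N,|s|^{-1}\}\). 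H\"older against this kernel in \(L^{p'}\) then yields exactly \(N^{1/p}\) for every \(p\). Your opening bound \(|S_n(f)|\le\sum_{i\le j}\|\Delta_if\|_{H^1}\) is also false. The correct input is \(|\sum_{k\in I_i}\widehat f(k)|\lesssim2^{i/p}\|\Delta_if\|_{H^p}\), after which your displayed \(\ell^p\) estimate is right.

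The lower-bound test functions also fail. Take any \(f_j\) supported in \(\{0,\dots,C2^j\}\) with coefficients bounded by \(|\widehat{f_j}(0)|\), such as Dirichlet, Fej\'er or de la Vall\'ee Poussin kernels. Its coefficient sum is \(\lesssim2^j|f_j(0)|\le2^j\|f_j\|_{\mathcal D^p_\alpha}\). Isolating \(A_{j+2}\), however, requires a sum \(\gtrsim2^{j\delta}\|f_j\|_{\mathcal D^p_\alpha}\) with \(\delta=(\alpha+2)/p-1\). This is impossible for \(\alpha>2p-2\), and at \(\alpha=2p-2\) (e.g.\ \(A^2=\mathcal D^2_2\)) the Dirichlet kernel loses \((j+1)^{1/p}\). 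Your stated normalization is also off: \(2^{-j\delta}\sum_{k\le2^j}z^k\) has \(\mathcal D^2_\alpha\)-norm \(\asymp2^{j(1-\alpha)}\) when \(p=2\), \(0<\alpha<1\). The paper instead uses one-block tests \(h_m=2^{m\delta}|I_m|^{-1}\sum_{k\in I_m}z^k\). These are uniformly bounded in \(\mathcal D^p_\alpha\) for every \(\alpha>p-2\), have coefficient sum \(2^{m\delta}\), and satisfy \(\Delta_{m+1}(\mathcal R_{(\eta)}h_m)=2^{m\delta}\Delta_{m+1}F_\eta\). The same family is weakly null, which gives the compactness and essential-norm lower bounds. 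For (i), ``dominated by the \(j\)-th term'' is not what is needed. You must make \(\Delta_j(\mathcal R_{(\eta)}f)\) a scalar multiple of \(\Delta_jF_\eta\) with scalar \(\ge x_j2^{(j-1)\delta}\). The paper does this with \(f_x=\sum_{j\in E_\varepsilon}x_jh_{j-1}\) over one parity class, so no test block meets the read-off block and the partial sums are constant on \(I_j\). Positivity of the coefficients gives the lower bound on the scalar, and Lemma~\ref{lem:diagonal-sequence-multiplier} then gives \(\|A\|_{\ell^r}\lesssim\|\mathcal R_{(\eta)}\|\).
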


Taken together, Theorems~\ref{thm:intro-below-critical}--\ref{thm:intro-above-critical}
characterize boundedness and compactness of
\(\mathcal R_{(\eta)}:\mathcal D^p_\alpha\to\mathcal D^q_\beta\)
for \(1<p,q<\infty\) and \(\alpha,\beta>-1\), and provide
corresponding operator norm estimates. Whenever \(q<p\),
boundedness implies compactness. For \(p\le q\), boundedness
and compactness coincide when \(-1<\alpha<p-2\), but may differ
when \(\alpha\ge p-2\). For \(p\le q\), compactness is
characterized by \(F_\eta\in t_\infty^{1/p'}B^s_{q,q}\) when
\(\alpha=p-2\), and by \(F_\eta\in b^s_{q,\infty}\) when
\(\alpha>p-2\), with \(s\) as defined in the respective theorems.
For \(p\le q\) and \(\alpha\ge p-2\), the essential norm of a
bounded operator is comparable to the distance of \(F_\eta\)
to the little space in the corresponding compactness criterion.

Taking \(q=p\) and \(\beta=p-2\) in
Theorem~\ref{thm:intro-critical}\textup{(ii)} gives boundedness
and compactness criteria, together with an essential norm estimate,
for \(\mathcal R_{(\eta)}\) on
\(B^p=\mathcal D^p_{p-2}\). For \(p=2\), the boundedness and
compactness criteria recover the known tail conditions on the
classical Dirichlet space
\cite{BaoGuoSunWang2024,BlascoGalanopoulosGirela2026}.
More generally, for \(p=q=2\), the three theorems recover the
\(\alpha,\beta>-1\) part of the characterization of Blasco,
Galanopoulos and Girela
\cite{BlascoGalanopoulosGirela2026}.

Theorem~\ref{thm:intro-above-critical} also covers the parameter
ranges studied in \cite[Theorems~3--4]{GalanopoulosGirela2026b}.
When \(q=p\), it recovers their characterization of
\(\mathcal R_{(\eta)}:\mathcal D_a^p\to\mathcal D_b^p\) for
\(p-2<a<\min\{p-1+b,2p-2\}\) and \(a-b+1>0\), with
\(s=(a-b+1)/p\). For unweighted Bergman spaces,
\(A^u\simeq\mathcal D_u^u\) when \(1<u<\infty\).
Taking \(\alpha=p\) and \(\beta=q\) gives
\(s=2/p-1/q\) for \(1<p<q<\infty\) and \(2/p-1/q<1\).
This agrees with the exponent used in the proof of
\cite[Theorem~3]{GalanopoulosGirela2026b}.
More generally,
Theorems~\ref{thm:intro-below-critical}--\ref{thm:intro-above-critical}
apply to \(1<p,q<\infty\) and \(\alpha,\beta>-1\), including
\(p\ne q\) for Dirichlet-type spaces.

Corollary~\ref{cor:same-space-classification} specializes the
results to operators on \(\mathcal D^p_\alpha\), while
Corollary~\ref{cor:mixed-Bergman} treats pairs of weighted
Bergman spaces and gives essential norm estimates when
\(p\le q\). For a finite positive Borel measure \(\mu\) on
\([0,1)\), Corollary~\ref{cor:cesaro-positive-moment}
translates the dyadic criteria into conditions on the moment
sequence \((\mu_n)\). In particular,
Corollary~\ref{cor:cesaro-positive-moment}\textup{(ii)(b)}
gives boundedness, compactness, and essential norm criteria for
\(C_\mu:B^p\to B^p\). This answers the question posed by
Sun, Ye and Zhou \cite{SunYeZhou2025}; Tang
\cite{Tang2025Besov} later noted the corresponding problem
for generalized Ces\`aro operators. Finally,
Theorem~\ref{thm:endpoint-failure} shows that, for \(p>2\),
even \(F_\eta\in H^\infty\cap\lambda^p_{1/p}\) does not imply
boundedness of \(\mathcal R_{(\eta)}\) on \(H^p\).

For arbitrary complex \(\eta\), the positivity and monotonicity
of positive-measure moments cannot be used. Our proofs rely on
dyadic decompositions and coefficient multiplier estimates.
When \(-1<\alpha<p-2\), the Taylor coefficient partial sums
of each \(f\in\mathcal D^p_\alpha\) define a bounded coefficient
multiplier on \(A^q_\beta\). This gives the criterion
\(F_\eta\in\mathcal D^q_\beta\).

At \(\alpha=p-2\), each block of \(\mathcal R_{(\eta)}f\)
splits into a local triangular term and a term involving
\(\Delta_jF_\eta\) and preceding blocks of \(f\). A bilinear
estimate controls the local term, while weighted discrete Hardy
inequalities give the required bounds. For necessity when
\(q<p\), we combine normalized block-average tests with
coefficient multipliers of uniformly bounded variation.
When \(p\le q\), we use normalized sums of consecutive dyadic
blocks. Corollary~\ref{cor:critical-truncated-besov-norms}
identifies the resulting tail criteria with analytic truncated
Besov conditions.

When \(\alpha>p-2\), sufficiency follows from a discrete
convolution estimate and the characterization of diagonal
multipliers from \(\ell^p\) to \(\ell^q\). For necessity, we
test on polynomials supported on alternating dyadic blocks
when \(q<p\), and on a single block when \(p\le q\). The
resulting boundedness criteria are \(F_\eta\in B^s_{q,r}\)
when \(q<p\), where \(1/r=1/q-1/p\), and
\(F_\eta\in B^s_{q,\infty}\) when \(p\le q\). For
\(\alpha>p-2\) and \(p\le q\), compactness is characterized by
\(F_\eta\in b^s_{q,\infty}\). For \(p\le q\) and \(\alpha\ge p-2\), finite-rank
approximations and weakly null test sequences yield the
essential norm estimates.

Section~\ref{sec:preliminaries} collects the dyadic tools and the
analytic Besov and truncated Besov spaces used in the proofs.
Sections~\ref{sec:below-critical}--\ref{sec:above-critical} treat
\(-1<\alpha<p-2\), \(\alpha=p-2\), and \(\alpha>p-2\),
respectively. Section~\ref{sec:endpoint-regime} gives the weighted
Bergman corollary and the Hardy space counterexample.
Section~\ref{sec:cesaro-type} treats operators induced by positive
measures; the required coefficient estimates are proved in
Appendix~\ref{app:monotone-reductions}.

\section{Preliminaries}
\label{sec:preliminaries}

\subsection{Dyadic decompositions, multipliers, and norm equivalences}

\begin{lemma}
\label{lem:bv-multiplier}
Let \(1<p<\infty\), and suppose that
\[
A_\lambda
:=
\sup_{n\ge0}|\lambda_n|
+
\sum_{n=0}^{\infty}|\lambda_{n+1}-\lambda_n|
<\infty.
\]
Then the coefficient multiplier
\(T_\lambda f=\sum_{n\ge0}\lambda_n\widehat f(n)z^n\)
is bounded on \(H^p\), and
\(\|T_\lambda f\|_{H^p}\le C_pA_\lambda\|f\|_{H^p}\).
\end{lemma}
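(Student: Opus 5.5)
The plan is to write \(T_\lambda\) as an average of partial sum operators (Abel summation) and then use the uniform boundedness of the partial sum projections on \(H^p\) for \(1<p<\infty\), which is M.~Riesz's theorem.

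First reduce to analytic polynomials \(f\). The bound then extends to all of \(H^p\) by density. Indeed, \(\widehat{T_\lambda f}(n)=\lambda_n\widehat f(n)\) is continuous in \(f\) coefficientwise, so a Fatou-type argument passes the estimate to limits.

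For a polynomial \(f\) of degree at most \(N\), let \(S_kf=\sum_{n\le k}\widehat f(n)z^n\). Summation by parts gives
\[
T_\lambda f=\sum_{n=0}^{N}\lambda_n\bigl(S_nf-S_{n-1}f\bigr)
=\lambda_N S_Nf+\sum_{n=0}^{N-1}(\lambda_n-\lambda_{n+1})S_nf,
\]
with \(S_{-1}f=0\). Since \(S_Nf=f\), taking \(H^p\) norms yields
\[
\|T_\lambda f\|_{H^p}\le \Bigl(|\lambda_N|+\sum_{n\ge0}|\lambda_{n+1}-\lambda_n|\Bigr)\sup_k\|S_k\|_{H^p\to H^p}\,\|f\|_{H^p}.
\]

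The key input is \(\sup_k\|S_k\|_{H^p\to H^p}\le C_p\), which follows from M.~Riesz's theorem. Write \(S_kf=P_+f-z^{k+1}P_+(\bar z^{k+1}f)\), where \(P_+\) is the Riesz (Szeg\H{o}) projection on \(L^p(\T)\). Since \(P_+\) is bounded on \(L^p(\T)\) for \(1<p<\infty\) and multiplication by unimodular monomials is isometric, this gives \(C_p\le 2\|P_+\|_{L^p\to L^p}\). This bound is the only nontrivial step and also the reason the restriction \(1<p<\infty\) is needed. Combining it with the summation-by-parts estimate gives \(\|T_\lambda f\|_{H^p}\le C_pA_\lambda\|f\|_{H^p}\).

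To finish, for general \(f\in H^p\) apply the estimate to the polynomials \(S_Nf\to f\) in \(H^p\). Then \(T_\lambda S_Nf\) is Cauchy in \(H^p\), and its limit has coefficients \(\lambda_n\widehat f(n)\), so it equals \(T_\lambda f\), and the bound persists.
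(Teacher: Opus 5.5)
Your proof is correct and follows the paper's route. The paper cites the standard bounded variation multiplier theorem and notes that the estimate also follows from Abel summation and the uniform boundedness of the Taylor partial sum operators on \(H^p\); you carry out exactly that argument, deriving the partial-sum bound from M.~Riesz's theorem via \(S_kf=f-z^{k+1}P_+(\bar z^{k+1}f)\).
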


\begin{proof}
This is the standard bounded variation multiplier theorem; see
\cite[Theorem~12.7.2]{JevticVukoticArsenovic2016}. The stated estimate
also follows from Abel summation and the uniform boundedness of the
Taylor partial sum operators on \(H^p\).
\end{proof}

As an immediate consequence, we have the following interval projection estimate.

\begin{lemma}
\label{lem:Hp-interval-projections}
Let \(1<p<\infty\), let \(J\subset\N_0\) be a finite integer interval,
and set
\(\Pi_Jh=\sum_{n\in J}\widehat h(n)z^n\).
Then
\(\|\Pi_Jh\|_{H^p}\le C_p\|h\|_{H^p}\),
where \(C_p\) is independent of \(J\).
\end{lemma}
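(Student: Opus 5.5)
The plan is to apply Lemma~\ref{lem:bv-multiplier} directly, using as multiplier the indicator sequence of the interval \(J\). Write \(J=\{a,a+1,\ldots,b\}\) with \(0\le a\le b\), and set \(\lambda_n=1\) for \(n\in J\) and \(\lambda_n=0\) otherwise. With this choice \(T_\lambda=\Pi_J\).

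We then compute the constant \(A_\lambda\). First, \(\sup_n|\lambda_n|=1\). Second, the differences \(\lambda_{n+1}-\lambda_n\) vanish except at the two transition points: \(n=a-1\) when \(a\ge1\), where the jump is into \(J\), and \(n=b\), where the jump is out of \(J\). Each of these has absolute value \(1\). Hence \(\sum_n|\lambda_{n+1}-\lambda_n|\le2\), and so \(A_\lambda\le3\) no matter where \(J\) sits or how long it is.

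Lemma~\ref{lem:bv-multiplier} now gives \(\|\Pi_Jh\|_{H^p}\le 3C_p\|h\|_{H^p}\). After renaming the constant, this is the claim, with \(C_p\) independent of \(J\).

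The only point needing care is that Lemma~\ref{lem:bv-multiplier} is applied to a general \(h\in H^p\), not just to polynomials. This causes no difficulty: that lemma is already stated for all \(f\in H^p\), and \(\Pi_Jh\) is a polynomial because \(J\) is finite. There is no real obstacle in this argument. The substance lies entirely in the bounded variation multiplier theorem, which rests on the uniform boundedness of the Riesz projection (equivalently, of the partial sum operators) on \(H^p\) for \(1<p<\infty\).
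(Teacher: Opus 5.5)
Your proposal is correct and is exactly the paper's argument: apply Lemma~\ref{lem:bv-multiplier} to the multiplier \(\lambda_n=\mathbf 1_J(n)\), which has \(A_\lambda\le 3\) independently of the position and length of \(J\). Your explicit count of the (at most two) jumps, and your remark that the lemma applies to all \(h\in H^p\), just spell out what the paper leaves implicit.
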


\begin{proof}
Apply Lemma~\ref{lem:bv-multiplier} to
\(\lambda_n=\mathbf1_J(n)\), for which \(A_\lambda\le3\).
\end{proof}

We also use the following standard dyadic decomposition for weighted
Bergman spaces; see \cite[(2.3)]{Blasco1995} and
\cite{MateljevicPavlovic1984,BKV1999}.

\begin{proposition}
\label{prop:bergman-dyadic-model}
Let \(1<p<\infty\) and \(\beta>-1\). Then, for every
\(g\in\Hol(\D)\),
\[
        \|g\|_{A^p_\beta}^p
        \asymp_{p,\beta}
        \sum_{j=0}^{\infty}
        2^{-j(\beta+1)}\|\Delta_jg\|_{H^p}^p.
\]
\end{proposition}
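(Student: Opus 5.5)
The plan is to reduce the Bergman norm to integral means on circles, and then to compare each circle mean with the dyadic blocks using Lemma~\ref{lem:Hp-interval-projections}. Write \(M_p(r,g)=\|g_r\|_{H^p}\), where \(g_r(z)=g(rz)\). Polar coordinates give
\[
\|g\|_{A^p_\beta}^p\asymp\int_0^1 M_p(r,g)^p(1-r)^\beta\,dr .
\]
Split \([0,1)\) into the annuli \(R_k=[1-2^{-k},1-2^{-k-1})\) for \(k\ge0\). On each of these \((1-r)^\beta\asymp2^{-k\beta}\) and the length is \(2^{-k-1}\). Since \(M_p(r,g)\) is nondecreasing in \(r\), it follows that
\[
\|g\|_{A^p_\beta}^p\asymp\sum_k2^{-k(\beta+1)}M_p(r_k,g)^p
\]
up to a shift of index, with \(r_k=1-2^{-k}\). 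So it suffices to compare \(\sum_k 2^{-k(\beta+1)}M_p(r_k,g)^p\) with \(\sum_j2^{-j(\beta+1)}\|\Delta_jg\|_{H^p}^p\).

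\emph{Lower bound for \(\|g\|_{A^p_\beta}\).} Note \(\Delta_j(g_r)=(\Delta_jg)_r\). For \(n\in I_j\) and \(r=r_{j+1}\) we have \(r^{n}\ge r^{2^{j+1}}\ge c>0\) uniformly in \(j\). The multiplier \(\lambda_n=r^{-n}\mathbf 1_{I_j}(n)\) is monotone on \(I_j\) with values between \(1\) and \(c^{-1}\), so \(A_\lambda\lesssim1\). Lemma~\ref{lem:bv-multiplier} applied to \((\Delta_jg)_r\) then gives
\[
\|\Delta_jg\|_{H^p}\lesssim\|(\Delta_j g)_{r_{j+1}}\|_{H^p}\lesssim M_p(r_{j+1},g),
\]
where the last step uses Lemma~\ref{lem:Hp-interval-projections}. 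Summing yields \(\sum_j2^{-j(\beta+1)}\|\Delta_jg\|^p_{H^p}\lesssim\|g\|_{A^p_\beta}^p\).

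\emph{Upper bound.} This is the main obstacle, since \(M_p(r_k,g)\le\sum_j\|(\Delta_jg)_{r_k}\|_{H^p}\) involves all blocks. Blocks with \(j\) large relative to \(k\) are damped. The multiplier \(r_k^n\mathbf 1_{I_j}(n)\) is monotone with sup at most \(r_k^{2^j}\le e^{-2^{j-k}}\), so \(\|(\Delta_jg)_{r_k}\|_{H^p}\lesssim e^{-2^{j-k}}\|\Delta_jg\|_{H^p}\). For \(j\le k\) we only have the trivial bound \(\|\Delta_jg\|_{H^p}\). Hence
\[
M_p(r_k,g)\lesssim\sum_j a_{jk}\|\Delta_jg\|_{H^p},\qquad a_{jk}=\min\bigl(1,e^{-2^{j-k}}\bigr).
\]
Set \(x_j=2^{-j(\beta+1)/p}\|\Delta_jg\|_{H^p}\) and \(\varepsilon=(\beta+1)/p>0\). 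The needed estimate becomes \(\ell^p\)-boundedness of the matrix \(b_{kj}=2^{-(k-j)\varepsilon}a_{jk}\). For \(j\le k\) this decays like \(2^{-(k-j)\varepsilon}\), using \(\beta>-1\). For \(j>k\) it decays like \(2^{(j-k)\varepsilon}e^{-2^{j-k}}\), which is summable. So \(b_{kj}\le\phi(k-j)\) with \(\phi\in\ell^1(\mathbb Z)\), and Young's inequality for discrete convolution gives
\[
\sum_k2^{-k(\beta+1)}M_p(r_k,g)^p\lesssim\sum_jx_j^p .
\]
Combining the two directions proves the proposition, with constants depending only on \(p\) and \(\beta\).
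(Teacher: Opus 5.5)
Your argument is correct. The paper does not prove this proposition; it only quotes it from the literature (Blasco; Mateljevi\'c--Pavlovi\'c). Your proposal is therefore a self-contained proof where the paper has none, rather than an alternative route.

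It is essentially the standard argument behind those references:
\begin{enumerate}
\item discretize the radial integral over the annuli $1-r\asymp 2^{-k}$;
\item compare $\|(\Delta_jg)_{r}\|_{H^p}$ with $r^{2^j}\|\Delta_jg\|_{H^p}$ through bounded-variation multipliers;
\item sum with an $\ell^1(\mathbb Z)$ convolution kernel, where the decay for $j\le k$ comes from $\beta>-1$.
\end{enumerate}
What your version buys is that it uses only Lemma~\ref{lem:bv-multiplier} and Lemma~\ref{lem:Hp-interval-projections}. Neither depends on the proposition, so the paper's preliminaries could be made self-contained without circularity.

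Two small points to tidy.

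First, the preliminary claim $\|g\|_{A^p_\beta}^p\asymp\int_0^1M_p(r,g)^p(1-r)^\beta\,dr$ is not a pointwise comparison of weights near $r=0$, because of the factor $2r$ coming from $dA$. It does follow from monotonicity of $M_p(r,g)$. Alternatively, skip it and run the annulus decomposition directly with the exact weight $2r(1-r^2)^\beta$, whose mass on $R_k$ is $\asymp_\beta 2^{-k(\beta+1)}$ for every $k\ge0$, including $k=0$.

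Second, in the upper bound the $\min(1,\cdot)$ in $a_{jk}$ is vacuous, since $e^{-2^{j-k}}<1$ always. Also, for $j=0$ the supremum of $r_k^n\mathbf 1_{I_0}(n)$ is $1$, not $r_k^{2^0}=r_k$. Both are harmless: $j=0$ falls under your trivial bound, and $e^{-2^{j-k}}\ge e^{-1}$ whenever $j\le k$, so the constant is absorbed.
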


The interval projection estimate also holds on weighted Bergman
spaces.

\begin{lemma}
\label{lem:interval-projections}
Let \(1<p<\infty\), \(\alpha>-1\), and let
\(J\subset\N_0\) be a finite integer interval. Then
\[
        \|\Pi_Jh\|_{A^p_\alpha}
        \le C_{p,\alpha}\|h\|_{A^p_\alpha},
        \qquad h\in A^p_\alpha,
\]
where \(C_{p,\alpha}\) is independent of \(J\).
\end{lemma}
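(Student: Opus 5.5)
The idea is to deduce the Bergman-space estimate from the Hardy-space one, Lemma~\ref{lem:Hp-interval-projections}, by integrating over circles. For \(0<r<1\) write \(h_r(\zeta)=h(r\zeta)\). Then
\[
(\Pi_Jh)_r=\Pi_J(h_r),
\]
since \(\widehat{h_r}(n)=\widehat h(n)r^n\) and \(\Pi_J\) acts coefficientwise.

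\emph{Step 1 (pointwise in \(r\)).} The dilate \(h_r\) is analytic on a neighbourhood of \(\overline{\D}\), so it lies in \(H^p\) and \(\|h_r\|_{H^p}=M_p(r,h)\). Applying Lemma~\ref{lem:Hp-interval-projections} to \(h_r\) gives
\[
M_p(r,\Pi_Jh)=\|\Pi_J(h_r)\|_{H^p}\le C_p\,\|h_r\|_{H^p}=C_p\,M_p(r,h),
\]
with \(C_p\) independent of \(J\) and of \(r\).

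\emph{Step 2 (integrate in \(r\)).} Write the weighted Bergman norm in polar coordinates:
\[
\|h\|_{A^p_\alpha}^p=2(\alpha+1)\int_0^1 M_p(r,h)^p(1-r^2)^\alpha r\,dr.
\]
Raise the bound from Step~1 to the \(p\)-th power and integrate against \(2(\alpha+1)(1-r^2)^\alpha r\,dr\). This yields
\[
\|\Pi_Jh\|_{A^p_\alpha}\le C_p\,\|h\|_{A^p_\alpha}.
\]
So one may take \(C_{p,\alpha}=C_p\), which is in fact independent of \(\alpha\) as well as of \(J\).

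\emph{Checks.} Since \(J\) is finite, \(\Pi_Jh\) is a polynomial, so all the quantities above are finite and the polar-coordinate formula applies without any convergence issues. Essentially no step is hard. The only point needing care is that Lemma~\ref{lem:Hp-interval-projections} is applied to the dilates \(h_r\), which belong to \(H^p\), rather than to \(h\) itself, which need not.

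As a cross-check, one could instead use Proposition~\ref{prop:bergman-dyadic-model}: \(\Pi_J\) commutes with each \(\Delta_j\) as an interval projection on the block \(I_j\), so Lemma~\ref{lem:Hp-interval-projections} bounds each block. That route gives a constant depending on \(\beta\), whereas the polar argument above is cleaner.
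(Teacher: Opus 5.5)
Your proof is correct and is exactly the paper's argument: apply Lemma~\ref{lem:Hp-interval-projections} to the dilates \(h_r\), use \((\Pi_Jh)_r=\Pi_J(h_r)\), and integrate the resulting bound \(M_p(r,\Pi_Jh)\le C_pM_p(r,h)\) against the weight in polar coordinates. In your cross-check remark the constant would depend on \(\alpha\), not \(\beta\), but that aside plays no role in the argument.
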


\begin{proof}
Apply Lemma~\ref{lem:Hp-interval-projections} to
\(h_r(z)=h(rz)\) and integrate in polar coordinates, using
\(\Pi_Jh_r=(\Pi_Jh)_r\).
\end{proof}

We also use the following dyadic Bernstein estimate; see
\cite[Lemmas~4.4.6 and~7.4.1]{JevticVukoticArsenovic2016}.
It also follows from Lemma~\ref{lem:bv-multiplier}, applied to the
multipliers \((n/2^j)\mathbf1_{I_j}(n)\) and
\((2^j/n)\mathbf1_{I_j}(n)\).

\begin{lemma}
\label{lem:dyadic-Bernstein}
Let \(1<p<\infty\), \(j\ge1\), and let \(P\) be an analytic
polynomial with \(\operatorname{supp}\widehat P\subset I_j\). Then
\[
\|P'\|_{H^p}\asymp_p 2^j\|P\|_{H^p}.
\]
\end{lemma}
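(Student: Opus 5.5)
The plan is to realize differentiation and its inverse on a single dyadic block as coefficient multipliers, and to bound each of these multipliers by Lemma~\ref{lem:bv-multiplier}. Write \(P=\sum_{n\in I_j}c_nz^n\), so that \(zP'(z)=\sum_{n\in I_j}nc_nz^n\).

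For the upper bound, set
\[
\lambda_n=\frac{n}{2^j}\mathbf 1_{I_j}(n).
\]
Then \(\sup_n|\lambda_n|\le 2\). On \(I_j\) the sequence is increasing, so its internal variation is at most \(1\), and the two jumps at the endpoints of \(I_j\) contribute at most \(1+2\). Hence \(A_\lambda\le 6\). Since \(P\) is supported in \(I_j\), we have \(zP'=2^jT_\lambda P\), and Lemma~\ref{lem:bv-multiplier} gives
\[
\|zP'\|_{H^p}\le 6C_p2^j\|P\|_{H^p}.
\]
Multiplication by \(z\) is an isometry on \(H^p\) because \(|z|=1\) on \(\T\), so \(\|P'\|_{H^p}=\|zP'\|_{H^p}\). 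This gives \(\|P'\|_{H^p}\lesssim_p 2^j\|P\|_{H^p}\).

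For the lower bound, set
\[
\mu_n=\frac{2^j}{n}\mathbf 1_{I_j}(n),
\]
which is well defined because \(j\ge1\) forces \(0\notin I_j\). Here \(0<\mu_n\le 1\), the sequence is monotone on \(I_j\), and its total variation is bounded by an absolute constant. Since \(P=2^{-j}\cdot 2^j\sum_{n\in I_j}\tfrac1n\,(nc_n)z^n\), we can write \(P=2^{-j}T_\mu(zP')\). Lemma~\ref{lem:bv-multiplier} then yields
\[
\|P\|_{H^p}\lesssim_p 2^{-j}\|zP'\|_{H^p}=2^{-j}\|P'\|_{H^p},
\]
which is the reverse inequality.

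The argument has no genuine obstacle. The only point to check is that the variation bounds are uniform in \(j\), including the jumps at the two endpoints of \(I_j\), and the computations above show this. The restriction \(j\ge1\) matters only because it keeps \(0\) out of the support, which is what makes \(\mu_n\) well defined.
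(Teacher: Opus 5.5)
Your proposal is correct and follows the route the paper itself indicates: apply Lemma~\ref{lem:bv-multiplier} to the multipliers \((n/2^j)\mathbf 1_{I_j}(n)\) and \((2^j/n)\mathbf 1_{I_j}(n)\). The paper's primary justification is a citation to the standard Bernstein-type lemmas in Jevti\'c--Vukoti\'c--Arsenovi\'c. Your uniform variation bounds and the identities \(zP'=2^jT_\lambda P\) and \(P=2^{-j}T_\mu(zP')\) are all correct.
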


By the preceding estimates, we obtain the following dyadic
characterization of the Dirichlet-type spaces.

\begin{proposition}
\label{prop:dirichlet-dyadic-model}
Let \(1<p<\infty\) and \(\alpha>-1\). Then, for every
\(f\in\Hol(\D)\),
\[
        \|f\|_{\mathcal D^p_\alpha}^p
        \asymp_{p,\alpha}
        |f(0)|^p
        +
        \sum_{j=0}^{\infty}
        2^{j(p-\alpha-1)}
        \|\Delta_jf\|_{H^p}^p.
\]
\end{proposition}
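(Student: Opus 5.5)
We reduce to the Bergman dyadic model applied to the derivative. By definition \(\|f\|_{\mathcal D^p_\alpha}^p=|f(0)|^p+\|f'\|_{A^p_\alpha}^p\). Proposition~\ref{prop:bergman-dyadic-model}, applied with \(g=f'\) and weight \(\alpha\), gives
\[
\|f'\|_{A^p_\alpha}^p\asymp_{p,\alpha}\sum_{j\ge0}2^{-j(\alpha+1)}\|\Delta_j(f')\|_{H^p}^p .
\]
It therefore suffices to compare \(\|\Delta_j(f')\|_{H^p}\) with \(2^j\|\Delta_jf\|_{H^p}\), up to an index shift and the constant term. Once this is done, \(2^{-j(\alpha+1)}2^{jp}=2^{j(p-\alpha-1)}\) produces exactly the weights in the statement.

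The key identity is \(\widehat{f'}(n)=(n+1)\widehat f(n+1)\). Hence \(\Delta_j(f')=(D_jf)'\), where \(D_jf=\sum_{n\in I_j+1}\widehat f(n)z^n\) is the block shifted by one. So the blocks of \(f'\) correspond to the intervals \(I_j+1=\{2^j+1,\dots,2^{j+1}\}\) of \(f\), which differ from \(I_j\) only at the endpoints. This misalignment is the main nuisance. We handle it with Lemma~\ref{lem:bv-multiplier}: the multiplier \(\lambda_n=\frac{2^j}{n}\mathbf 1_{I_j+1}(n)\) and its inverse on that interval both have variation bounded independently of \(j\). Therefore
\[
\|(D_jf)'\|_{H^p}\asymp_p 2^j\|D_jf\|_{H^p}
\]
for \(j\ge1\), with the Bernstein estimate of Lemma~\ref{lem:dyadic-Bernstein} transported to shifted blocks. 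For \(j=0\) the block is a polynomial of degree at most two and is treated directly.

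It remains to compare \(\sum_j 2^{j(p-\alpha-1)}\|D_jf\|_{H^p}^p\) with \(\sum_j2^{j(p-\alpha-1)}\|\Delta_jf\|_{H^p}^p\).

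\emph{Upper bound.} Write \(D_jf=\Pi_{I_j+1}(\Delta_jf+\Delta_{j+1}f)\). Lemma~\ref{lem:Hp-interval-projections} then gives \(\|D_jf\|\lesssim\|\Delta_jf\|+\|\Delta_{j+1}f\|\). Since the weights of adjacent indices are comparable (they differ by the factor \(2^{p-\alpha-1}\)), the sum is controlled by the \(\Delta\)-sum.

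\emph{Lower bound.} Similarly \(\Delta_jf=\Pi_{I_j}(D_{j-1}f+D_jf)\) for \(j\ge1\), which gives the reverse inequality up to the term involving \(\widehat f(1)\). That term lies in \(\Delta_0 f\) and is controlled by \(\|f'\|\), since \(|\widehat f(1)|=|f'(0)|\lesssim\|f'\|_{A^p_\alpha}\).

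Finally, the constant term \(\widehat f(0)\) appears in \(\Delta_0f\). It is bounded by \(|f(0)|\) on one side and absorbed into \(|f(0)|^p\) on the other. Thus \(\|\Delta_0f\|_{H^p}^p\lesssim|f(0)|^p+|\widehat f(1)|^p\), and both directions close. This establishes Proposition~\ref{prop:dirichlet-dyadic-model}.

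The only genuinely delicate point is making the Bernstein-type equivalence uniform in \(j\) for the shifted blocks. We settle it by the bounded-variation multiplier argument rather than by citing Lemma~\ref{lem:dyadic-Bernstein} verbatim. Everything else is bookkeeping with adjacent blocks.
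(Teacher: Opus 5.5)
Your proposal is correct and follows essentially the paper's route. You apply Proposition~\ref{prop:bergman-dyadic-model} to \(f'\), prove a Bernstein-type comparison with bounded-variation multipliers, and absorb the one-index misalignment between blocks of \(f'\) and blocks of \(f\) by interval projections and comparability of adjacent weights; the paper does the same, except that it projects the blocks of \(f'\) onto the supports \(\{2^j-1,\ldots,2^{j+1}-2\}\) of \((\Delta_jf)'\) rather than shifting the blocks of \(f\) to \(I_j+1\). The one structural difference is that you argue directly for arbitrary \(f\in\Hol(\D)\), whereas the paper first treats polynomials and then passes to general \(f\) via a Cauchy-sequence and uniform-boundedness argument for \(Q_Nf\); your shortcut is legitimate because Proposition~\ref{prop:bergman-dyadic-model} is stated for every holomorphic \(g\) and all your comparisons are termwise between series of nonnegative terms.
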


\begin{proof}
We first consider an analytic polynomial \(P\), and put
\(P_j=\Delta_jP\). For \(j\ge1\), set
\(E_j=\{2^j-1,\ldots,2^{j+1}-2\}\). Then
\[
        \operatorname{supp}\widehat{P_j'}
        \subset E_j
        \subset I_{j-1}\cup I_j.
\]
For \(j,k\ge1\),
\[
\begin{aligned}
\Delta_k(P')
&=
\Pi_{I_k}(P')
=
\Pi_{I_k}(P_k'+P_{k+1}') \\
&=
\Pi_{I_k\cap E_k}(P_k')
+
\Pi_{I_k\cap E_{k+1}}(P_{k+1}'),
\end{aligned}
\]
and
\[
\begin{aligned}
P_j'
&=
\Pi_{E_j}(P')
=
\Pi_{E_j}\bigl(\Delta_{j-1}(P')+\Delta_j(P')\bigr) \\
&=
\Pi_{E_j\cap I_{j-1}}\Delta_{j-1}(P')
+
\Pi_{E_j\cap I_j}\Delta_j(P').
\end{aligned}
\]
Also,
\[
\Delta_0(P')
=
P_0'+\Pi_{\{1\}}(P_1'),
\qquad
P_0'=\widehat P(1).
\]
By Proposition~\ref{prop:bergman-dyadic-model},
\[
\|P'\|_{A^p_\alpha}^p
\asymp_{p,\alpha}
\sum_{k=0}^{\infty}
2^{-k(\alpha+1)}
\|\Delta_k(P')\|_{H^p}^p.
\]
The preceding identities and
Lemma~\ref{lem:Hp-interval-projections}, together with
\(2^{-j(\alpha+1)}
\asymp_\alpha2^{-(j-1)(\alpha+1)}\), give
\[
\sum_{k=0}^{\infty}
2^{-k(\alpha+1)}
\|\Delta_k(P')\|_{H^p}^p
\asymp_{p,\alpha}
|\widehat P(1)|^p
+
\sum_{j=1}^{\infty}
2^{-j(\alpha+1)}
\|(\Delta_jP)'\|_{H^p}^p.
\]
Applying Lemma~\ref{lem:dyadic-Bernstein} now yields
\[
\|P'\|_{A^p_\alpha}^p
\asymp_{p,\alpha}
|\widehat P(1)|^p
+
\sum_{j=1}^{\infty}
2^{j(p-\alpha-1)}
\|\Delta_jP\|_{H^p}^p.
\]
Since
\(|P(0)|^p+|\widehat P(1)|^p\asymp_p\|\Delta_0P\|_{H^p}^p\),
this proves the asserted equivalence for analytic polynomials.

For a general \(f\in\Hol(\D)\), put
\(Q_Nf=\sum_{j=0}^N\Delta_jf\).
Suppose first that the dyadic sum in the statement is finite. Applying
the polynomial estimate to \(Q_Nf-Q_Mf\) shows that
\((Q_Nf)\) is Cauchy in \(\mathcal D^p_\alpha\). Its limit is \(f\):
indeed, \(Q_Nf\to f\) locally uniformly, while point evaluations are
continuous on \(\mathcal D^p_\alpha\). Passing to the limit gives
\[
        \|f\|_{\mathcal D^p_\alpha}^p
        \lesssim_{p,\alpha}
        |f(0)|^p
        +
        \sum_{j=0}^{\infty}
        2^{j(p-\alpha-1)}
        \|\Delta_jf\|_{H^p}^p.
\]

Conversely, suppose that \(f\in\mathcal D^p_\alpha\). Then
\[
        (Q_Nf)'
        =
        \Pi_{\{0,\ldots,2^{N+1}-2\}}(f'),
        \qquad
        Q_Nf(0)=f(0).
\]
Applying Lemma~\ref{lem:Hp-interval-projections} to the radial
dilates of \(f'\) and integrating against the Bergman weight gives
\[
        \|(Q_Nf)'\|_{A^p_\alpha}
        \lesssim_p
        \|f'\|_{A^p_\alpha},
\]
and hence
\[
        \|Q_Nf\|_{\mathcal D^p_\alpha}
        \lesssim_p
        \|f\|_{\mathcal D^p_\alpha},
\]
uniformly in \(N\). Applying the polynomial estimate to \(Q_Nf\)
therefore gives
\[
        |f(0)|^p
        +
        \sum_{j=0}^{N}
        2^{j(p-\alpha-1)}
        \|\Delta_jf\|_{H^p}^p
        \lesssim_{p,\alpha}
        \|f\|_{\mathcal D^p_\alpha}^p.
\]
Letting \(N\to\infty\) proves the reverse inequality.
\end{proof}

\subsection{Analytic Besov and mean-Lipschitz spaces}
\label{subsec:analytic-besov}

For \(s\in\mathbb R\), \(1<q<\infty\), and
\(1\le u\le\infty\), let \(B^s_{q,u}\) be the analytic Besov space
consisting of the functions \(F\in\Hol(\D)\) such that
\[
\|F\|_{B^s_{q,u}}
:=
\left\|
\bigl(2^{js}\|\Delta_jF\|_{H^q}\bigr)_{j\ge0}
\right\|_{\ell^u}
<\infty.
\]

We first compare this norm with the standard Besov norm defined by a
smooth dyadic decomposition; see
\cite[Section~2.1]{Peller2024Besov}. Let
\(w\in C^\infty(\mathbb R)\) satisfy
\[
w\ge0,\qquad
\operatorname{supp}w\subset[1/2,2],\qquad
w(t)=1-w(t/2),\quad 1\le t\le2,
\]
and define
\[
\widetilde\Delta_jF(z)
=
\sum_{n\ge0}w(n/2^j)\widehat F(n)z^n
\quad (j\ge1),
\qquad
\widetilde\Delta_0F(z)
=
\widehat F(0)+\widehat F(1)z.
\]
The sequences
\(\bigl(w(n/2^j)\bigr)_{n\ge0}\), \(j\ge1\), have uniformly bounded
variation, since
\[
\sup_{n\ge0}\left|w\!\left(\frac n{2^j}\right)\right|
\le \|w\|_\infty,
\qquad
\sum_{n\ge0}
\left|
w\!\left(\frac{n+1}{2^j}\right)
-
w\!\left(\frac n{2^j}\right)
\right|
\le
\|w'\|_{L^1},
\]
uniformly in \(j\). Thus Lemma~\ref{lem:bv-multiplier} shows that
the operators \(\widetilde\Delta_j\) are uniformly bounded on \(H^q\).

Since \(\operatorname{supp}w\subset[1/2,2]\) and
\(w(1/2)=w(2)=0\), we have \(w(n/2^j)=0\) unless
\(2^{j-1}<n<2^{j+1}\). Hence
\(\widetilde\Delta_jF\) involves only the Taylor coefficients with
indices in \(I_{j-1}\cup I_j\).
On the other hand, if \(n\in I_j\), then \(1\le n/2^j<2\), and hence
\(
w(n/2^j)+w(n/2^{j+1})=1.
\) Hence, for \(j\ge1\),
\[
\widetilde\Delta_jF
=
\widetilde\Delta_j(\Delta_{j-1}F+\Delta_jF),
\qquad\text{and}\qquad
\Delta_jF
=
\Delta_j(\widetilde\Delta_jF+\widetilde\Delta_{j+1}F).
\]
By Lemmas~\ref{lem:bv-multiplier} and
\ref{lem:Hp-interval-projections}, respectively,
\[
\begin{aligned}
\|\widetilde\Delta_jF\|_{H^q}
&\lesssim_q
\|\Delta_{j-1}F\|_{H^q}+\|\Delta_jF\|_{H^q},\\
\|\Delta_jF\|_{H^q}
&\lesssim_q
\|\widetilde\Delta_jF\|_{H^q}
+\|\widetilde\Delta_{j+1}F\|_{H^q},
\qquad j\ge1.
\end{aligned}
\]
Since
\(2^{(j-1)s}\asymp_s2^{js}\asymp_s2^{(j+1)s}\),
these estimates, together with
\(\widetilde\Delta_0F=\Delta_0F\), give
\[
\left\|
\bigl(2^{js}\|\Delta_jF\|_{H^q}\bigr)_{j\ge0}
\right\|_{\ell^u}
\asymp
\left\|
\bigl(2^{js}\|\widetilde\Delta_jF\|_{H^q}\bigr)_{j\ge0}
\right\|_{\ell^u}.
\]
Hence the norm defined by \((\Delta_j)\) is equivalent to the standard
smooth Besov norm.

By Proposition~\ref{prop:dirichlet-dyadic-model},
\(\mathcal D^p_\alpha=B^{\,1-(\alpha+1)/p}_{p,p}\) and
\(B^p=B^{1/p}_{p,p}\), with equivalent norms, for
\(1<p<\infty\) and \(\alpha>-1\).
We denote by \(b^s_{q,\infty}\) the closure of the analytic
polynomials in \(B^s_{q,\infty}\).

\begin{lemma}
\label{lem:little-besov-distance}
Let \(s\in\mathbb R\), \(1<q<\infty\), and
\(F\in B^s_{q,\infty}\). Then
\[
\operatorname{dist}_{B^s_{q,\infty}}
\bigl(F,b^s_{q,\infty}\bigr)
=
\limsup_{j\to\infty}
2^{js}\|\Delta_jF\|_{H^q}.
\]
In particular,
\[
F\in b^s_{q,\infty}
\quad\Longleftrightarrow\quad
2^{js}\|\Delta_jF\|_{H^q}\longrightarrow0.
\]
\end{lemma}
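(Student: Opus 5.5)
We prove the two inequalities separately and write \(L:=\limsup_{j\to\infty}2^{js}\|\Delta_jF\|_{H^q}\). Recall that the norm on \(B^s_{q,\infty}\) is the \(\ell^\infty\)-norm of the sequence \(\bigl(2^{js}\|\Delta_jF\|_{H^q}\bigr)_{j\ge0}\).

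\emph{Upper bound.} The natural approximants are the dyadic partial sums
\[
Q_NF=\sum_{j=0}^N\Delta_jF,
\]
which are analytic polynomials and hence belong to \(b^s_{q,\infty}\). Since \(\Delta_j(F-Q_NF)=0\) for \(j\le N\) and \(\Delta_j(F-Q_NF)=\Delta_jF\) for \(j>N\), we get exactly
\[
\|F-Q_NF\|_{B^s_{q,\infty}}=\sup_{j>N}2^{js}\|\Delta_jF\|_{H^q}.
\]
Letting \(N\to\infty\) gives \(\operatorname{dist}(F,b^s_{q,\infty})\le L\).

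\emph{Lower bound.} First let \(P\) be an analytic polynomial. Then \(\Delta_jP=0\) for all \(j\) beyond the degree of \(P\), so for those \(j\)
\[
2^{js}\|\Delta_jF\|_{H^q}=2^{js}\|\Delta_j(F-P)\|_{H^q}\le\|F-P\|_{B^s_{q,\infty}}.
\]
Taking the limsup gives \(L\le\|F-P\|_{B^s_{q,\infty}}\).

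Now let \(G\in b^s_{q,\infty}\) be arbitrary. Choose polynomials \(P_k\to G\) in \(B^s_{q,\infty}\). Then \(\|F-P_k\|\to\|F-G\|\), so \(L\le\|F-G\|_{B^s_{q,\infty}}\). Taking the infimum over \(G\) gives \(L\le\operatorname{dist}(F,b^s_{q,\infty})\).

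The only care needed is that \(\Delta_j\) acts termwise on the coefficients, so \(\Delta_j(F-P)=\Delta_jF-\Delta_jP\) is immediate. No multiplier estimate is needed, and equality holds exactly rather than up to constants.

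\emph{The ``in particular''.} Since \(b^s_{q,\infty}\) is closed by definition, \(F\in b^s_{q,\infty}\) if and only if the distance is \(0\). By the identity just proved, this holds if and only if \(L=0\), that is, \(2^{js}\|\Delta_jF\|_{H^q}\to0\).

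No step is a genuine obstacle. The point requiring the most attention is that the closure is taken in the \(B^s_{q,\infty}\)-norm itself, which justifies passing from polynomials to general elements of \(b^s_{q,\infty}\) in the lower bound.
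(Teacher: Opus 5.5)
Your proof is correct and follows the paper's argument. The upper bound uses the same dyadic partial sums \(\sum_{j\le N}\Delta_jF\), and the lower bound likewise exploits the vanishing of the high dyadic blocks of polynomials. The only cosmetic difference is in the lower bound: the paper first shows \(2^{js}\|\Delta_jG\|_{H^q}\to0\) for every \(G\in b^s_{q,\infty}\) and then applies the triangle inequality, whereas you prove \(L\le\|F-P\|_{B^s_{q,\infty}}\) for polynomials \(P\) and pass to general \(G\) by continuity of the norm.
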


\begin{proof}
Let \(F_N=\sum_{j=0}^N\Delta_jF\). Then
\(F_N\in b^s_{q,\infty}\) and
\[
\|F-F_N\|_{B^s_{q,\infty}}
=
\sup_{j>N}2^{js}\|\Delta_jF\|_{H^q}.
\]
This gives
\[
\operatorname{dist}_{B^s_{q,\infty}}
\bigl(F,b^s_{q,\infty}\bigr)
\le
\limsup_{j\to\infty}2^{js}\|\Delta_jF\|_{H^q}.
\]

Conversely, let \(G\in b^s_{q,\infty}\). Given
\(\varepsilon>0\), choose an analytic polynomial \(P\) such that
\(\|G-P\|_{B^s_{q,\infty}}<\varepsilon\). For all sufficiently large
\(j\), \(\Delta_jP=0\), and hence
\[
2^{js}\|\Delta_jG\|_{H^q}
\le
\|G-P\|_{B^s_{q,\infty}}
<\varepsilon.
\]
Thus \(2^{js}\|\Delta_jG\|_{H^q}\to0\), and therefore
\[
\limsup_{j\to\infty}
2^{js}\|\Delta_jF\|_{H^q}
\le
\|F-G\|_{B^s_{q,\infty}}.
\]
Taking the infimum over \(G\in b^s_{q,\infty}\) gives the reverse
inequality. The last assertion follows immediately.
\end{proof}

We shall also use the radial characterization of \(B^s_{q,u}\);
see \cite[Section~2.1, formulas~(2.5)--(2.6)]{Peller2024Besov}.
Write \(M_q(\rho,F)=\|F(\rho,\cdot)\|_{L^q(\T)}\), and let
\(m\ge0\) be an integer with \(m>s\). For \(1\le u<\infty\),
\[
\|F\|_{B^s_{q,u}}
\asymp_{s,q,u,m}
\sum_{k=0}^{m-1}|F^{(k)}(0)|
+
\left(
\int_0^1
M_q(\rho,F^{(m)})^u
(1-\rho)^{(m-s)u-1}\,d\rho
\right)^{1/u}.
\]
For \(u=\infty\), the integral term is replaced by
\(\sup_{0<\rho<1}(1-\rho)^{m-s}M_q(\rho,F^{(m)})\).
When \(m=0\), the sum is omitted and \(F^{(0)}=F\). Moreover,
\[
F\in b^s_{q,\infty}
\quad\Longleftrightarrow\quad
(1-\rho)^{m-s}M_q(\rho,F^{(m)})
\longrightarrow0
\qquad (\rho\to1^-).
\]
For \(s<0\), taking \(m=0\) in the radial characterization above
identifies \(B^s_{q,u}\) with the corresponding mixed norm space. In
particular, \(B^s_{q,q}=A^q_{-sq-1}\), with equivalent norms.

For \(1<q<\infty\) and \(0<s<1\), let \(\Lambda_s^q\) be the
mean-Lipschitz space consisting of the functions \(F\in\Hol(\D)\)
such that
\[
\|F\|_{\Lambda_s^q}
:=
|F(0)|
+
\sup_{0<\rho<1}
(1-\rho)^{1-s}M_q(\rho,F')
<\infty,
\]
and let \(\lambda_s^q\) consist of those
\(F\in\Lambda_s^q\) for which
\[
\lim_{\rho\to1^-}(1-\rho)^{1-s}M_q(\rho,F')
=0.
\]
Taking \(m=1\) in the radial characterization above gives
\(B^s_{q,\infty}=\Lambda_s^q\) and
\(b^s_{q,\infty}=\lambda_s^q\), with equivalent norms; see also
\cite[Theorem~A and Remark~1]{GalanopoulosGirela2026}.

\begin{proposition}
\label{prop:dyadic-mean-lipschitz}
Let \(1<q<\infty\), \(0<s<1\), and \(F\in\Hol(\D)\). Then
\[
\begin{aligned}
F\in\Lambda_s^q
&\quad\Longleftrightarrow\quad
\sup_{j\ge0}2^{js}\|\Delta_jF\|_{H^q}<\infty,\\
F\in\lambda_s^q
&\quad\Longleftrightarrow\quad
\lim_{j\to\infty}2^{js}\|\Delta_jF\|_{H^q}=0.
\end{aligned}
\]
\end{proposition}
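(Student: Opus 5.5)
The plan is to deduce this from the identifications made just before the statement: \(B^s_{q,\infty}=\Lambda_s^q\) and \(b^s_{q,\infty}=\lambda_s^q\), with equivalent norms, together with Lemma~\ref{lem:little-besov-distance}. Since \(\|F\|_{B^s_{q,\infty}}=\sup_j2^{js}\|\Delta_jF\|_{H^q}\) by definition, the first equivalence is exactly \(\Lambda_s^q=B^s_{q,\infty}\) as sets. The second equivalence is \(\lambda_s^q=b^s_{q,\infty}\) combined with the last assertion of Lemma~\ref{lem:little-besov-distance}. So the real content is to justify these two identifications via the radial characterization with \(m=1\). I would give a self-contained dyadic argument rather than rely only on the citation.

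\textbf{Step 1: dyadic bound implies the Lipschitz bound.} Assume \(\sup_j2^{js}\|\Delta_jF\|_{H^q}=M<\infty\). Write \(F'=\sum_j(\Delta_jF)'\). For \(\rho\in(1-2^{-N},1-2^{-N-1}]\), use Lemma~\ref{lem:dyadic-Bernstein} and the decay \(\rho^{n}\le e^{-c2^{j-N}}\) for \(n\in I_j\). The coefficient multiplier \(\rho^n\mathbf1_{I_j}(n)\) has variation \(\lesssim\rho^{2^j}\) by Lemma~\ref{lem:bv-multiplier}, so
\[
M_q(\rho,(\Delta_jF)')\lesssim 2^{j}e^{-c2^{j-N}}\|\Delta_jF\|_{H^q}\le M\,2^{j(1-s)}e^{-c2^{j-N}}.
\]
Summing over \(j\) gives \(M_q(\rho,F')\lesssim M2^{N(1-s)}\asymp M(1-\rho)^{s-1}\), using \(s<1\) for the sum over \(j\le N\) and the exponential decay for \(j>N\). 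For the little-oh version, split off \(j\le J_0\) (a polynomial, whose contribution times \((1-\rho)^{1-s}\) tends to \(0\)) and use \(\sup_{j>J_0}2^{js}\|\Delta_jF\|_{H^q}<\varepsilon\).

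\textbf{Step 2: Lipschitz bound implies dyadic bound.} Here \(s>0\) is used. Take \(\rho_j=1-2^{-j}\). By Lemma~\ref{lem:dyadic-Bernstein},
\[
\|\Delta_jF\|_{H^q}\asymp2^{-j}\|(\Delta_jF)'\|_{H^q}.
\]
Write \((\Delta_jF)'\) as a multiplier applied to \((F')_{\rho_j}\): the multiplier is \(\rho_j^{-(n)}\) restricted to the shifted block \(n\in E_j\), with \(\rho_j^{-n}\le e^{C}\). It has bounded variation uniformly in \(j\), so Lemma~\ref{lem:bv-multiplier} gives
\[
\|(\Delta_jF)'\|_{H^q}\lesssim M_q(\rho_j,F')\lesssim\|F\|_{\Lambda^q_s}2^{j(1-s)}.
\]
Hence \(2^{js}\|\Delta_jF\|_{H^q}\lesssim\|F\|_{\Lambda_s^q}\). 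The same inequality transfers the little-oh condition directly, since \(2^{js}\|\Delta_jF\|_{H^q}\lesssim (1-\rho_j)^{1-s}M_q(\rho_j,F')\to0\). The case \(j=0\) and the term \(|F(0)|\) are trivial.

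The main obstacle is Step 1: one must control the tail sum uniformly in \(\rho\), and handle the multiplier \(\rho^n\) on each block through Lemma~\ref{lem:bv-multiplier}. The condition \(s<1\) is needed for the geometric sum \(\sum_{j\le N}2^{j(1-s)}\asymp2^{N(1-s)}\). Everything else is bookkeeping.
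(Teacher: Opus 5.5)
Your opening reduction is exactly the paper's proof. The paper only cites \(B^s_{q,\infty}=\Lambda^q_s\) and \(b^s_{q,\infty}=\lambda^q_s\), which come from Peller's radial characterization with \(m=1\), and then applies Lemma~\ref{lem:little-besov-distance}.

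Your Steps~1--2 replace that citation with a direct, self-contained dyadic argument using only Lemma~\ref{lem:bv-multiplier} and Lemma~\ref{lem:dyadic-Bernstein}. This argument is correct.

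\begin{itemize}
\item In Step~1, the block multiplier \(\rho^n\mathbf 1_{I_j}(n)\) is monotone, with multiplier norm \(\lesssim\rho^{2^j}\le e^{-2^{j-N-1}}\) for \(\rho\le1-2^{-N-1}\). The extra factor \(\rho^{-1}\), coming from \(\frac{d}{dz}(\Delta_jF)(\rho z)=\rho(\Delta_jF)'(\rho z)\), is harmless.
\item In Step~2, the multiplier \(\rho_j^{-m}\mathbf 1_{E_j}(m)\) is monotone and bounded by \((1-2^{-j})^{-2^{j+1}}\le16\). This gives \(2^{js}\|\Delta_jF\|_{H^q}\lesssim(1-\rho_j)^{1-s}M_q(\rho_j,F')\) for \(j\ge1\), which yields the \(O\) and \(o\) statements at once.
\end{itemize}

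What your route buys is that you handle \(\lambda^q_s\) directly through its radial definition. Neither the polynomial closure \(b^s_{q,\infty}\) nor Lemma~\ref{lem:little-besov-distance} is needed. In effect, you prove the little-\(o\) clause of the radial characterization that the paper imports from Peller.

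Your argument also shows that only \(s<1\) matters. It is used for the geometric sum in Step~1 and for \((1-\rho)^{1-s}\|P'\|_{H^\infty}\to0\). Your remark that \(s>0\) is used in Step~2 is inaccurate: nothing there depends on the sign of \(s\).

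The paper's route is two lines, but it rests entirely on the external theorem.
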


\begin{proof}
The first assertion follows from
\(B^s_{q,\infty}=\Lambda_s^q\) and the definition of
\(B^s_{q,\infty}\). The second follows from
\(b^s_{q,\infty}=\lambda_s^q\) and
Lemma~\ref{lem:little-besov-distance}.
\end{proof}
\subsection{Analytic truncated Besov spaces}
\label{subsec:analytic-truncated-besov}

We introduce the following analytic dyadic realization of the truncated
Besov spaces introduced by Dom\'inguez and Tikhonov
\cite[Definition~3.1(i)]{DominguezTikhonov2024}.

\begin{definition}
\label{def:analytic-truncated-besov}
Let \(s\in\mathbb R\), \(b>0\), \(1<q<\infty\), and
\(q\le u\le\infty\). The analytic truncated Besov space
\(T_u^bB^s_{q,q}\) consists of all \(F\in\Hol(\D)\) such that
\[
        \|F\|_{T_u^bB^s_{q,q}}
        :=
        \left\|
        \left(
        2^{kb}
        \left(
        \sum_{j=2^k-1}^{2^{k+1}-2}
        2^{jsq}\|\Delta_jF\|_{H^q}^q
        \right)^{1/q}
        \right)_{k\ge0}
        \right\|_{\ell^u}
        <\infty.
\]
The little analytic truncated Besov space
\(t_u^bB^s_{q,q}\) is the closure of the analytic polynomials in
\(T_u^bB^s_{q,q}\).
\end{definition}

\begin{proposition}
\label{prop:truncated-besov-tail}
Let \(s\in\mathbb R\), \(b>0\), \(1<q<\infty\),
\(q\le u\le\infty\), and \(F\in\Hol(\D)\). Set
\[
        a_j=2^{js}\|\Delta_jF\|_{H^q},
        \qquad
        H_J=\left(\sum_{j\ge J}a_j^q\right)^{1/q}.
\]
Then the following assertions hold.

\begin{enumerate}[label=\textup{(\roman*)},leftmargin=2.2em]
\item If \(q\le u<\infty\), then
\[
        \|F\|_{T_u^bB^s_{q,q}}
        \asymp_{b,q,u}
        \left(
        \sum_{J\ge0}(J+1)^{bu-1}H_J^u
        \right)^{1/u}.
\]
Moreover,
\[
        t_u^bB^s_{q,q}=T_u^bB^s_{q,q}.
\]

\item If \(u=\infty\), then
\[
        \|F\|_{T_\infty^bB^s_{q,q}}
        \asymp_{b,q}
        \sup_{J\ge0}(J+1)^bH_J,
\]
and
\[
        t_\infty^bB^s_{q,q}
        =
        \left\{
        F\in T_\infty^bB^s_{q,q}:
        \lim_{J\to\infty}(J+1)^bH_J=0
        \right\}.
\]
If \(F\in T_\infty^bB^s_{q,q}\), then
\[
\operatorname{dist}_{T_\infty^bB^s_{q,q}}
\bigl(F,t_\infty^bB^s_{q,q}\bigr)
=
\limsup_{k\to\infty}
2^{kb}
\left(
\sum_{j=2^k-1}^{2^{k+1}-2}a_j^q
\right)^{1/q} 
\asymp_{b,q}
\limsup_{J\to\infty}(J+1)^bH_J.
\]
\end{enumerate}
\end{proposition}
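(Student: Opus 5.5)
The proof will reduce everything to a single comparison between the block quantities in Definition~\ref{def:analytic-truncated-besov} and the tails \(H_J\), via a geometric-kernel (discrete Hardy) argument. Let \(B_k=\{2^k-1,\ldots,2^{k+1}-2\}\), \(S_k=\sum_{j\in B_k}a_j^q\) and \(x_k=2^{kb}S_k^{1/q}\), so that \(\|F\|_{T_u^bB^s_{q,q}}=\|(x_k)_{k\ge0}\|_{\ell^u}\). The sets \(B_k\) partition \(\N_0\). Moreover, \(J\in B_k\) if and only if \(2^k\le J+1<2^{k+1}\), so \((J+1)^b\asymp_b2^{kb}\) on \(B_k\) and \(\#B_k=2^k\). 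Since \(B_k=[2^k-1,2^{k+1}-1)\), for \(J\in B_k\) we have the sandwich
\[
\sum_{m\ge k+1}S_m\le H_J^q\le\sum_{m\ge k}S_m .
\]

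\emph{Upper bound.} Using the sandwich and \(\#B_k=2^k\),
\[
\sum_{J\ge0}(J+1)^{bu-1}H_J^u\lesssim\sum_k2^{kbu}\Bigl(\sum_{m\ge k}S_m\Bigr)^{u/q}
=\sum_k\Bigl(\sum_{m\ge k}2^{-(m-k)bq}x_m^q\Bigr)^{u/q}.
\]
The inner expression is the convolution of \((x_m^q)\) with the summable kernel \((2^{-ibq})_{i\ge0}\), where \(b>0\). Since \(u/q\ge1\), Young's inequality on \(\ell^{u/q}\) bounds the right-hand side by \(C_{b,q,u}\sum_kx_k^u\). For \(u=\infty\) the same kernel argument gives
\[
(J+1)^bH_J\lesssim\sup_{m\ge k}x_m,\qquad J\in B_k .
\]

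\emph{Lower bound.} For \(J\in B_k\) we have \(H_J\ge S_{k+1}^{1/q}\), so the \(B_k\)-portion of the sum is \(\gtrsim2^{kbu}S_{k+1}^{u/q}\asymp x_{k+1}^u\). Summing over \(k\) controls \(\sum_{k\ge1}x_k^u\). The remaining term \(x_0=a_0\) is controlled by the \(J=0\) term, since \(H_0\ge a_0\). The \(u=\infty\) analogue is identical with sups in place of sums. This proves both norm equivalences.

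\emph{Little spaces.} Let \(F_N=\sum_{j<N}\Delta_jF\), a polynomial. Then \(\Delta_j(F-F_N)=\Delta_jF\) for \(j\ge N\) and vanishes otherwise. Take \(N=2^K-1\), so that \(\{j\ge N\}=\bigcup_{k\ge K}B_k\); the block quantities of \(F-F_N\) are then exactly \(x_k\) for \(k\ge K\) and \(0\) for \(k<K\). Hence
\[
\|F-F_N\|_{T_u^bB^s_{q,q}}=\|(x_k)_{k\ge K}\|_{\ell^u}.
\]
For \(u<\infty\) this tends to \(0\), so polynomials are dense and \(t_u^bB^s_{q,q}=T_u^bB^s_{q,q}\). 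For \(u=\infty\) it gives \(\operatorname{dist}\le\limsup_kx_k\).

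Conversely, let \(G\in t_\infty^bB^s_{q,q}\), and pick a polynomial \(P\) with \(\|G-P\|<\varepsilon\). Because \(x_k(P)=0\) for large \(k\), and the block quantity is a seminorm in \(F\) (Minkowski in \(\ell^q\), then in \(H^q\)), we get \(\limsup_kx_k(G)\le\varepsilon\), hence \(x_k(G)\to0\). Applying the same seminorm subadditivity to \(F=(F-G)+G\) then yields \(\limsup_kx_k(F)\le\|F-G\|\). This gives the exact distance formula, exactly as in Lemma~\ref{lem:little-besov-distance}.

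It remains to show \(\limsup_kx_k\asymp\limsup_J(J+1)^bH_J\). The lower bound \((J+1)^bH_J\gtrsim x_{k+1}\) for \(J\in B_k\) gives one direction. The bound \((J+1)^bH_J\lesssim\sup_{m\ge k}x_m\) gives the other. The description of \(t_\infty^bB^s_{q,q}\) as \(\{F:(J+1)^bH_J\to0\}\) then follows from distance \(=0\).

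I expect the only delicate point to be the index bookkeeping: the off-by-one shift \(B_k=[2^k-1,2^{k+1}-1)\), and the resulting mismatch between \(S_k\) and \(S_{k+1}\) in the lower bound. It is resolved by the sandwich above together with separate treatment of \(k=0\).
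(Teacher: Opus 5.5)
Your proof is correct and is essentially the paper's argument. Both use the same partition into blocks $B_k=\{2^k-1,\ldots,2^{k+1}-2\}$, Young's inequality with the geometric kernel $(2^{-ibq})_{i\ge0}$, the truncations $\sum_{j\le 2^K-2}\Delta_jF$, and the seminorm argument for the distance formula. The differences are cosmetic: the paper first condenses to $2^{kb}H_{2^k-1}$ via the exact identity $(2^{kb}H_{2^k-1})^q=\sum_{\ell\ge k}2^{-(\ell-k)bq}c_\ell^q$ rather than using your per-$J$ sandwich with the shifted lower bound $S_{k+1}$, and it characterizes $t_\infty^bB^s_{q,q}$ directly rather than through the distance formula and closedness.
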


\begin{proof}
Since \((H_J)\) is nonincreasing, dyadic grouping gives
\begin{equation}
\label{eq:truncated-condensation-sum}
\sum_{J\ge0}(J+1)^{bu-1}H_J^u
\asymp_{b,u}
\sum_{k\ge0}2^{kbu}H_{2^k-1}^u,
\qquad q\le u<\infty,
\end{equation}
and
\begin{equation}
\label{eq:truncated-condensation-sup}
\sup_{J\ge0}(J+1)^bH_J
\asymp_b
\sup_{k\ge0}2^{kb}H_{2^k-1}.
\end{equation}
Put
\(c_k=2^{kb}\left(\sum_{j=2^k-1}^{2^{k+1}-2}a_j^q\right)^{1/q}\).
Since the intervals
\(\{2^k-1,\ldots,2^{k+1}-2\}\), \(k\ge0\), partition
\(\mathbb N_0\),
\begin{equation}
\label{eq:truncated-tail-identity}
\bigl(2^{kb}H_{2^k-1}\bigr)^q
=
\sum_{\ell\ge k}
2^{-(\ell-k)bq}c_\ell^q.
\end{equation}

Suppose first that \(u<\infty\). Since
\((2^{-mbq})_{m\ge0}\in\ell^1\) and \(u/q\ge1\), Young's
inequality on \(\ell^{u/q}\), applied to
\eqref{eq:truncated-tail-identity}, together with the term
\(\ell=k\), gives
\[
\|(c_k)\|_{\ell^u}
\le
\bigl\|(2^{kb}H_{2^k-1})\bigr\|_{\ell^u}
\le
(1-2^{-bq})^{-1/q}\|(c_k)\|_{\ell^u}.
\]
Together with \eqref{eq:truncated-condensation-sum}, this proves the
norm equivalence in \textup{(i)}.

For \(u=\infty\), \eqref{eq:truncated-tail-identity} gives
\begin{equation}
\label{eq:truncated-sup-tail}
c_k
\le
2^{kb}H_{2^k-1}
\le
(1-2^{-bq})^{-1/q}\sup_{\ell\ge k}c_\ell.
\end{equation}
Together with \eqref{eq:truncated-condensation-sup}, this proves the
norm equivalence in \textup{(ii)} and shows that
\[
\lim_{J\to\infty}(J+1)^bH_J=0
\quad\Longleftrightarrow\quad
\lim_{k\to\infty}c_k=0.
\]

For \(K\ge1\), set
\(P_KF=\sum_{j=0}^{2^K-2}\Delta_jF\).
Then \(P_KF\) is an analytic polynomial and
\begin{equation}
\label{eq:truncated-polynomial-tail}
\|F-P_KF\|_{T_u^bB^s_{q,q}}
=
\|(c_k)_{k\ge K}\|_{\ell^u}.
\end{equation}
Hence analytic polynomials are dense in \(T_u^bB^s_{q,q}\) when
\(u<\infty\). If \(u=\infty\) and \(c_k\to0\),
\eqref{eq:truncated-polynomial-tail} gives
\(P_KF\to F\) in \(T_\infty^bB^s_{q,q}\).

Conversely, suppose that \(F\in t_\infty^bB^s_{q,q}\). Given
\(\varepsilon>0\), choose an analytic polynomial \(P\) such that
\(\|F-P\|_{T_\infty^bB^s_{q,q}}<\varepsilon\).
For all sufficiently large \(k\), we have \(c_k(P)=0\), and hence
\[
c_k=c_k(F-P)
\le
\|F-P\|_{T_\infty^bB^s_{q,q}}
<\varepsilon.
\]
Thus \(c_k\to0\), proving the characterization of
\(t_\infty^bB^s_{q,q}\).

It remains to prove the distance formula. Since
\(P_KF\in t_\infty^bB^s_{q,q}\), \eqref{eq:truncated-polynomial-tail}
gives
\[
\begin{aligned}
\operatorname{dist}_{T_\infty^bB^s_{q,q}}
\bigl(F,t_\infty^bB^s_{q,q}\bigr)
&\le
\lim_{K\to\infty}
\|F-P_KF\|_{T_\infty^bB^s_{q,q}} \\
&=
\lim_{K\to\infty}\sup_{k\ge K}c_k
=
\limsup_{k\to\infty}c_k.
\end{aligned}
\]
Conversely, let \(G\in t_\infty^bB^s_{q,q}\), and denote its block
quantities by \(c_k(G)\). Since \(c_k(G)\to0\),
\[
c_k
\le
c_k(F-G)+c_k(G)
\le
\|F-G\|_{T_\infty^bB^s_{q,q}}+c_k(G).
\]
Therefore
\[
\limsup_{k\to\infty}c_k
\le
\|F-G\|_{T_\infty^bB^s_{q,q}}.
\]
Taking the infimum over \(G\in t_\infty^bB^s_{q,q}\) gives
\begin{equation}
\label{eq:truncated-distance-ck}
\operatorname{dist}_{T_\infty^bB^s_{q,q}}
\bigl(F,t_\infty^bB^s_{q,q}\bigr)
=
\limsup_{k\to\infty}c_k.
\end{equation}

Finally, the same dyadic grouping as in
\eqref{eq:truncated-condensation-sup} gives, for every \(K\ge0\),
\[
\sup_{J\ge2^K-1}(J+1)^bH_J
\asymp_b
\sup_{k\ge K}2^{kb}H_{2^k-1}.
\]
Hence
\[
\limsup_{J\to\infty}(J+1)^bH_J
\asymp_b
\limsup_{k\to\infty}2^{kb}H_{2^k-1}.
\]
By \eqref{eq:truncated-sup-tail},
\[
\limsup_{k\to\infty}c_k
\asymp_{b,q}
\limsup_{k\to\infty}2^{kb}H_{2^k-1}.
\]
Combining this with \eqref{eq:truncated-distance-ck} proves the
distance formula in \textup{(ii)} and completes the proof.
\end{proof}

\begin{corollary}
\label{cor:critical-truncated-besov-norms}
Let \(1<p,q<\infty\), \(\beta>-1\), and put
\(s=1-(\beta+1)/q\). Then the following assertions hold.

\begin{enumerate}[label=\textup{(\roman*)},leftmargin=2.2em]
\item If \(q<p\) and \(1/r=1/q-1/p\), then
\[
        G_{p,q,\beta}(\eta)^{1/q}
        \asymp_{p,q}
        \|F_\eta\|_{T_r^{1/p'}B^s_{q,q}}.
\]

\item If \(p\le q\), then
\[
        \sup_{J\ge0}(J+1)^{1/p'}W_J(\eta)^{1/q}
        \asymp_{p,q}
        \|F_\eta\|_{T_\infty^{1/p'}B^s_{q,q}}.
\]
Moreover,
\[
        \lim_{J\to\infty}(J+1)^{1/p'}W_J(\eta)^{1/q}=0
        \quad\Longleftrightarrow\quad
        F_\eta\in t_\infty^{1/p'}B^s_{q,q}.
\]
If \(F_\eta\in T_\infty^{1/p'}B^s_{q,q}\), then
\[
\begin{aligned}
\operatorname{dist}_{T_\infty^{1/p'}B^s_{q,q}}
\bigl(F_\eta,t_\infty^{1/p'}B^s_{q,q}\bigr)
&=
\limsup_{k\to\infty}
2^{k/p'}
\left(
\sum_{j=2^k-1}^{2^{k+1}-2}
2^{jsq}\|\Delta_jF_\eta\|_{H^q}^q
\right)^{1/q} \\
&\asymp_{p,q}
\limsup_{J\to\infty}
(J+1)^{1/p'}W_J(\eta)^{1/q}.
\end{aligned}
\]
\end{enumerate}
\end{corollary}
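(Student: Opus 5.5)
This is a direct specialization of Proposition~\ref{prop:truncated-besov-tail} with \(F=F_\eta\), \(b=1/p'\) and \(s=1-(\beta+1)/q\). In that notation \(a_j=2^{js}\|\Delta_jF_\eta\|_{H^q}\), and \(2^{jsq}=2^{j(q-\beta-1)}\). Hence
\[
H_J^q=\sum_{j\ge J}2^{j(q-\beta-1)}\|\Delta_jF_\eta\|_{H^q}^q=W_J(\eta),
\]
that is, \(H_J=W_J(\eta)^{1/q}\) for every \(J\ge0\). The parameter requirements of the proposition hold: \(b=1/p'>0\) because \(p>1\), and \(1<q<\infty\). So the whole proof amounts to matching exponents.

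For \textup{(i)}, let \(q<p\) and \(1/r=1/q-1/p\). Then \(r>q\), in particular \(q\le r<\infty\), so part \textup{(i)} of Proposition~\ref{prop:truncated-besov-tail} applies with \(u=r\). It gives
\[
\|F_\eta\|_{T_r^{1/p'}B^s_{q,q}}\asymp\Bigl(\sum_{J\ge0}(J+1)^{r/p'-1}W_J(\eta)^{r/q}\Bigr)^{1/r}.
\]
Compare the power of \(J+1\) with the one in \(G_{p,q,\beta}\). We have \(r/p'-1=r(1-1/p)-1\), while \(r/q'=r(1-1/q)\). Their difference is
\[
r\bigl(1/q-1/p\bigr)-1=1-1=0,
\]
so the two exponents agree. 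Therefore the sum equals \(G_{p,q,\beta}(\eta)^{r/q}\). Raising to the power \(1/r\) gives \(G_{p,q,\beta}(\eta)^{1/q}\), which proves \textup{(i)}.

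For \textup{(ii)}, apply part \textup{(ii)} of Proposition~\ref{prop:truncated-besov-tail} with \(u=\infty\) and \(H_J=W_J(\eta)^{1/q}\). This yields the norm equivalence, the characterization of \(t_\infty^{1/p'}B^s_{q,q}\), and the distance formula, word for word. The only point needing care is the constants. The proposition gives constants depending on \(b\) and \(q\), and here \(b=1/p'\), so they depend only on \(p,q\). This is consistent with the subscript \(\asymp_{p,q}\) in the statement. No step is a genuine obstacle; the one thing to verify is the exponent identity \(r/p'-1=r/q'\).
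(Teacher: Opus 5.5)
Your proposal is correct and follows essentially the same route as the paper's proof. You identify \(H_J=W_J(\eta)^{1/q}\), apply Proposition~\ref{prop:truncated-besov-tail} with \(b=1/p'\) and \(u=r\) (using \(r>q\) and \(r/p'-1=r/q'\)) or \(u=\infty\), and note that the constants depend only on \(p,q\).
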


\begin{proof}
For \(F_\eta\), the tails in
Proposition~\ref{prop:truncated-besov-tail} satisfy
\(H_J=W_J(\eta)^{1/q}\). If \(q<p\) and \(1/r=1/q-1/p\), then
\(r>q\) and \(r/p'-1=r/q'\). Thus \textup{(i)} follows from
Proposition~\ref{prop:truncated-besov-tail} with \(b=1/p'\) and
\(u=r\). Taking \(b=1/p'\) and \(u=\infty\) gives
\textup{(ii)}, the characterization of
\(t_\infty^{1/p'}B^s_{q,q}\), and the distance formula.
\end{proof}

\subsection{Discrete Hardy inequalities and local triangular estimates}

For \(N\ge1\), let
\(D_N(e^{it})=\sum_{n=0}^{N-1}e^{int}\) and
\(K_N(t)=\min\{N,|t|^{-1}\}\) on \((-\pi,\pi]\), with \(K_N(0)=N\).
For \(t\ne0\),
\(D_N(e^{it})=e^{i(N-1)t/2}\sin(Nt/2)/\sin(t/2)\), and hence
\[
        |D_N(e^{it})|
        \lesssim
        \min\{N,|t|^{-1}\}
        =
        K_N(t).
\]
Moreover,
\(\|K_N\|_{L^{p'}(\T)}\lesssim_p N^{1/p}\), and therefore
\(\|D_N\|_{L^{p'}(\T)}\lesssim_p N^{1/p}\).

\begin{lemma}
\label{lem:block-summation-estimate}
Let \(1<p<\infty\), let \(I\subset\mathbb N_0\) be a finite interval,
and let \(u(z)=\sum_{n\in I}c_nz^n\).  Then
\[
        \left|\sum_{n\in I}c_n\right|
        \le
        C_p |I|^{1/p}\|u\|_{H^p}.
\]
In particular,
\(\left|\sum_{n\in I_j}c_n\right|\le C_p2^{j/p}\|u\|_{H^p}\).
\end{lemma}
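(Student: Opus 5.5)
The plan is to write the coefficient sum as a pairing of \(u\) with a Dirichlet-type kernel on the circle and then apply Hölder's inequality. Let \(m=\min I\) and \(N=|I|\). Then
\[
\sum_{n\in I}c_n=\sum_{n\in I}\widehat u(n)
=\frac1{2\pi}\int_{-\pi}^{\pi}u(e^{it})\,\overline{e^{imt}D_N(e^{it})}\,dt,
\]
because \(e^{imt}D_N(e^{it})=\sum_{n\in I}e^{int}\) and \(u\) is a polynomial. This is just Parseval's identity for trigonometric polynomials, since both functions have coefficients only on \(I\).

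Next, Hölder's inequality with exponents \(p\) and \(p'\) gives
\[
\left|\sum_{n\in I}c_n\right|\le\|u\|_{L^p(\T)}\,\|D_N\|_{L^{p'}(\T)}.
\]
The factor \(e^{imt}\) has modulus one, so it does not change the \(L^{p'}\) norm. Also \(\|u\|_{L^p(\T)}=\|u\|_{H^p}\), because \(u\) is a polynomial.

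It remains to use the bound \(\|D_N\|_{L^{p'}(\T)}\lesssim_pN^{1/p}\) recorded just before the lemma. For completeness, this follows from \(|D_N|\lesssim K_N\) and the estimate
\[
\int_{-\pi}^{\pi}K_N^{p'}\,dt\lesssim\int_{|t|<1/N}N^{p'}\,dt+\int_{|t|>1/N}|t|^{-p'}\,dt\lesssim N^{p'-1},
\]
where the second integral converges because \(p'>1\). Since \((p'-1)/p'=1/p\), taking \(p'\)-th roots gives \(N^{1/p}\). This proves the lemma with \(C_p\) independent of \(I\).

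For the particular case, note that \(|I_j|=2^j\) for \(j\ge1\) and \(|I_0|=2\le2\cdot2^0\). In both cases \(|I_j|^{1/p}\lesssim2^{j/p}\), so \(\left|\sum_{n\in I_j}c_n\right|\le C_p2^{j/p}\|u\|_{H^p}\) after adjusting \(C_p\).

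There is no real obstacle here. The only points needing care are the translation by \(m\), which is harmless because \(|e^{imt}|=1\), and the convergence of the tail integral, which uses \(p'>1\), that is, \(p<\infty\).
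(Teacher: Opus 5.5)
Your proposal is correct and follows the paper's proof: it pairs $u$ with the (shifted) Dirichlet kernel $D_N$, applies H\"older's inequality, and uses $\|D_N\|_{L^{p'}(\T)}\lesssim_p N^{1/p}$. Your kernel $e^{imt}D_N$ is equivalent to the paper's factoring out of $z^M$, and you verify the kernel bound and the $j=0$ constant, which the paper only asserts.
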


\begin{proof}
Write \(I=\{M,\ldots,M+N-1\}\), where \(N=|I|\), and factor out
\(z^M\). For \(I=\{0,\ldots,N-1\}\),
\[
        \sum_{n=0}^{N-1}c_n
        =
        \int_{\T}u\overline{D_N}\,dm.
\]
H\"older's inequality and
\(\|D_N\|_{L^{p'}}\lesssim_pN^{1/p}\) give the result.
\end{proof}

\begin{lemma}
\label{lem:normalized-interval-averages}
Let \(1<p<\infty\), let \(I\subset\mathbb N_0\) be a finite interval,
and put \(N=|I|\). If
\(u_I=N^{-1}\sum_{k\in I}z^k\), then
\[
        \|u_I\|_{H^p}^p\le C_pN^{-1}.
\]
\end{lemma}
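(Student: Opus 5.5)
We need \(\|u_I\|_{H^p}^p\lesssim_p N^{-1}\). As in the proof of Lemma~\ref{lem:block-summation-estimate}, we first factor out the monomial. Write \(I=\{M,\ldots,M+N-1\}\). On \(\T\),
\[
u_I(e^{it})=N^{-1}e^{iMt}D_N(e^{it}),
\]
and \(|e^{iMt}|=1\). Hence \(\|u_I\|_{H^p}=N^{-1}\|D_N\|_{L^p(\T)}\), so it suffices to show \(\|D_N\|_{L^p(\T)}^p\lesssim_p N^{p-1}\).

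For this we use the pointwise bound \(|D_N(e^{it})|\lesssim K_N(t)=\min\{N,|t|^{-1}\}\) recorded before Lemma~\ref{lem:block-summation-estimate}. We then split the integral over \((-\pi,\pi]\) at \(|t|=1/N\):
\[
\int_{|t|\le1/N}N^p\,\frac{dt}{2\pi}\lesssim N^{p-1},
\qquad
\int_{1/N<|t|\le\pi}|t|^{-p}\,\frac{dt}{2\pi}\lesssim_p N^{p-1}.
\]
The second bound uses \(p>1\), so that \(\int_{1/N}^\infty t^{-p}\,dt=N^{p-1}/(p-1)\). This is the same computation that gives \(\|K_N\|_{L^{p'}}\lesssim N^{1/p}\), with \(p\) in place of \(p'\).

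Combining the two pieces gives \(\|D_N\|_{L^p}^p\lesssim_p N^{p-1}\), and therefore
\[
\|u_I\|_{H^p}^p=N^{-p}\|D_N\|_{L^p}^p\lesssim_p N^{-1}.
\]
There is no real obstacle here. The only point needing care is that the constant depends on \(p\) through the factor \((p-1)^{-1}\), which is finite precisely because \(p>1\).
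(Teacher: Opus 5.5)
Your proposal is correct and follows the paper's proof: both write \(u_I=z^MN^{-1}D_N\) and reduce the claim to \(\|D_N\|_{H^p}\lesssim_pN^{1-1/p}\). The paper cites that Dirichlet kernel bound as standard, while you prove it by splitting at \(|t|=1/N\) and using \(|D_N|\lesssim K_N\), which is a valid verification.
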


\begin{proof}
Writing \(I=\{M,\ldots,M+N-1\}\), we have
\(u_I=z^MN^{-1}D_N\). The standard Dirichlet kernel estimate
\(\|D_N\|_{H^p}\lesssim_pN^{1-1/p}\) gives
\(\|u_I\|_{H^p}\lesssim_pN^{-1/p}\).
\end{proof}

Let \(I\subset\mathbb N_0\) be a finite interval. For analytic
polynomials
\(u(z)=\sum_{k\in I}a_kz^k\) and
\(v(z)=\sum_{n\in I}b_nz^n\), define
\[
        \mathcal T_I(u,v)(z)
        =
        \sum_{n\in I}
        \left(
        \sum_{\substack{k\in I\\ k\le n}}a_k
        \right)b_nz^n.
\]

\begin{lemma}
\label{lem:local-triangular-estimate}
Let \(1<p,q<\infty\). Then, for every finite interval
\(I\subset\mathbb N_0\) and all analytic polynomials \(u,v\) whose
Taylor supports are contained in \(I\),
\[
\|\mathcal T_I(u,v)\|_{H^q}
\le
C_{p,q}|I|^{1/p}\|u\|_{H^p}\|v\|_{H^q}.
\]
\end{lemma}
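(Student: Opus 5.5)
We reduce to the interval \(I=\{0,\ldots,N-1\}\), \(N=|I|\): writing \(I=\{M,\ldots,M+N-1\}\), we factor \(z^M\) out of \(u\) and \(v\). This changes neither the partial sums \(\sum_{k\le n}a_k\) nor the \(H^p\), \(H^q\) norms, and \(\mathcal T_I(u,v)\) simply acquires the factor \(z^M\). The plan is to express the partial sums through the Dirichlet kernel. For \(n\in I\),
\[
S_n:=\sum_{k=0}^{n}a_k=\int_{\T}u(\zeta)\,\overline{D_{n+1}(\zeta)}\,dm(\zeta),
\qquad
D_{n+1}(\zeta)=\sum_{k=0}^{n}\zeta^k .
\]
The tempting route is not to use Lemma~\ref{lem:block-summation-estimate} pointwise, since that only gives \(|S_n|\lesssim N^{1/p}\|u\|_{H^p}\) and loses the \(H^q\) structure of \(v\) once we pass to \(\|\sum S_nb_nz^n\|_{H^q}\): a bounded sequence \((S_n)\) need not be an \(H^q\) multiplier. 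Instead we represent \(\mathcal T_I(u,v)\) as an average of multipliers of bounded variation.

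Concretely, \(\overline{D_{n+1}(\zeta)}=\sum_{k\le n}\bar\zeta^k\), so
\[
\mathcal T_I(u,v)(z)=\int_{\T}u(\zeta)\sum_{n\in I}\Bigl(\sum_{k=0}^{n}\bar\zeta^{k}\Bigr)b_nz^n\,dm(\zeta)=\int_{\T}u(\zeta)\,(T_{\lambda^\zeta}v)(z)\,dm(\zeta),
\]
where \(\lambda^\zeta_n=\overline{D_{n+1}(\zeta)}\mathbf 1_I(n)\). By Minkowski's integral inequality,
\[
\|\mathcal T_I(u,v)\|_{H^q}\le\int_{\T}|u(\zeta)|\,\|T_{\lambda^\zeta}v\|_{H^q}\,dm(\zeta).
\]
To estimate \(T_{\lambda^\zeta}v\), we write \(\zeta=e^{it}\) and
\[
\overline{D_{n+1}(e^{it})}=\frac{1-e^{-i(n+1)t}}{1-e^{-it}} .
\]
Hence \(T_{\lambda^\zeta}v=(1-e^{-it})^{-1}\bigl(v-e^{-it}v(e^{-it}z)\bigr)\), which gives \(\|T_{\lambda^\zeta}v\|_{H^q}\le 2|1-e^{-it}|^{-1}\|v\|_{H^q}\lesssim|t|^{-1}\|v\|_{H^q}\). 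Alternatively, we can pass through Lemma~\ref{lem:bv-multiplier}, but rotation invariance of \(H^q\) is cleaner. On the other hand, \(|\lambda^\zeta_n|\le N\) and the variation of \(n\mapsto\lambda^\zeta_n\) on \(I\) is at most \(\sum_n|\zeta|^{n+1}\le N\), so Lemma~\ref{lem:bv-multiplier} (with the jumps at the endpoints of \(I\) also bounded by \(N\)) gives \(\|T_{\lambda^\zeta}v\|_{H^q}\lesssim_q N\|v\|_{H^q}\). Combining the two bounds,
\[
\|T_{\lambda^\zeta}v\|_{H^q}\lesssim_q K_N(t)\,\|v\|_{H^q}.
\]

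Finally, by Hölder's inequality and \(\|K_N\|_{L^{p'}(\T)}\lesssim_pN^{1/p}\),
\[
\|\mathcal T_I(u,v)\|_{H^q}\lesssim_q\|v\|_{H^q}\int_{\T}|u|\,K_N\,dm\le\|v\|_{H^q}\|u\|_{L^p}\|K_N\|_{L^{p'}}\lesssim_{p,q}N^{1/p}\|u\|_{H^p}\|v\|_{H^q}.
\]
The only delicate point is the uniform-in-\(\zeta\) multiplier bound \(K_N(t)\). The closed form of the Dirichlet kernel, combined with rotation invariance for large \(|t|\) and the bounded variation bound for small \(|t|\), handles it. Everything else is Minkowski's and Hölder's inequalities.
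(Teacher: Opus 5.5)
Your proposal is correct and follows essentially the same route as the paper's proof. Both represent the partial sums through the Dirichlet kernel and bound the resulting multipliers uniformly by \(K_N(s)\): the closed form of \(D_{n+1}\) with rotation invariance gives the \(|s|^{-1}\) bound, and Lemma~\ref{lem:bv-multiplier} gives the \(N\) bound. Both then conclude with Minkowski's and H\"older's inequalities using \(\|K_N\|_{L^{p'}(\T)}\lesssim_p N^{1/p}\).
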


\begin{proof}
Let \(I=\{L,\ldots,L+N-1\}\). By factoring out \(z^L\) from
\(u\) and \(v\), and using that multiplication by a monomial is
isometric on every \(H^r\), it suffices to consider
\(I=\{0,\ldots,N-1\}\). Write
\[
A_n=\sum_{k=0}^n a_k,
\qquad
D_{n+1}(e^{-is})=\sum_{k=0}^n e^{-iks}.
\]
Orthogonality gives
\[
A_n
=
\int_{\mathbb T}
u(e^{is})D_{n+1}(e^{-is})\,dm(s).
\]
Using this representation and interchanging the finite sum with the
integral, we obtain
\[
\begin{aligned}
\mathcal T_I(u,v)(e^{it})
&=
\sum_{n=0}^{N-1}A_nb_ne^{int} \\
&=
\int_{\mathbb T}u(e^{is})
\left(
\sum_{n=0}^{N-1}
D_{n+1}(e^{-is})b_ne^{int}
\right)\,dm(s).
\end{aligned}
\]
Thus
\[
\mathcal T_I(u,v)(e^{it})
=
\int_{\mathbb T}u(e^{is})V_sv(e^{it})\,dm(s),
\]
where
\[
V_sv(e^{it})
=
\sum_{n=0}^{N-1}D_{n+1}(e^{-is})b_ne^{int}.
\]
We claim that
\[
\|V_sv\|_{H^q}
\le
C_qK_N(s)\|v\|_{H^q}.
\]
Indeed, for \(0<|s|\le\pi\), since
\[
D_{n+1}(e^{-is})
=
\frac{1-e^{-i(n+1)s}}{1-e^{-is}},
\]
we have
\[
V_sv(e^{it})
=
\frac{v(e^{it})-e^{-is}v(e^{i(t-s)})}
     {1-e^{-is}}.
\]
Hence, since
\[
|1-e^{-is}|=2|\sin(s/2)|\gtrsim |s|,
\]
it follows that
\[
\|V_sv\|_{H^q}
\lesssim
|s|^{-1}\|v\|_{H^q}.
\]
On the other hand, \(V_s\) is the coefficient multiplier associated
with
\[
\lambda_n^{(s,N)}
=
\begin{cases}
D_{n+1}(e^{-is}),&0\le n\le N-1,\\
0,&n\ge N.
\end{cases}
\]
Since
\[
\sup_n|\lambda_n^{(s,N)}|\le N,
\qquad
|\lambda_{n+1}^{(s,N)}-\lambda_n^{(s,N)}|=1
\quad (0\le n\le N-2),
\]
while
\[
|\lambda_N^{(s,N)}-\lambda_{N-1}^{(s,N)}|
=
|D_N(e^{-is})|
\le N,
\]
it follows that
\[
\sup_n|\lambda_n^{(s,N)}|
+
\sum_{n=0}^{\infty}
|\lambda_{n+1}^{(s,N)}-\lambda_n^{(s,N)}|
\lesssim N.
\]
Lemma~\ref{lem:bv-multiplier}, applied with exponent \(q\), gives
\[
\|V_sv\|_{H^q}\le C_qN\|v\|_{H^q}.
\]
The claim follows.

By Minkowski's integral inequality, H\"older's inequality, and the
estimate for \(K_N\),
\[
\begin{aligned}
\|\mathcal T_I(u,v)\|_{H^q}
&\le
\int_{\mathbb T}|u(e^{is})|\|V_sv\|_{H^q}\,dm(s) \\
&\le
C_q\|v\|_{H^q}
\int_{\mathbb T}|u(e^{is})|K_N(s)\,dm(s) \\
&\le
C_q\|u\|_{H^p}\|v\|_{H^q}
\|K_N\|_{L^{p'}(\mathbb T)} \\
&\lesssim_{p,q}
N^{1/p}\|u\|_{H^p}\|v\|_{H^q}.
\end{aligned}
\]
Since \(N=|I|\), the proof is complete.
\end{proof}

We shall use the following form of the discrete weighted Hardy inequality
for the Hardy operator \(Hx(j)=\sum_{m=0}^{j}x_m\). It is a special
case of \cite[Theorem~1.1]{OkpotiPerssonWedestig2006}, after an index
shift. We include a proof for convenience.

\begin{lemma}
\label{lem:tail-Hardy}
Let \(1<p<\infty\), let \(w_j\ge0\), and suppose that
\[
        A
        :=
        \sup_{J\ge0}
        (J+1)^{p-1}\sum_{j\ge J}w_j
        <\infty.
\]
Then, for every nonnegative sequence \(x=(x_j)_{j\ge0}\),
\[
        \sum_{j\ge0}
        w_j\left(\sum_{m\le j}x_m\right)^p
        \le
        C_p A\sum_{m\ge0}x_m^p.
\]
\end{lemma}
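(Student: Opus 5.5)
We prove Lemma~\ref{lem:tail-Hardy} by the classical weighted-test-function (Schur-type) argument for Hardy operators, with auxiliary weight \((m+1)^{-1/p'}\) or a close variant. Set \(S_j=\sum_{m\le j}x_m\). The hypothesis gives the tail bound \(\sum_{j\ge J}w_j\le A(J+1)^{1-p}\), and this is the only information on \((w_j)\) we shall use.

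\emph{Step 1: Hölder with a weight.} Fix \(\gamma\in(0,1/p')\), say \(\gamma=1/(2p')\), and write \(x_m=x_m(m+1)^{\gamma}\cdot(m+1)^{-\gamma}\). Hölder's inequality gives
\[
S_j^p\le\left(\sum_{m\le j}x_m^p(m+1)^{\gamma p}\right)\left(\sum_{m\le j}(m+1)^{-\gamma p'}\right)^{p-1}.
\]
Since \(\gamma p'<1\), the second factor is bounded by \(C(j+1)^{(1-\gamma p')(p-1)}=C(j+1)^{p-1-\gamma p}\).

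\emph{Step 2: interchange the sums.} This yields
\[
\sum_j w_jS_j^p\le C\sum_m x_m^p(m+1)^{\gamma p}\sum_{j\ge m}w_j(j+1)^{p-1-\gamma p}.
\]
It therefore suffices to prove
\[
\sum_{j\ge m}w_j(j+1)^{\theta}\le C_pA(m+1)^{\gamma p},\qquad \theta:=p-1-\gamma p>0.
\]

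\emph{Step 3: the weighted tail sum.} This is the main obstacle, since we control only the tails \(T_J=\sum_{j\ge J}w_j\) and \(\theta>0\). We handle it by Abel summation, or equivalently by splitting into dyadic ranges \(j\in[2^k(m+1)-1,2^{k+1}(m+1)-1)\). On such a range \((j+1)^\theta\le (2^{k+1}(m+1))^\theta\), and the mass of \(w\) there is at most \(T_{2^k(m+1)-1}\le A(2^k(m+1))^{1-p}\). The contribution of range \(k\) is thus at most \(C A\,(2^k(m+1))^{\theta+1-p}=CA\,(2^k(m+1))^{-\gamma p}\). Because \(\gamma p>0\), summing the geometric series over \(k\ge0\) gives \(CA(m+1)^{-\gamma p}\).

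This is even stronger than needed, since \((m+1)^{-\gamma p}\le (m+1)^{\gamma p}\), but it shows the weight exponents should be balanced. The correct bookkeeping is to keep the factor \((m+1)^{-\gamma p}\) from Step 3 and multiply it by \((m+1)^{\gamma p}\) from Step 2, so the product is \(O(A)\) uniformly in \(m\). Summing over \(m\) then gives \(\sum_j w_jS_j^p\le C_pA\sum_m x_m^p\).

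We must check that the exponents close exactly: \(\theta+1-p=-\gamma p\), which holds by the definition of \(\theta\). The constant depends only on \(p\) through the choice of \(\gamma\) and the geometric sums.
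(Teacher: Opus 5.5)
Your argument is correct, but it takes a different route from the paper's.

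\textbf{The paper's route.} The paper uses Abel summation to rewrite $\sum_j w_jS_j^p$ as $\sum_j W_j(S_j^p-S_{j-1}^p)$ and bounds $S_j^p-S_{j-1}^p\le p\,x_jS_j^{p-1}$. It then inserts the pointwise bound $W_j\le A(j+1)^{1-p}$. It closes with H\"older's inequality and the classical unweighted discrete Hardy inequality for the averages $S_j/(j+1)$. This needs $x$ finitely supported at first, so that the boundary term $W_{L+1}S_L^p$ vanishes, followed by a monotone convergence step.

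\textbf{Your route.} You run the standard power-weight H\"older (Schur test) argument from the sufficiency proof of Muckenhoupt's criterion:
\begin{itemize}
\item H\"older with weights $(m+1)^{\pm\gamma}$;
\item Tonelli to interchange the sums;
\item a dyadic decomposition that converts the tail hypothesis into the weighted tail bound $\sum_{j\ge m}w_j(j+1)^\theta\le C_pA(m+1)^{-\gamma p}$.
\end{itemize}
Your argument is self-contained, since it does not invoke the classical Hardy inequality. It also works directly for arbitrary nonnegative sequences, with no truncation. The paper's proof is shorter once the classical Hardy inequality is taken as known.

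\textbf{One correction to Step 2.} At the end of Step 2 the sufficient condition should read $\sum_{j\ge m}w_j(j+1)^\theta\le C_pA(m+1)^{-\gamma p}$, not $C_pA(m+1)^{\gamma p}$. A bound with the positive exponent would not suffice: combined with the factor $(m+1)^{\gamma p}$ from Step 1, it leaves an unbounded factor $(m+1)^{2\gamma p}$. So your remark that Step 3 is ``even stronger than needed'' is misleading. Step 3 proves exactly the bound that is needed, and your final bookkeeping uses it correctly, so the proof closes.
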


\begin{proof}
We first assume that \(x\) is finitely supported. Put
\(S_j=\sum_{m=0}^j x_m\) and \(W_J=\sum_{j\ge J}w_j\). Then
\(W_J\le A(J+1)^{1-p}\) and \(w_j=W_j-W_{j+1}\).

For \(L\ge0\), Abel summation gives
\[
        \sum_{j=0}^{L}w_jS_j^p
        =
        \sum_{j=0}^{L}W_j(S_j^p-S_{j-1}^p)
        -
        W_{L+1}S_L^p,
        \qquad S_{-1}=0.
\]
Since \((S_L)\) is bounded and \(W_{L+1}\to0\), letting
\(L\to\infty\) gives
\[
        \sum_{j\ge0}w_jS_j^p
        =
        \sum_{j\ge0}W_j(S_j^p-S_{j-1}^p).
\]
Since \(0\le S_{j-1}\le S_j\),
\[
        S_j^p-S_{j-1}^p
        \le
        p(S_j-S_{j-1})S_j^{p-1}
        =
        px_jS_j^{p-1}.
\]
Hence
\begin{equation}
\label{eq:tail-Hardy-main}
        \sum_{j\ge0}w_jS_j^p
        \le
        C_pA
        \sum_{j\ge0}
        x_j\left(\frac{S_j}{j+1}\right)^{p-1}.
\end{equation}
By Hölder's inequality,
\[
        \sum_{j\ge0}
        x_j\left(\frac{S_j}{j+1}\right)^{p-1}
        \le
        \left(\sum_{j\ge0}x_j^p\right)^{1/p}
        \left(
        \sum_{j\ge0}
        \left(\frac{S_j}{j+1}\right)^p
        \right)^{1/p'}.
\]
The classical discrete Hardy inequality gives
\begin{equation}
\label{eq:tail-Hardy-discrete}
        \sum_{j\ge0}
        \left(\frac{S_j}{j+1}\right)^p
        \le
        C_p\sum_{j\ge0}x_j^p.
\end{equation}
Combining \eqref{eq:tail-Hardy-main} and
\eqref{eq:tail-Hardy-discrete} proves the result for finitely
supported sequences.

For a general nonnegative sequence \(x\), apply the estimate to
\(x^{(N)}=(x_0,\ldots,x_N,0,\ldots)\) and let \(N\to\infty\).
The result follows by monotone convergence.
\end{proof}

\begin{lemma}
\label{lem:mixed-tail-Hardy}
Let \(1<p\le q<\infty\), let \(w_j\ge0\), and suppose that
\[
        A
        :=
        \sup_{J\ge0}
        (J+1)^{q/p'}\sum_{j\ge J}w_j
        <\infty.
\]
Then, for every nonnegative sequence \(x=(x_j)_{j\ge0}\),
\[
        \sum_{j\ge0}
        w_j\left(\sum_{m\le j}x_m\right)^q
        \le
        C_{p,q}A
        \left(\sum_{m\ge0}x_m^p\right)^{q/p}.
\]
\end{lemma}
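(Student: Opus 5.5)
We will run the proof of Lemma~\ref{lem:tail-Hardy} again, changing only the final Hölder step so that it mixes the exponents \(p\) and \(q\). As there, we may assume \(x\) is finitely supported and remove this at the end by monotone convergence. Put \(S_j=\sum_{m\le j}x_m\) and \(W_J=\sum_{j\ge J}w_j\le A(J+1)^{-q/p'}\). Abel summation gives
\[
\sum_{j\ge0}w_jS_j^q=\sum_{j\ge0}W_j(S_j^q-S_{j-1}^q)\le q\sum_{j\ge0}W_jx_jS_j^{q-1},
\]
and therefore
\[
\sum_{j\ge0}w_jS_j^q\le qA\sum_{j\ge0}x_j(j+1)^{-q/p'}S_j^{q-1}.
\]

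To bound the right-hand side, let \(\|x\|_p=(\sum x_m^p)^{1/p}\). By Hölder, \(S_j\le (j+1)^{1/p'}\|x\|_p\). We split \(S_j^{q-1}=S_j^{q-p}S_j^{p-1}\) and apply this pointwise bound only to the factor \(S_j^{q-p}\); this is legitimate because \(q\ge p\). The result is
\[
x_j(j+1)^{-q/p'}S_j^{q-1}\le \|x\|_p^{q-p}\,x_j\,(j+1)^{-q/p'+(q-p)/p'}S_j^{p-1}
=\|x\|_p^{q-p}\,x_j\left(\frac{S_j}{j+1}\right)^{p-1},
\]
since \(-q/p'+(q-p)/p'=-p/p'=-(p-1)\). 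The remaining sum is exactly the one handled at the end of the proof of Lemma~\ref{lem:tail-Hardy}. Hölder's inequality together with the classical discrete Hardy inequality \eqref{eq:tail-Hardy-discrete} gives
\[
\sum_{j\ge0} x_j\left(\frac{S_j}{j+1}\right)^{p-1}\le C_p\|x\|_p^p.
\]
Combining the two displays yields
\[
\sum_{j\ge0} w_jS_j^q\le C_{p,q}A\|x\|_p^{q},
\]
which is the asserted bound.

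The main obstacle is choosing how to split the power \(S_j^{q-1}\) so that the weight exponents cancel exactly. With the split above, the decay \((J+1)^{-q/p'}\) absorbs the pointwise growth of \(S_j^{q-p}\) and leaves precisely the \(\ell^p\) Hardy sum already treated. The other steps are routine. Abel summation requires \(W_{L+1}S_L^q\to0\), which holds because \(S_L\) is bounded for finitely supported \(x\) and \(W_L\to0\). The elementary inequality \(S_j^q-S_{j-1}^q\le qx_jS_j^{q-1}\) holds because \(q>1\).
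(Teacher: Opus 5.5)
Your proof is correct. It differs from the paper's proof mainly in where you insert the H\"older bound \(S_j^{q-p}\le (j+1)^{(q-p)/p'}\|x\|_p^{q-p}\).

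The paper applies that bound before any summation by parts. It writes \(\sum_j w_jS_j^q\le \|x\|_p^{q-p}\sum_j \widetilde w_jS_j^p\) with \(\widetilde w_j=(j+1)^{(q-p)/p'}w_j\). It must then check that the new weights satisfy \(\sum_{j\ge J}\widetilde w_j\lesssim A(J+1)^{1-p}\). That check uses a second Abel summation together with the estimate \((j+1)^c-j^c\lesssim (j+1)^{c-1}\). Only then can Lemma~\ref{lem:tail-Hardy} be applied as a black box, and the case \(q=p\) is treated separately.

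You do the Abel summation with exponent \(q\) first. The tail bound \(W_j\le A(j+1)^{-q/p'}\) is then used pointwise, and the H\"older factor is absorbed directly by the exact exponent cancellation \(-q/p'+(q-p)/p'=-(p-1)\). What remains is precisely the sum \(\sum_j x_j\bigl(S_j/(j+1)\bigr)^{p-1}\) from the end of the proof of Lemma~\ref{lem:tail-Hardy}.

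Your route is shorter and uniform in \(q\ge p\). It removes both the verification of the modified tail condition and the case split. The price is that you reopen the proof of Lemma~\ref{lem:tail-Hardy} rather than citing only its statement, whereas the paper's reduction keeps the \(p\)-case as a reusable module.
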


\begin{proof}
If \(q=p\), the result follows from
Lemma~\ref{lem:tail-Hardy}. Hence assume that \(q>p\).
Put \(S_j=\sum_{m=0}^j x_m\) and
\(c=(q-p)/p'>0\). Hölder's inequality gives
\[
        S_j^{q-p}
        \le
        (j+1)^c
        \left(\sum_{m\ge0}x_m^p\right)^{(q-p)/p}.
\]
Hence, with \(\widetilde w_j=(j+1)^cw_j\),
\begin{equation}
\label{eq:mixed-tail-reduction}
        \sum_{j\ge0}w_jS_j^q
        \le
        \left(\sum_{m\ge0}x_m^p\right)^{(q-p)/p}
        \sum_{j\ge0}\widetilde w_jS_j^p.
\end{equation}

We verify that \((\widetilde w_j)\) satisfies the hypothesis of
Lemma~\ref{lem:tail-Hardy}. Put
\(W_J=\sum_{j\ge J}w_j\). Then
\(W_J\le A(J+1)^{-q/p'}\). Since
\[
        (L+1)^cW_{L+1}
        \lesssim A(L+1)^{1-p}\longrightarrow0
        \qquad (L\to\infty).
\]
Abel summation gives
\begin{equation}
\label{eq:mixed-tail-weight}
\begin{aligned}
        \sum_{j\ge J}(j+1)^cw_j
        &=
        (J+1)^cW_J
        +
        \sum_{j>J}\bigl((j+1)^c-j^c\bigr)W_j \\
        &\lesssim_{p,q}
        A(J+1)^{1-p}
        +
        A\sum_{j>J}(j+1)^{-p} \\
        &\lesssim_{p,q}
        A(J+1)^{1-p}.
\end{aligned}
\end{equation}
Here we used
\((j+1)^c-j^c\lesssim_{p,q}(j+1)^{c-1}\),
\(c-q/p'=1-p\), and
\(c-1-q/p'=-p\).

By \eqref{eq:mixed-tail-weight} and
Lemma~\ref{lem:tail-Hardy},
\begin{equation}
\label{eq:mixed-tail-Hardy-application}
        \sum_{j\ge0}\widetilde w_jS_j^p
        \le
        C_{p,q}A\sum_{m\ge0}x_m^p.
\end{equation}
Combining \eqref{eq:mixed-tail-reduction} and
\eqref{eq:mixed-tail-Hardy-application} proves the result.
\end{proof}

For \(1<q<p<\infty\), we use the following integrated discrete
weighted Hardy criterion; see Bennett~\cite{Bennett1991}.

\begin{lemma}
\label{lem:integrated-tail-Hardy}
Let \(1<q<p<\infty\), let \(r>0\) satisfy
\(1/r=1/q-1/p\), and let \(q'\) be the conjugate exponent of \(q\).
Let \(w_j\ge0\), put \(W_J=\sum_{j\ge J}w_j\), and define
\[
        B
        =
        \left(
        \sum_{J\ge0}
        (J+1)^{r/q'}W_J^{r/q}
        \right)^{1/r}.
\]
Then
\[
        \left(
        \sum_{j\ge0}w_j
        \left(\sum_{m\le j}x_m\right)^q
        \right)^{1/q}
        \le
        C\left(\sum_{m\ge0}x_m^p\right)^{1/p}
\]
for every nonnegative sequence \((x_m)\) if and only if \(B<\infty\).
Moreover, the best constant \(C\) satisfies
\[
        C\asymp_{p,q}B.
\]
\end{lemma}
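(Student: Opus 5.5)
The plan is to reduce the statement to the classical continuous-index weighted Hardy inequality for \(q<p\), the Maz'ya–Rozin/Sinnamon criterion, via step functions. An alternative is to follow Bennett's discrete argument directly. Throughout, \(1/r=1/q-1/p\). The dual form is more convenient, so the first step is duality.

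\emph{Step 1: duality.} By duality of \(\ell^p\) and the weighted \(\ell^q(w)\) space, the inequality with constant \(C\) is equivalent to the transposed inequality
\[
\Bigl(\sum_{m\ge0}\Bigl(\sum_{j\ge m}w_jy_j\Bigr)^{p'}\Bigr)^{1/p'}\le C\Bigl(\sum_{j\ge0}w_jy_j^{q'}\Bigr)^{1/q'}.
\]
This is a tail (dual Hardy) operator from \(\ell^{q'}(w)\) to \(\ell^{p'}\), with \(q'>p'\). Either form can be used; I will work with the original form.

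\emph{Step 2: sufficiency, \(C\lesssim B\).} Take \(x\) finitely supported and nonnegative, and put \(S_j=\sum_{m\le j}x_m\). As in the proof of Lemma~\ref{lem:tail-Hardy}, Abel summation and \(S_j^q-S_{j-1}^q\le qx_jS_j^{q-1}\) give
\[
\sum_{j\ge0}w_jS_j^q\le q\sum_{j\ge0}W_jx_jS_j^{q-1}.
\]
Apply Hölder's inequality with the three exponents \(p\), \(r/q\) and \(p/(q-1)\)... but the exponents must sum correctly, so I would instead write
\[
W_jx_jS_j^{q-1}=\bigl[(j+1)^{1/q'}W_j^{1/q}\bigr]\,x_j\,\bigl[W_j^{1/q'}S_j^{q-1}(j+1)^{-1/q'}\bigr].
\]
Then I would use Hölder with exponents \(r\), \(p\) and \(q'\); these are admissible because \(1/r+1/p=1/q\) and \(1/q+1/q'=1\). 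The third factor becomes \(\bigl(\sum_j W_jS_j^q/(j+1)\bigr)^{1/q'}\).

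The hardest step is controlling the quantity \(\sum_j W_jS_j^q(j+1)^{-1}\) by the left-hand side, or by \(B\) times powers of \(\|x\|_p\) in a way that closes. Since \(W_j\) is monotone, I would try the decomposition \(W_j=\sum_{i\ge j}w_i\) and interchange the sums. Because \(S_j\le S_i\) for \(i\ge j\),
\[
\sum_jW_jS_j^q(j+1)^{-1}\le\sum_iw_iS_i^q\sum_{j\le i}(j+1)^{-1},
\]
which produces a logarithm. That loss is unacceptable, so I would change the splitting. The choice I would commit to is to weight the third factor by \(W_j^{1/q'}S_j^{q-1}\) alone and put the \((j+1)\)-powers entirely on the first factor, then use a dyadic (Bennett-type) blocking on the level sets \(W_j\approx2^{-k}W_0\). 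On each block, \(S_j\) is compared to its value at the block endpoint, and \(B^r\) is recovered as \(\sum_k 2^{-kr/q}(\text{block length})^{r/q'}\). Within a block, Hölder in \(x\) gives a factor \(|\text{block}|^{1/p'}\), and the resulting mixed sums are absorbed by Hölder with \(r/q\) and \(p/q\). If the blocking proves delicate, I would fall back on the embedding into the continuous Hardy inequality on \((0,\infty)\) with step weights. There one checks directly that the continuous Maz'ya constant is comparable to \(B\), using \(W\) nonincreasing and \((J+1)\asymp\) the interval length.

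\emph{Step 3: necessity, \(B\lesssim C\).} Test with \(x_m=\sum_{J\ge m}(J+1)^{a}W_J^{b}\,\delta_J\)-type sequences. Concretely, I would take
\[
x_m=W_m^{r/(pq)}(m+1)^{r/(pq')}\cdot(m+1)^{-1}\cdots,
\]
choosing the exponents so that \(\sum x_m^p\asymp B^r\) and, using \(S_j\ge\) the block contribution, \(\sum_jw_jS_j^q\gtrsim B^r\). Then \(B^{r/q}\lesssim C\,B^{r/p}\), and since \(r/q-r/p=1\) this gives \(B\lesssim C\). To make this rigorous when \(B\) might be infinite, I would first truncate \(w\) to finitely many terms.
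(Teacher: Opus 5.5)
\textbf{Step 2 (sufficiency) has a genuine gap as written.} The argument you commit to does not yet prove $C\lesssim B$.

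The H\"older splittings you propose leave a last factor $\sum_j W_jS_j^q(j+1)^{-1}$ or $\sum_j W_jS_j^q$. With $x=\delta_0$ (so $S_j\equiv1$), these exceed $\sum_j w_jS_j^q$ by factors $\log(i+2)$, resp.\ $i+1$, on the mass $w_i$. So the left side does not control them.

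The blocking sketch omits the one essential point. Put $\mathcal J_k=\{j:2^{-k-1}W_0<W_j\le2^{-k}W_0\}$ and $\sigma_i=\sum_{m\in\mathcal J_i}x_m$. For $j\in\mathcal J_k$ you only know $S_j\le\sum_{i\le k}\sigma_i$. All earlier blocks enter, so H\"older inside one block is not enough. The fix is as follows.
\begin{enumerate}
\item Use $\sum_{j\in\mathcal J_k}w_j\le2^{-k}W_0$ together with $\bigl(\sum_{i\le k}\sigma_i\bigr)^q\lesssim_q\sum_{i\le k}2^{(k-i)/2}\sigma_i^q$. This gives $\sum_jw_jS_j^q\lesssim W_0\sum_k2^{-k}\sigma_k^q$.
\item Then use $\sigma_k\le|\mathcal J_k|^{1/p'}\|x\|_{\ell^p(\mathcal J_k)}$ and H\"older with exponents $r/q$, $p/q$. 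What remains is $\sum_k(2^{-k}W_0)^{r/q}|\mathcal J_k|^{r/p'}$. The exponent is $r/p'=r/q'+1$, not $r/q'$ as you wrote.
\item This sum is $\lesssim B^r$, because $W_J>2^{-k-1}W_0$ on $\mathcal J_k$ and $\sum_{J\in\mathcal J_k}(J+1)^{r/q'}\gtrsim|\mathcal J_k|^{r/q'+1}$.
\end{enumerate}

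\textbf{Your fallback is sound and is closest to the paper.} The paper does not prove the Hardy inequality itself. It quotes the discrete $q<p$ criterion with constant $B_*^r=\sum_Jw_JW_J^{r/p}(J+1)^{r/p'}$. It then shows $B_*\asymp B$ by summation by parts, using $W_J^s-W_{J+1}^s\asymp w_JW_J^{s-1}$ with $s=r/q$.

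The Maz'ya--Rozin constant with $v\equiv1$ is $\int_0^\infty U(x)^{r/q}x^{r/q'}\,dx$, and this matches $B^r$ directly. The transfer needs care. Put the mass $w_j$ on $[j+\tfrac12,j+1)$. Then $\int_0^tf\ge S_j/2$ there, $U\le W_j$ on $[j,j+1)$, and $U=W_j$ on $[j,j+\tfrac12]$. For the converse, use $x_m=\int_m^{m+1}f$ together with Jensen.

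\textbf{Step 3 (necessity) has the wrong test sequence.} The factor $(m+1)^{-1}$ must be removed, and a ``block contribution'' lower bound is not what closes the argument. For truncated $w$, take $x_m=W_m^{r/(pq)}(m+1)^{r/(pq')}$.
\begin{enumerate}
\item Then $\sum_mx_m^p=B^r$.
\item Since $W$ is nonincreasing, $S_j\gtrsim W_j^{r/(pq)}(j+1)^{r/(pq')+1}$.
\item Because $q\bigl(r/(pq')+1\bigr)=r/q'+1=r/p'$, this gives $\sum_jw_jS_j^q\gtrsim\sum_jw_jW_j^{r/p}(j+1)^{r/p'}=B_*^r$.
\item To pass to $B$ you need the summation by parts $W_J^{r/q}\le\frac rq\sum_{j\ge J}w_jW_j^{r/p}$. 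This yields $B^r\lesssim B_*^r$, which is exactly one half of the paper's comparison.
\end{enumerate}
Then $B^{r/q}\lesssim CB^{r/p}$ gives $B\lesssim C$, and monotone convergence removes the truncation.
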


\begin{proof}
By the standard \(q<p\) discrete Hardy criterion
\cite[Theorem~7(iii)]{KufnerMaligrandaPersson2007}, the inequality
holds if and only if
\[
        B_*^r
        :=
        \sum_{J\ge0}
        w_JW_J^{r/p}(J+1)^{r/p'}
        <\infty,
\]
and its best constant is comparable to \(B_*\)
\cite[Theorems~2.4 and~3.9]{Sinnamon2022}.
It remains to show that \(B_*\asymp_{p,q}B\).

For \(N\ge0\), set
\(w_j^{(N)}=w_j\mathbf 1_{\{j\le N\}}\).
By monotone convergence,
\(B_*^{(N)}\uparrow B_*\) and \(B^{(N)}\uparrow B\), so it suffices
to consider finitely supported \(w\). Put
\[
        s=\frac rq=1+\frac rp,
        \qquad
        a_J=(J+1)^{r/p'}.
\]
Then \(s-1=r/p\). Since \(s>1\),
\(0\le W_{J+1}\le W_J\), and \(w_J=W_J-W_{J+1}\),
\[
        W_J^s-W_{J+1}^s
        \asymp_s
        w_JW_J^{s-1},
\]
because
\((1-t^s)/(1-t)\asymp_s1\) for \(0\le t<1\).
Summation by parts therefore gives
\begin{equation}
\label{eq:integrated-tail-summation}
\begin{aligned}
        B_*^r
        &\asymp_{p,q}
        \sum_{J\ge0}
        a_J(W_J^s-W_{J+1}^s) \\
        &=
        a_0W_0^s
        +
        \sum_{J\ge1}(a_J-a_{J-1})W_J^s.
\end{aligned}
\end{equation}
Since
\[
        a_J-a_{J-1}
        \asymp_{p,q}
        (J+1)^{r/p'-1}
        =
        (J+1)^{r/q'},
\]
where \(r/p'-1=r/q'\), and \(s=r/q\),
\eqref{eq:integrated-tail-summation} yields
\begin{equation}
\label{eq:integrated-tail-comparison}
        B_*^r
        \asymp_{p,q}
        \sum_{J\ge0}
        (J+1)^{r/q'}W_J^{r/q}
        =
        B^r.
\end{equation}
Thus \(B_*\asymp_{p,q}B\). The proof is complete.
\end{proof}

\section{The case
\texorpdfstring{\(-1<\alpha<p-2\)}{-1 < alpha < p - 2}}
\label{sec:below-critical}

Throughout this section, \(1<p,q<\infty\), \(\beta>-1\), and
\(-1<\alpha<p-2\). For \(m\ge0\), put
\(I_m^+=\{2^m,\ldots,2^{m+1}-1\}\) and
\(\Delta_m^+f(z)=\sum_{n\in I_m^+}\widehat f(n)z^n\).
Thus \(I_m^+=I_m\) for \(m\ge1\), whereas \(I_0^+=\{1\}\).

\begin{lemma}
\label{lem:prefix-estimates}
Let \(f(z)=\sum_{k\ge0}a_kz^k\in\mathcal D^p_\alpha\), and put
\(X_m=2^{m(1-(\alpha+1)/p)}\|\Delta_m^+f\|_{H^p}\). Then
\[
        \sum_{m\ge0}X_m^p
        \lesssim_{p,\alpha}
        \|f\|_{\mathcal D^p_\alpha}^p,
        \qquad
        \sup_{j\ge0}
        \left|\sum_{k=0}^j a_k\right|
        \lesssim_{p,\alpha}
        \|f\|_{\mathcal D^p_\alpha}.
\]
\end{lemma}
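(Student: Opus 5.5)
The plan is to derive both estimates from the dyadic model of Proposition~\ref{prop:dirichlet-dyadic-model}, combined with the block summation bound of Lemma~\ref{lem:block-summation-estimate}. The key point is that \(\alpha<p-2\) is equivalent to \(1-(\alpha+1)/p>1/p\). This strict gap produces a convergent geometric series.

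\textbf{First estimate.} For \(m\ge1\) we have \(\Delta_m^+f=\Delta_mf\), so
\[
\sum_{m\ge1}X_m^p=\sum_{m\ge1}2^{m(p-\alpha-1)}\|\Delta_mf\|_{H^p}^p\lesssim_{p,\alpha}\|f\|_{\mathcal D^p_\alpha}^p
\]
by Proposition~\ref{prop:dirichlet-dyadic-model}. For \(m=0\), note that \(\Delta_0^+f=a_1z\), so \(X_0=|a_1|\). Since \(|a_1|\le\|\Delta_0f\|_{H^p}\) (coefficients are bounded by the \(H^p\) norm), this term is also controlled by Proposition~\ref{prop:dirichlet-dyadic-model}. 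Alternatively, one can use \(|a_1|=|f'(0)|\lesssim\|f'\|_{A^p_\alpha}\) by subharmonicity.

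\textbf{Second estimate.} Fix \(j\ge0\) and write the partial sum as
\[
\sum_{k=0}^j a_k=a_0+\sum_{m\ge0}\ \sum_{k\in I_m^+,\,k\le j}a_k .
\]
Only the blocks with \(2^m\le j\) contribute, and at most one of them is truncated. Apply Lemma~\ref{lem:block-summation-estimate} to the interval \(I_m^+\cap[0,j]\), which has length at most \(2^m\), with \(u=\Pi_{I_m^+\cap[0,j]}\Delta_m^+f\). Lemma~\ref{lem:Hp-interval-projections} gives \(\|u\|_{H^p}\le C_p\|\Delta_m^+f\|_{H^p}\). Hence
\[
\Bigl|\sum_{k\in I_m^+,\,k\le j}a_k\Bigr|\lesssim_p 2^{m/p}\|\Delta_m^+f\|_{H^p}=2^{-m\varepsilon}X_m,
\qquad \varepsilon:=1-\frac{\alpha+1}{p}-\frac1p=\frac{p-2-\alpha}{p}>0 .
\]
Summing over \(m\) and applying H\"older's inequality gives
\[
\Bigl|\sum_{k=0}^j a_k\Bigr|\le |a_0|+C_p\Bigl(\sum_m 2^{-m\varepsilon p'}\Bigr)^{1/p'}\Bigl(\sum_m X_m^p\Bigr)^{1/p}.
\]
The middle factor is a finite constant depending only on \(p,\alpha\). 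Since \(|a_0|=|f(0)|\le\|f\|_{\mathcal D^p_\alpha}\), the first estimate completes the proof, and the bound is uniform in \(j\).

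The only delicate step is the truncated block, and Lemma~\ref{lem:Hp-interval-projections} handles it. No real obstacle is expected. The essential input is the strict inequality \(\alpha<p-2\), which gives \(\varepsilon>0\) and makes the geometric series \(\sum_m 2^{-m\varepsilon p'}\) summable.
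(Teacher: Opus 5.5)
Your proposal is correct and follows essentially the same route as the paper. The first estimate comes directly from Proposition~\ref{prop:dirichlet-dyadic-model}, with the \(m=0\) term \(X_0=|a_1|\) absorbed by the \(\Delta_0\) term. The second comes from Lemmas~\ref{lem:block-summation-estimate} and~\ref{lem:Hp-interval-projections}, which bound each block by \(2^{-m\sigma}X_m\) with \(\sigma=1-(\alpha+2)/p>0\), followed by H\"older's inequality. The only cosmetic difference is that the paper first bounds the full block sums \(B_m(f)\) and then handles the single truncated block separately, whereas you bound every (possibly truncated) block by the same quantity and sum all at once.
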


\begin{proof}
By Proposition~\ref{prop:dirichlet-dyadic-model}, separating the
constant term,
\[
        |a_0|^p+
        \sum_{m\ge0}
        2^{m(p-\alpha-1)}
        \|\Delta_m^+f\|_{H^p}^p
        \lesssim_{p,\alpha}
        \|f\|_{\mathcal D^p_\alpha}^p.
\]
Since \(p-\alpha-1=p(1-(\alpha+1)/p)\), this proves the first
estimate.

Put \(\sigma=1-(\alpha+2)/p>0\). For every interval
\(J\subset I_m^+\), we have
\(\Pi_J(\Delta_m^+f)=\sum_{n\in J}a_nz^n\). By
Lemmas~\ref{lem:block-summation-estimate} and
\ref{lem:Hp-interval-projections}, together with
\(|J|\le |I_m^+|=2^m\),
\[
\begin{aligned}
        \left|\sum_{n\in J}a_n\right|
        &\lesssim_p
        |J|^{1/p}\|\Pi_J(\Delta_m^+f)\|_{H^p} \\
        &\lesssim_p
        2^{m/p}\|\Delta_m^+f\|_{H^p}
        =
        2^{-m\sigma}X_m.
\end{aligned}
\]
In particular, if
\(B_m(f)=\sum_{n\in I_m^+}a_n\), then
\[
        |B_m(f)|\lesssim_p2^{-m\sigma}X_m.
\]
Since \(\sigma>0\),
\((2^{-m\sigma})_{m\ge0}\in\ell^{p'}\), and Hölder's inequality gives
\[
        \sum_{m\ge0}|B_m(f)|
        \lesssim_{p,\alpha}
        \left(\sum_{m\ge0}X_m^p\right)^{1/p}
        \lesssim_{p,\alpha}
        \|f\|_{\mathcal D^p_\alpha}.
\]
For \(j=0\), the result follows from
\(|a_0|\lesssim_{p,\alpha}\|f\|_{\mathcal D^p_\alpha}\).
If \(j\ge1\), let \(M\ge0\) be the unique index such that
\(j\in I_M^+\). Then
\[
        \sum_{k=0}^j a_k
        =
        a_0+\sum_{m<M}B_m(f)
        +
        \sum_{\substack{k\in I_M^+\\ k\le j}}a_k.
\]
Hence
\[
\begin{aligned}
        \left|\sum_{k=0}^j a_k\right|
        &\le
        |a_0|+\sum_{m<M}|B_m(f)|
        +
        \left|
        \sum_{\substack{k\in I_M^+\\ k\le j}}a_k
        \right| \\
        &\lesssim_{p,\alpha}
        |a_0|+\sum_{m\ge0}|B_m(f)|
        +2^{-M\sigma}X_M \\
        &\lesssim_{p,\alpha}
        \|f\|_{\mathcal D^p_\alpha},
\end{aligned}
\]
uniformly in \(j\). This proves the second estimate.
\end{proof}

\begin{lemma}
\label{lem:prefix-block-estimate}
Let \(1<p,q<\infty\), \(-1<\alpha<p-2\), and let
\(f(z)=\sum_{k\ge0}a_kz^k\in\mathcal D^p_\alpha\). Define
\(\lambda_j(f)=\sum_{k=0}^{j+1}a_k\). Then, for every analytic
polynomial \(g\) and every \(m\ge0\),
\[
        \|\Delta_m^+T_{\lambda(f)}g\|_{H^q}
        \le
        C_{p,q,\alpha}
        \|f\|_{\mathcal D^p_\alpha}
        \|\Delta_m^+g\|_{H^q}.
\]
\end{lemma}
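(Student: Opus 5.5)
Since \(\Delta_m^+\) commutes with the coefficient multiplier \(T_{\lambda(f)}\), we have \(\Delta_m^+T_{\lambda(f)}g=T_{\lambda(f)\mathbf 1_{I_m^+}}(\Delta_m^+g)\). By Lemma~\ref{lem:bv-multiplier} (applied with exponent \(q\)), it therefore suffices to bound the bounded-variation quantity
\[
A_m:=\sup_{n\in I_m^+}|\lambda_n(f)|+\sum_{n}\bigl|\lambda_{n+1}(f)\mathbf 1_{I_m^+}(n+1)-\lambda_n(f)\mathbf 1_{I_m^+}(n)\bigr|
\]
by a constant multiple of \(\|f\|_{\mathcal D^p_\alpha}\), uniformly in \(m\). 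The sup term and the two boundary jumps at the endpoints of \(I_m^+\) are each at most \(\sup_j|\sum_{k\le j}a_k|\). By the second estimate of Lemma~\ref{lem:prefix-estimates}, this is \(\lesssim_{p,\alpha}\|f\|_{\mathcal D^p_\alpha}\).

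The remaining interior variation is \(\sum_{n,n+1\in I_m^+}|\lambda_{n+1}(f)-\lambda_n(f)|=\sum_{n}|a_{n+2}|\) over indices \(n+2\in I_m^+\cup I_{m+1}^+\) (a shift by at most two positions). So we must control \(\sum_{k\in I_\ell^+}|a_k|\) for \(\ell=m,m+1\), and this is the main step. Lemma~\ref{lem:block-summation-estimate} only controls the signed sum, so we use instead the \(\ell^1\)-type bound on coefficients of a block polynomial. Write \(|a_k|=\epsilon_ka_k\) with \(|\epsilon_k|=1\). Then
\[
\sum_{k\in I_\ell^+}|a_k|=\int_{\T}\Delta_\ell^+f\,\overline{\Phi}\,dm,\qquad \Phi=\sum_{k\in I_\ell^+}\overline{\epsilon_k}z^k .
\]
Hölder gives a bound by \(\|\Delta_\ell^+f\|_{H^p}\|\Phi\|_{L^{p'}}\).

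Bounding \(\|\Phi\|_{L^{p'}}\) is the delicate point. For \(p'\le2\) we have \(\|\Phi\|_{L^{p'}}\le\|\Phi\|_{L^2}=2^{\ell/2}\), while for \(p'>2\) we only get \(\|\Phi\|_{L^{p'}}\le\|\Phi\|_{L^\infty}\le 2^\ell\). A cleaner route avoids unimodular sums: by Hausdorff–Young-type estimates, or simply \(|a_k|\le\|\Delta_\ell^+f\|_{H^1}\le\|\Delta_\ell^+f\|_{H^p}\), we get
\[
\sum_{k\in I_\ell^+}|a_k|\le 2^\ell\|\Delta_\ell^+f\|_{H^p}=2^{\ell(1-1/p)}\,2^{\ell/p}\|\Delta_\ell^+f\|_{H^p}.
\]
This loses too much, since we have \(2^{\ell/p}\|\Delta_\ell^+f\|_{H^p}=2^{-\ell\sigma}X_\ell\) with \(\sigma=1-(\alpha+2)/p\), and the factor \(2^{\ell(1-1/p)}\) is not compensated. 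So the plain \(\ell^1\) variation is not the right tool.

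The fix is to avoid splitting into absolute values. The multiplier restricted to \(I_m^+\) is a constant plus partial sums inside the block:
\[
\lambda_n(f)=\sum_{k<2^m}a_k+\sum_{\substack{k\in I_m^+\cup I_{m+1}^+\\ 2^m\le k\le n+1}}a_k .
\]
The constant part is handled by Lemma~\ref{lem:Hp-interval-projections} together with Lemma~\ref{lem:prefix-estimates}. The second part equals \(\mathcal T_I(u,\Delta_m^+g)\), up to a shift by one, with \(I=I_m^+\cup\{2^{m+1}\}\) and \(u=\Pi_{I}f\). Lemma~\ref{lem:local-triangular-estimate} then gives
\[
\|\mathcal T_I(u,\Delta_m^+g)\|_{H^q}\lesssim_{p,q}2^{m/p}\|u\|_{H^p}\|\Delta_m^+g\|_{H^q}.
\]
We have \(\|u\|_{H^p}\lesssim\|\Delta_m^+f\|_{H^p}+\|\Delta_{m+1}^+f\|_{H^p}\), so the prefactor is \(2^{m/p}\|u\|_{H^p}\lesssim 2^{-m\sigma}(X_m+X_{m+1})\lesssim_{p,\alpha}\|f\|_{\mathcal D^p_\alpha}\), using \(\sigma>0\) and the first estimate of Lemma~\ref{lem:prefix-estimates}. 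The index shift (\(\lambda_n\) sums to \(n+1\)) is absorbed by splitting off the single extra coefficient \(a_{2^{m+1}}\) or by reindexing \(u\) by one.
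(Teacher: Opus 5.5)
Your final argument is essentially the paper's proof: you split \(\lambda_n(f)\) into the prefix constant \(\sum_{k<2^m}a_k\), which Lemma~\ref{lem:prefix-estimates} bounds, plus an in-block partial sum; you identify the latter with \(\mathcal T_{J_m}\) on \(J_m=\{2^m,\ldots,2^{m+1}\}\); and you apply Lemma~\ref{lem:local-triangular-estimate} together with \(2^{m/p}\|u_m\|_{H^p}\lesssim 2^{-m\sigma}(X_m+X_{m+1})\). The one step to state precisely is the off-by-one shift. The paper uses \(\mathcal T_{J_m}(u_m,z\,\Delta_m^+g)=z\,W_m\), and your ``reindex by one'' option works equally well, but merely splitting off \(a_{2^{m+1}}\) would leave the discrepancy \(a_{n+1}\widehat g(n)\) at every \(n\in I_m^+\).
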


\begin{proof}
Put \(g_m=\Delta_m^+g\). For \(n\in I_m^+\), write
\(\lambda_n(f)=A_m(f)+P_{m,n}(f)\), where
\(A_m(f)=\sum_{k=0}^{2^m-1}a_k\) and
\(P_{m,n}(f)=\sum_{k=2^m}^{n+1}a_k\). Then
\(\Delta_m^+T_{\lambda(f)}g=A_m(f)g_m+W_m\), where
\[
        W_m(z)=\sum_{n\in I_m^+}
        P_{m,n}(f)\widehat g(n)z^n.
\]
The second estimate in Lemma~\ref{lem:prefix-estimates} gives
\(|A_m(f)|\lesssim_{p,\alpha}\|f\|_{\mathcal D^p_\alpha}\), and hence
\[
        \|A_m(f)g_m\|_{H^q}
        \lesssim_{p,\alpha}
        \|f\|_{\mathcal D^p_\alpha}\|g_m\|_{H^q}.
\]

To estimate \(W_m\), let
\(J_m=\{2^m,\ldots,2^{m+1}\}\),
\(u_m=\sum_{k\in J_m}a_kz^k\), and
\(\widetilde v_m=zg_m\). Then \(u_m\) and \(\widetilde v_m\) are
supported in \(J_m\),
\(\widehat{\widetilde v_m}(2^m)=0\), and
\(\widehat{\widetilde v_m}(\ell)=\widehat g(\ell-1)\) for
\(2^m+1\le\ell\le2^{m+1}\). By the definition of
\(\mathcal T_{J_m}\),
\[
\begin{aligned}
        \mathcal T_{J_m}(u_m,\widetilde v_m)(z)
        &=
        \sum_{\ell=2^m+1}^{2^{m+1}}
        \left(
        \sum_{k=2^m}^{\ell}a_k
        \right)
        \widehat g(\ell-1)z^\ell \\
        &=
        z\sum_{n\in I_m^+}
        \left(
        \sum_{k=2^m}^{n+1}a_k
        \right)
        \widehat g(n)z^n
        =
        zW_m(z).
\end{aligned}
\]
Since multiplication by \(z\) is isometric on \(H^q\),
Lemma~\ref{lem:local-triangular-estimate} and
\(|J_m|\asymp2^m\) give
\[
        \|W_m\|_{H^q}
        \lesssim_{p,q}
        2^{m/p}\|u_m\|_{H^p}\|g_m\|_{H^q}.
\]
Put \(\sigma=1-(\alpha+2)/p>0\), and let \(X_m\) be as in
Lemma~\ref{lem:prefix-estimates}. Since
\(u_m=\Delta_m^+f+
\Pi_{\{2^{m+1}\}}(\Delta_{m+1}^+f)\), Lemmas
\ref{lem:Hp-interval-projections} and
\ref{lem:prefix-estimates} give
\[
\begin{aligned}
        2^{m/p}\|u_m\|_{H^p}
        &\lesssim_p
        2^{m/p}
        \bigl(
        \|\Delta_m^+f\|_{H^p}
        +
        \|\Delta_{m+1}^+f\|_{H^p}
        \bigr) \\
        &\lesssim_{p,\alpha}
        2^{-m\sigma}(X_m+X_{m+1})
        \lesssim_{p,\alpha}
        \|f\|_{\mathcal D^p_\alpha}.
\end{aligned}
\]
Consequently,
\[
        \|W_m\|_{H^q}
        \lesssim_{p,q,\alpha}
        \|f\|_{\mathcal D^p_\alpha}\|g_m\|_{H^q}.
\]
Combining this with the estimate for \(A_m(f)g_m\) proves the lemma.
\end{proof}

\begin{proposition}
\label{prop:prefix-multiplier}
Let \(1<p,q<\infty\), \(\alpha,\beta>-1\), \(p>\alpha+2\), and
\(f(z)=\sum_{k\ge0}a_kz^k\in\mathcal D^p_\alpha\). Put
\(\lambda_j(f)=\sum_{k=0}^{j+1}a_k\). Then \(T_{\lambda(f)}\) is
bounded on \(A^q_\beta\), and
\[
        \|T_{\lambda(f)}g\|_{A^q_\beta}
        \le
        C_{p,q,\alpha,\beta}
        \|f\|_{\mathcal D^p_\alpha}
        \|g\|_{A^q_\beta},
        \qquad g\in A^q_\beta.
\]
\end{proposition}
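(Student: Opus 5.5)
The plan is to combine the blockwise estimate of Lemma~\ref{lem:prefix-block-estimate} with the dyadic model of Proposition~\ref{prop:bergman-dyadic-model}. First I treat an analytic polynomial \(g\). The multiplier \(T_{\lambda(f)}\) acts diagonally on Taylor coefficients, so it commutes with every coefficient projection. Hence
\[
\Delta_m^+T_{\lambda(f)}g=T_{\lambda(f)}\Delta_m^+g
\qquad\text{and}\qquad
\Delta_0T_{\lambda(f)}g=\lambda_0(f)\widehat g(0)+\Delta_0^+T_{\lambda(f)}g .
\]

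Next I bound the blocks. The constant term satisfies \(|\lambda_0(f)|=|a_0+a_1|\lesssim_{p,\alpha}\|f\|_{\mathcal D^p_\alpha}\) by the second estimate of Lemma~\ref{lem:prefix-estimates}, and clearly \(|\widehat g(0)|\le\|\Delta_0g\|_{H^q}\). For \(m\ge1\) we have \(\Delta_m=\Delta_m^+\), so Lemma~\ref{lem:prefix-block-estimate} gives \(\|\Delta_mT_{\lambda(f)}g\|_{H^q}\lesssim\|f\|_{\mathcal D^p_\alpha}\|\Delta_mg\|_{H^q}\). The same lemma with \(m=0\), together with \(\|\Delta_0^+g\|_{H^q}\lesssim\|\Delta_0g\|_{H^q}\) from Lemma~\ref{lem:Hp-interval-projections}, handles the remaining part of \(\Delta_0\).

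Summing these bounds against the weights \(2^{-j(\beta+1)}\) and applying Proposition~\ref{prop:bergman-dyadic-model} in both directions gives, for polynomials,
\[
\|T_{\lambda(f)}g\|_{A^q_\beta}\lesssim\|f\|_{\mathcal D^p_\alpha}\|g\|_{A^q_\beta}.
\]

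The remaining step, and the only delicate one, is passing to a general \(g\in A^q_\beta\). I would define \(T_{\lambda(f)}g\) coefficientwise; it is holomorphic because \(\lambda(f)\) is bounded by Lemma~\ref{lem:prefix-estimates}. Rather than relying on density, I would apply the block argument directly to \(g\). Each \(\Delta_m g\) is a polynomial, so the blockwise bounds hold verbatim. Proposition~\ref{prop:bergman-dyadic-model} holds for every \(g\in\Hol(\D)\), so the dyadic sum for \(T_{\lambda(f)}g\) is dominated by that of \(g\), which is finite and comparable to \(\|g\|_{A^q_\beta}^q\). This argument avoids any limiting procedure, so I do not expect a real obstacle; the substantive work has already been done in Lemma~\ref{lem:prefix-block-estimate}.
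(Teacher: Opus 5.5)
Your proposal is correct and follows the paper's proof. Like the paper, you separate the constant term \(\lambda_0(f)\widehat g(0)\), bound it with Lemma~\ref{lem:prefix-estimates}, bound the blocks with Lemma~\ref{lem:prefix-block-estimate}, and sum using Proposition~\ref{prop:bergman-dyadic-model}. The only difference is the last step: the paper proves the estimate for polynomials and concludes by density, whereas you apply the dyadic model directly to the coefficientwise-defined \(T_{\lambda(f)}g\) for arbitrary \(g\in\Hol(\D)\), which is equally valid and avoids having to identify the density extension with the coefficient multiplier.
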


\begin{proof}
It suffices to prove the estimate for analytic polynomials \(g\).
For \(h=T_{\lambda(f)}g\), we have
\(\widehat h(0)=\lambda_0(f)\widehat g(0)\). Hence
Proposition~\ref{prop:bergman-dyadic-model}, with the constant term
separated, and Lemmas~\ref{lem:prefix-estimates} and
\ref{lem:prefix-block-estimate} give
\[
\begin{aligned}
        \|T_{\lambda(f)}g\|_{A^q_\beta}^q
        &\lesssim_{q,\beta}
        |\lambda_0(f)\widehat g(0)|^q
        +
        \sum_{m\ge0}
        2^{-m(\beta+1)}
        \|\Delta_m^+T_{\lambda(f)}g\|_{H^q}^q \\
        &\lesssim_{p,q,\alpha,\beta}
        \|f\|_{\mathcal D^p_\alpha}^q
        \left(
        |\widehat g(0)|^q
        +
        \sum_{m\ge0}
        2^{-m(\beta+1)}
        \|\Delta_m^+g\|_{H^q}^q
        \right) \\
        &\lesssim_{p,q,\alpha,\beta}
        \|f\|_{\mathcal D^p_\alpha}^q
        \|g\|_{A^q_\beta}^q.
\end{aligned}
\]
Taking \(q\)-th roots and using density of analytic polynomials in
\(A^q_\beta\) proves the result.
\end{proof}

\begin{proof}[Proof of Theorem~\ref{thm:intro-below-critical}]
By Proposition~\ref{prop:dirichlet-dyadic-model},
\(\mathcal D^q_\beta=B^{1-(\beta+1)/q}_{q,q}\) with equivalent norms.
Thus it remains to prove the characterization in terms of
\(\mathcal D^q_\beta\), together with the operator norm estimate and
compactness.

If \(\mathcal R_{(\eta)}\) is bounded, then
\(\mathcal R_{(\eta)}1=F_\eta\). Since
\(\|1\|_{\mathcal D^p_\alpha}=1\),
\[
\|F_\eta\|_{\mathcal D^q_\beta}
\le
\|\mathcal R_{(\eta)}\|_{
\mathcal D^p_\alpha\to\mathcal D^q_\beta}.
\]

Conversely, assume that \(F_\eta\in\mathcal D^q_\beta\). For
\(f(z)=\sum_{k\ge0}a_kz^k\), put
\(\lambda_j(f)=\sum_{k=0}^{j+1}a_k\). If \(f\) is an analytic
polynomial, then
\[
(\mathcal R_{(\eta)}f)'
=
\sum_{j\ge0}(j+1)\eta_{j+1}\lambda_j(f)z^j
=
T_{\lambda(f)}(F_\eta').
\]
Since
\((\mathcal R_{(\eta)}f)(0)=\eta_0a_0\), with
\(|\eta_0|\le\|F_\eta\|_{\mathcal D^q_\beta}\) and
\(|a_0|\le\|f\|_{\mathcal D^p_\alpha}\),
Proposition~\ref{prop:prefix-multiplier} gives
\[
\begin{aligned}
\|\mathcal R_{(\eta)}f\|_{\mathcal D^q_\beta}
&\lesssim_{q,\beta}
|\eta_0a_0|
+
\|T_{\lambda(f)}(F_\eta')\|_{A^q_\beta} \\
&\lesssim_{p,q,\alpha,\beta}
\|F_\eta\|_{\mathcal D^q_\beta}
\|f\|_{\mathcal D^p_\alpha}.
\end{aligned}
\]
By density of analytic polynomials in \(\mathcal D^p_\alpha\),
\(\mathcal R_{(\eta)}\) extends uniquely to a bounded operator from
\(\mathcal D^p_\alpha\) into \(\mathcal D^q_\beta\). Together with
the necessity estimate above, this gives the norm equivalence.

It remains to prove compactness. Choose polynomials \(P_N\) such that
\(\|F_\eta'-P_N\|_{A^q_\beta}\to0\). For
\(P(z)=\sum_{j=0}^{M}c_jz^j\), define \(K_P\) by
\((K_Pf)(0)=\eta_0a_0\) and
\((K_Pf)'=T_{\lambda(f)}P\). Explicitly,
\[
K_Pf(z)
=
\eta_0a_0
+
\sum_{j=0}^{M}
\frac{c_j}{j+1}
\left(\sum_{k=0}^{j+1}a_k\right)z^{j+1}.
\]
The Taylor coefficient functionals are continuous on
\(\mathcal D^p_\alpha\), so \(K_P\) is bounded. Moreover,
\(\operatorname{Ran}K_P\subset
\operatorname{span}\{1,z,\ldots,z^{M+1}\}\), and hence \(K_P\) has
finite rank.
For every analytic polynomial \(f\),
\((\mathcal R_{(\eta)}f-K_Pf)(0)=0\) and
\[
(\mathcal R_{(\eta)}f-K_Pf)'
=
T_{\lambda(f)}(F_\eta'-P).
\]
Hence Proposition~\ref{prop:prefix-multiplier} gives
\[
\|\mathcal R_{(\eta)}f-K_Pf\|_{\mathcal D^q_\beta}
\lesssim_{p,q,\alpha,\beta}
\|f\|_{\mathcal D^p_\alpha}
\|F_\eta'-P\|_{A^q_\beta}.
\]
By density,
\[
\|\mathcal R_{(\eta)}-K_P\|_{
\mathcal D^p_\alpha\to\mathcal D^q_\beta}
\lesssim_{p,q,\alpha,\beta}
\|F_\eta'-P\|_{A^q_\beta}.
\]
Taking \(P=P_N\) gives
\[
\|\mathcal R_{(\eta)}-K_{P_N}\|_{
\mathcal D^p_\alpha\to\mathcal D^q_\beta}
\longrightarrow0.
\]
Since each \(K_{P_N}\) has finite rank,
\(\mathcal R_{(\eta)}\) is compact.
\end{proof}


\section{The case
\texorpdfstring{\(\alpha=p-2\)}{alpha = p - 2}}
\label{sec:critical-case}

Throughout this section, \(1<p,q<\infty\) and \(\beta>-1\). Put
\[
w_j
:=
2^{j(q-\beta-1)}
\|\Delta_jF_\eta\|_{H^q}^q,
\qquad
W_J:=\sum_{j\ge J}w_j.
\]

\begin{proof}[Proof of Theorem~\ref{thm:intro-critical}]
By Corollary~\ref{cor:critical-truncated-besov-norms}, it is enough
to prove the characterizations and norm estimates in terms of
\(G_{p,q,\beta}(\eta)\) and \(W_J(\eta)\).

We first prove boundedness and the operator norm estimates.
By Proposition~\ref{prop:dirichlet-dyadic-model},
\begin{equation}
\label{eq:critical-source-dyadic}
        \|f\|_{\mathcal D^p_{p-2}}^p
        \asymp_p
        |f(0)|^p+
        \sum_{j\ge0}2^j\|\Delta_jf\|_{H^p}^p,
\end{equation}
while
\begin{equation}
\label{eq:target-dirichlet-dyadic}
        \|h\|_{\mathcal D^q_\beta}^q
        \asymp_{q,\beta}
        |h(0)|^q+
        \sum_{j\ge0}
        2^{j(q-\beta-1)}\|\Delta_jh\|_{H^q}^q.
\end{equation}

For sufficiency, by density it is enough to consider analytic
polynomials. For \(n\in I_j\),
\[
        \sum_{k=0}^{n}\widehat f(k)
        =
        \sum_{m<j}\sum_{k\in I_m}\widehat f(k)
        +
        \sum_{\substack{k\in I_j\\ k\le n}}\widehat f(k).
\]
Therefore
\begin{equation}
\label{eq:rhaly-dyadic-block-decomposition}
\begin{aligned}
        \Delta_j(\mathcal R_{(\eta)}f)
        &=
        \left(
        \sum_{m<j}\sum_{k\in I_m}\widehat f(k)
        \right)\Delta_jF_\eta\\
        &\quad+
        \sum_{n\in I_j}\eta_n
        \left(
        \sum_{\substack{k\in I_j\\ k\le n}}\widehat f(k)
        \right)z^n.
\end{aligned}
\end{equation}
By Lemma~\ref{lem:block-summation-estimate},
\[
        \left|\sum_{k\in I_m}\widehat f(k)\right|
        \le
        C_p2^{m/p}\|\Delta_mf\|_{H^p},
\]
while Lemma~\ref{lem:local-triangular-estimate} gives
\[
        \left\|
        \sum_{n\in I_j}\eta_n
        \left(
        \sum_{\substack{k\in I_j\\ k\le n}}\widehat f(k)
        \right)z^n
        \right\|_{H^q}
        \le
        C_{p,q}2^{j/p}
        \|\Delta_jf\|_{H^p}\|\Delta_jF_\eta\|_{H^q}.
\]
Consequently,
\begin{equation}
\label{eq:rhaly-basic-block-estimate}
        \|\Delta_j(\mathcal R_{(\eta)}f)\|_{H^q}
        \lesssim_{p,q}
        \|\Delta_jF_\eta\|_{H^q}
        \sum_{m\le j}2^{m/p}\|\Delta_mf\|_{H^p}.
\end{equation}
Set \(x_m=2^{m/p}\|\Delta_mf\|_{H^p}\). Then
\[
        2^{j(q-\beta-1)}
        \|\Delta_j(\mathcal R_{(\eta)}f)\|_{H^q}^q
        \lesssim_{p,q}
        w_j\left(\sum_{m\le j}x_m\right)^q.
\]
If \(q<p\), Lemma~\ref{lem:integrated-tail-Hardy} and
\eqref{eq:critical-source-dyadic} give
\[
\begin{aligned}
        \sum_{j\ge0}
        2^{j(q-\beta-1)}
        \|\Delta_j(\mathcal R_{(\eta)}f)\|_{H^q}^q
        &\lesssim_{p,q}
        G_{p,q,\beta}(\eta)
        \left(\sum_{m\ge0}x_m^p\right)^{q/p}\\
        &\lesssim_{p,q}
        G_{p,q,\beta}(\eta)
        \|f\|_{\mathcal D^p_{p-2}}^q.
\end{aligned}
\]
If \(p\le q\), Lemma~\ref{lem:mixed-tail-Hardy} and
\eqref{eq:critical-source-dyadic} give
\[
\begin{aligned}
        \sum_{j\ge0}
        2^{j(q-\beta-1)}
        \|\Delta_j(\mathcal R_{(\eta)}f)\|_{H^q}^q
        &\lesssim_{p,q}
        \sup_{J\ge0}(J+1)^{q/p'}W_J(\eta)
        \left(\sum_{m\ge0}x_m^p\right)^{q/p}\\
        &\lesssim_{p,q}
        \sup_{J\ge0}(J+1)^{q/p'}W_J(\eta)
        \|f\|_{\mathcal D^p_{p-2}}^q.
\end{aligned}
\]
Moreover,
\[
        |\eta_0|^q
        \le
        \|\Delta_0F_\eta\|_{H^q}^q
        =
        w_0
        \le
        W_0
        \le
        \begin{cases}
        G_{p,q,\beta}(\eta), & q<p,\\[1mm]
        \displaystyle
        \sup_{J\ge0}(J+1)^{q/p'}W_J(\eta), & p\le q.
        \end{cases}
\]
Since \((\mathcal R_{(\eta)}f)(0)=\eta_0f(0)\), the constant term
satisfies the same upper bound. This proves sufficiency and the upper
operator norm estimates.

We turn to necessity. Assume that \(\mathcal R_{(\eta)}\) is bounded.
Taking \(f\equiv1\) gives
\(F_\eta=\mathcal R_{(\eta)}1\in\mathcal D^q_\beta\), and hence
\[
        W_0
        \lesssim_{q,\beta}
        \|F_\eta\|_{\mathcal D^q_\beta}^q
        \le
        \|\mathcal R_{(\eta)}\|^q.
\]

Suppose first that \(q<p\). For \(m\ge0\), put
\[
        I_m^+=\{2^m,\ldots,2^{m+1}-1\},
        \qquad
        u_m^+=|I_m^+|^{-1}\sum_{k\in I_m^+}z^k.
\]
Let \(x=(x_m)_{m\ge0}\) be a finitely supported nonnegative sequence,
and define
\[
        f_x
        =
        x_0+\sum_{m\ge0}x_{m+1}u_m^+.
\]
Then \(\Delta_0f_x=x_0+x_1z\) and
\(\Delta_mf_x=x_{m+1}u_m^+\) for \(m\ge1\). Since
\(f_x(0)=x_0\),
\(\|\Delta_0f_x\|_{H^p}^p\lesssim_p x_0^p+x_1^p\), and
Lemma~\ref{lem:normalized-interval-averages} gives
\(\|u_m^+\|_{H^p}^p\lesssim_p2^{-m}\), we obtain from
\eqref{eq:critical-source-dyadic}
\[
        \|f_x\|_{\mathcal D^p_{p-2}}^p
        \lesssim_p
        x_0^p+x_1^p+
        \sum_{m\ge1}
        2^m x_{m+1}^p\|u_m^+\|_{H^p}^p
        \lesssim_p
        \sum_{m\ge0}x_m^p.
\]
Thus
\[
        \|f_x\|_{\mathcal D^p_{p-2}}
        \lesssim_p
        \|x\|_{\ell^p}.
\]

For \(j\ge1\), set \(S_j=\sum_{m=0}^j x_m\). If \(n\in I_j\), then
\[
        \sum_{k=0}^{n}\widehat f_x(k)
        =
        S_j+x_{j+1}\theta_{j,n},
        \qquad
        \theta_{j,n}
        =
        \frac{n-2^j+1}{|I_j|}\in(0,1].
\]
Define \(\rho_j=(\rho_{j,n})_{n\ge0}\) by
\[
        \rho_{j,n}
        =
        \begin{cases}
        \displaystyle
        \frac{S_j}{S_j+x_{j+1}\theta_{j,n}},
            & n\in I_j \ \text{and}\ S_j>0, \\[6pt]
        0,  & \text{otherwise}.
        \end{cases}
\]
If \(S_j>0\), then \((\rho_{j,n})_{n\in I_j}\) is nonincreasing,
takes values in \((0,1]\), and vanishes outside \(I_j\). Hence its
total variation is at most \(2\). If \(S_j=0\), then
\(\rho_j\equiv0\). In either case,
\[
        T_{\rho_j}\Delta_j(\mathcal R_{(\eta)}f_x)
        =
        S_j\Delta_jF_\eta.
\]
Lemma~\ref{lem:bv-multiplier} therefore gives
\[
        S_j\|\Delta_jF_\eta\|_{H^q}
        \lesssim_q
        \|\Delta_j(\mathcal R_{(\eta)}f_x)\|_{H^q}.
\]
Taking \(q\)-th powers, multiplying by
\(2^{j(q-\beta-1)}\), and summing over \(j\ge1\), we obtain from
\eqref{eq:target-dirichlet-dyadic}
\[
\begin{aligned}
        \sum_{j\ge1}w_jS_j^q
        &\lesssim_{q,\beta}
        \|\mathcal R_{(\eta)}f_x\|_{\mathcal D^q_\beta}^q\\
        &\le
        \|\mathcal R_{(\eta)}\|^q
        \|f_x\|_{\mathcal D^p_{p-2}}^q
        \lesssim_{p,q}
        \|\mathcal R_{(\eta)}\|^q\|x\|_{\ell^p}^q.
\end{aligned}
\]
Since \(S_0=x_0\) and
\(w_0\le W_0\lesssim_{q,\beta}\|\mathcal R_{(\eta)}\|^q\),
the same estimate holds with the sum starting at \(j=0\).
For a general nonnegative sequence \(x\), apply the preceding
estimate to
\(x^{(N)}=(x_0,\ldots,x_N,0,\ldots)\) and let \(N\to\infty\).
By monotone convergence, the Hardy inequality in
Lemma~\ref{lem:integrated-tail-Hardy} holds with constant
\(C_{p,q,\beta}\|\mathcal R_{(\eta)}\|\). Hence
\[
        G_{p,q,\beta}(\eta)=B^q
        \lesssim_{p,q,\beta}
        \|\mathcal R_{(\eta)}\|^q.
\]
This proves necessity and the lower operator norm estimate when
\(q<p\).

Suppose now that \(p\le q\). For \(m\ge0\), let
\[
        u_m=|I_m|^{-1}\sum_{k\in I_m}z^k,
        \qquad
        \varphi_J=J^{-1/p}\sum_{m=0}^{J-1}u_m,
        \quad J\ge2.
\]
The dyadic supports of the \(u_m\) are disjoint, and
\(\Delta_m\varphi_J=J^{-1/p}u_m\) for \(0\le m<J\).
Moreover, \(u_0(0)=1/2\) and \(u_m(0)=0\) for \(m\ge1\), so
\(|\varphi_J(0)|^p=2^{-p}J^{-1}\). Hence
Lemma~\ref{lem:normalized-interval-averages} and
\eqref{eq:critical-source-dyadic} give
\[
        \|\varphi_J\|_{\mathcal D^p_{p-2}}^p
        \lesssim_p
        J^{-1}
        +
        J^{-1}\sum_{m=0}^{J-1}
        2^m\|u_m\|_{H^p}^p
        \lesssim_p1.
\]
Since
\(\bigcup_{m=0}^{J-1}I_m=\{0,\ldots,2^J-1\}\),
\(\varphi_J\) is a polynomial of degree \(2^J-1\). Hence, for
\(j\ge J\) and \(n\in I_j\),
\[
        \sum_{k=0}^n\widehat{\varphi_J}(k)
        =
        J^{-1/p}\sum_{m=0}^{J-1}1
        =
        J^{1/p'}.
\]
It follows that
\[
        \Delta_j(\mathcal R_{(\eta)}\varphi_J)
        =
        J^{1/p'}\Delta_jF_\eta,
        \qquad j\ge J.
\]
Using \eqref{eq:target-dirichlet-dyadic}, we obtain, for \(J\ge2\),
\[
\begin{aligned}
        J^{q/p'}W_J
        &=
        \sum_{j\ge J}2^{j(q-\beta-1)}
        \|\Delta_j(\mathcal R_{(\eta)}\varphi_J)\|_{H^q}^q\\
        &\lesssim_{q,\beta}
        \|\mathcal R_{(\eta)}\varphi_J\|_{\mathcal D^q_\beta}^q\\
        &\lesssim_{p,q}
        \|\mathcal R_{(\eta)}\|^q.
\end{aligned}
\]
Since \(J+1\asymp J\) for \(J\ge2\), while
\(W_0\lesssim_{q,\beta}\|\mathcal R_{(\eta)}\|^q\) and
\(W_1\le W_0\),
\[
        \sup_{J\ge0}(J+1)^{q/p'}W_J
        \lesssim_{p,q,\beta}
        \|\mathcal R_{(\eta)}\|^q.
\]
This completes the proof of the boundedness characterization and
operator norm estimates.

We next prove compactness. For \(N\ge0\), let
\[
        Q_Nf=\sum_{j=0}^N\Delta_jf,
\]
and define \(\eta^{(N)}\) by
\[
        \eta_n^{(N)}
        =
        \begin{cases}
        0, & n\in I_0\cup\cdots\cup I_N,\\
        \eta_n, & n\in I_j,\ j>N.
        \end{cases}
\]
Then
\[
        (I-Q_N)\mathcal R_{(\eta)}
        =
        \mathcal R_{(\eta^{(N)})}.
\]
The identity holds on analytic polynomials. Whenever
\(\mathcal R_{(\eta)}\) is bounded, it extends to
\(\mathcal D^p_{p-2}\) by density.
The dyadic weights corresponding
to \(\eta^{(N)}\) are
\[
        w_j^{(N)}
        =
        w_j\mathbf 1_{\{j>N\}},
        \qquad
        W_J^{(N)}
        =
        W_{\max\{J,N+1\}}.
\]
Suppose first that \(q<p\) and that
\(\mathcal R_{(\eta)}\) is bounded. Let \(r\) satisfy
\(1/r=1/q-1/p\), and put
\[
        B^r
        =
        \sum_{J\ge0}(J+1)^{r/q'}W_J^{r/q}.
\]
By the boundedness characterization proved above, \(B<\infty\).
For the truncated weights, put
\[
        B_N^r
        =
        \sum_{J\ge0}
        (J+1)^{r/q'}
        \bigl(W_J^{(N)}\bigr)^{r/q}.
\]
For each fixed \(J\), \(W_J^{(N)}\to0\) as \(N\to\infty\), while
\(0\le W_J^{(N)}\le W_J\). Hence dominated convergence gives
\(B_N\to0\) as \(N\to\infty\). The operator norm estimate already
proved gives
\[
        \|(I-Q_N)\mathcal R_{(\eta)}\|_{
        \mathcal D^p_{p-2}\to\mathcal D^q_\beta}^q
        \lesssim_{p,q,\beta}
        B_N^q
        \longrightarrow0
        \qquad (N\to\infty).
\]
Since \(Q_N\mathcal R_{(\eta)}\) has finite-dimensional range,
\(\mathcal R_{(\eta)}\) is compact. The reverse implication is immediate.

Now suppose that \(p\le q\). Put
\[
        B_J=(J+1)^{q/p'}W_J,
        \qquad\text{and}\qquad
        A_N=\sup_{L\ge0}(L+1)^{q/p'}
        W_{\max\{L,N+1\}}.
\]
Splitting the supremum according as \(L\le N+1\) or \(L>N+1\)
gives
\[
        A_N=\sup_{J\ge N+1}B_J.
\]
If \(B_J\to0\) as \(J\to\infty\), then
\(\sup_{J\ge0}B_J<\infty\), so the boundedness characterization above
gives
\[
\mathcal R_{(\eta)}
\in\mathcal B(\mathcal D^p_{p-2},\mathcal D^q_\beta).
\]
Moreover, \(A_N\to0\) as \(N\to\infty\). Applying the operator norm
estimate to \(\eta^{(N)}\), we obtain
\[
        \|(I-Q_N)\mathcal R_{(\eta)}\|_{
        \mathcal D^p_{p-2}\to\mathcal D^q_\beta}^q
        \lesssim_{p,q,\beta}
        A_N
        \longrightarrow0
        \qquad (N\to\infty).
\]
Thus \(\mathcal R_{(\eta)}\) is compact.

Conversely, suppose that \(\mathcal R_{(\eta)}\) is compact. For \(J\ge2\), let \(\varphi_J\) be as above. Then
\[
        \|\varphi_J\|_{\mathcal D^p_{p-2}}
        \lesssim_p 1,
        \qquad\text{and}\qquad
        \Delta_j(\mathcal R_{(\eta)}\varphi_J)
        =
        J^{1/p'}\Delta_jF_\eta,
        \quad j\ge J.
\]
Moreover, for every \(0<\rho<1\),
\[
        \sup_{|z|\le\rho}|\varphi_J(z)|
        \le C_\rho J^{-1/p},
\]
so \(\varphi_J\to0\) uniformly on compact subsets of \(\D\) as
\(J\to\infty\). Since \((\varphi_J)\) is bounded in the reflexive space
\(\mathcal D^p_{p-2}\), every subsequence has a weakly convergent
subsequence, and the local uniform convergence together with the
continuity of point evaluations forces every weak limit to be zero.
Hence
\[
        \varphi_J\rightharpoonup0
        \qquad\text{in }\mathcal D^p_{p-2}
        \quad (J\to\infty).
\]
If \(\mathcal R_{(\eta)}\) is compact, then
\(\|\mathcal R_{(\eta)}\varphi_J\|_{\mathcal D^q_\beta}\to0\) as
\(J\to\infty\). Using \eqref{eq:target-dirichlet-dyadic} and
\(J+1\asymp J\), we obtain
\[
        B_J
        \lesssim_{p,q,\beta}
        \|\mathcal R_{(\eta)}\varphi_J\|_{\mathcal D^q_\beta}^q
        \longrightarrow0
        \qquad (J\to\infty).
\]
This proves the compactness characterization.

It remains to estimate the essential norm when \(p\le q\). Since
\(Q_N\mathcal R_{(\eta)}\) has finite rank and
\(A_N=\sup_{J\ge N+1}B_J\),
\[
        \|\mathcal R_{(\eta)}\|_e^q
        \le
        \|(I-Q_N)\mathcal R_{(\eta)}\|^q
        \lesssim_{p,q,\beta}
        A_N.
\]
Letting \(N\to\infty\), we obtain
\[
        \|\mathcal R_{(\eta)}\|_e^q
        \lesssim_{p,q,\beta}
        \limsup_{J\to\infty}B_J.
\]

For the reverse estimate, let
\(K:\mathcal D^p_{p-2}\to\mathcal D^q_\beta\) be compact, and choose
\(J_k\to\infty\) such that
\(B_{J_k}\to\limsup_{J\to\infty}B_J\).
Since
\[
        B_{J_k}^{1/q}
        \lesssim_{p,q,\beta}
        \|\mathcal R_{(\eta)}\varphi_{J_k}\|_{\mathcal D^q_\beta},
\]
while
\(\|K\varphi_{J_k}\|_{\mathcal D^q_\beta}\to0\) and
\(\sup_J\|\varphi_J\|_{\mathcal D^p_{p-2}}\lesssim_p1\), we obtain
\[
\begin{aligned}
        \|\mathcal R_{(\eta)}-K\|
        &\gtrsim_p
        \limsup_{k\to\infty}
        \|(\mathcal R_{(\eta)}-K)\varphi_{J_k}\|_{\mathcal D^q_\beta}\\
        &\gtrsim_{p,q,\beta}
        \left(
        \limsup_{J\to\infty}B_J
        \right)^{1/q}.
\end{aligned}
\]
Taking the infimum over compact \(K\) gives the reverse estimate and
completes the proof.
\end{proof}

\begin{remark}
Taking \(q=p\) and \(\beta=p-2\) in
Theorem~\ref{thm:intro-critical}\textup{(ii)} gives the boundedness
and compactness characterizations, together with the essential norm
estimate, for \(\mathcal R_{(\eta)}\) on
\(B^p=\mathcal D^p_{p-2}\). When \(p=2\), orthogonality and
\(n\asymp2^j\) on \(I_j\) give
\(2^j\|\Delta_jF_\eta\|_{H^2}^2
\asymp\sum_{n\in I_j}n|\eta_n|^2\), \(j\ge1\).
Hence, since \(J+1\asymp\log(N+2)\) whenever
\(2^J\le N<2^{J+1}\),
\[
        \sup_{J\ge0}(J+1)W_J(\eta)<\infty
        \quad\Longleftrightarrow\quad
        \sup_{N\ge1}
        \log(N+2)\sum_{n\ge N}n|\eta_n|^2<\infty.
\]
This recovers the known boundedness characterization on the classical
Dirichlet space; see
\cite[Theorem~1.3]{BaoGuoSunWang2024} and
\cite{BlascoGalanopoulosGirela2026}.
\end{remark}

\begin{corollary}
\label{cor:critical-phase-transition}
Let \(1<p,q<\infty\) and \(\beta>-1\). For
\(\mathcal R_{(\eta)}:\mathcal D^p_{p-2}\to\mathcal D^q_\beta\),
the following hold:
\begin{enumerate}[label=\textup{(\roman*)},leftmargin=2.2em]
\item if \(q<p\), every bounded \(\mathcal R_{(\eta)}\) is compact;
\item if \(p\le q\), there exists a bounded noncompact
\(\mathcal R_{(\eta)}\).
\end{enumerate}
\end{corollary}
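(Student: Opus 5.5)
Part (i) is immediate from Theorem~\ref{thm:intro-critical}(i): when \(q<p\) the boundedness and compactness conditions coincide, since each is equivalent to \(G_{p,q,\beta}(\eta)<\infty\). There is nothing further to prove.

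For (ii) we construct a symbol directly. By Theorem~\ref{thm:intro-critical}(ii), we need a sequence \(\eta\) with
\[
\sup_{J\ge0}(J+1)^{q/p'}W_J(\eta)<\infty
\quad\text{and}\quad
\limsup_{J\to\infty}(J+1)^{q/p'}W_J(\eta)>0 .
\]
We take \(F_\eta\) lacunary, \(F_\eta(z)=\sum_{j\ge1}c_jz^{2^j}\) with \(c_j\ge0\). Then \(\Delta_jF_\eta=c_jz^{2^j}\) for \(j\ge1\), so \(\|\Delta_jF_\eta\|_{H^q}=c_j\) and \(w_j=2^{j(q-\beta-1)}c_j^q\).

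We choose \(c_j\) so that \(w_j=j^{-q/p'-1}\) for \(j\ge1\), that is,
\[
c_j=2^{-j(q-\beta-1)/q}\,j^{-(q/p'+1)/q}.
\]
Since \(q/p'>0\), the exponent \(q/p'+1\) exceeds \(1\), and comparison with an integral gives \(W_J\asymp J^{-q/p'}\) for \(J\ge1\). Hence \((J+1)^{q/p'}W_J\asymp1\): this quantity is bounded, so \(\mathcal R_{(\eta)}\) is bounded, and it stays bounded below, so the limit condition for compactness fails.

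We also need \(F_\eta\in\Hol(\D)\). This holds because \(c_j\) grows at most like \(2^{j(\beta+1)/q}\) times a power of \(j\), which is subexponential in the index \(n=2^j\). Therefore \(\limsup_n|\eta_n|^{1/n}\le1\).

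The only mild obstacle is verifying the tail asymptotics, which reduce to the elementary estimate \(\sum_{j\ge J}j^{-1-a}\asymp J^{-a}\) with \(a=q/p'>0\). By the essential norm estimate in Theorem~\ref{thm:intro-critical}(ii), \(\|\mathcal R_{(\eta)}\|_e\gtrsim1\), which again confirms noncompactness.
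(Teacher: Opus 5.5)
Your proof is correct and follows essentially the same approach as the paper. Part (i) is read off from Theorem~\ref{thm:intro-critical}(i). For (ii), the paper also uses a lacunary symbol supported on \(n=2^j\); the only difference is that it takes \(w_j=(j+1)^{-q/p'}-(j+2)^{-q/p'}\), so the tails telescope to \((J+1)^{q/p'}W_J=1\) exactly, while your choice \(w_j=j^{-1-q/p'}\) gives the same conclusion up to constants by integral comparison.
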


\begin{proof}
Part~\textup{(i)} follows from
Theorem~\ref{thm:intro-critical}\textup{(i)}.

Suppose that \(p\le q\), and put \(a=q/p'\). For \(j\ge1\), set
\[
        U_j=(j+1)^{-a},
        \qquad
        v_j=U_j-U_{j+1}>0.
\]
Let \(n_j=2^j\in I_j\), and define
\[
        \eta_{n_j}
        =
        2^{-j(q-\beta-1)/q}v_j^{1/q},
        \qquad
        \eta_n=0
        \quad (n\ne n_j).
\]
Since \(|\eta_{n_j}|^{1/n_j}\to1\), the lacunary series
\[
        F_\eta(z)=\sum_{j\ge1}\eta_{n_j}z^{n_j}
\]
belongs to \(\Hol(\D)\). Moreover,
\[
        2^{j(q-\beta-1)}
        \|\Delta_jF_\eta\|_{H^q}^q
        =
        v_j,
        \qquad j\ge1.
\]
Since
\(\sum_{j\ge J}v_j=U_J\),
\[
        (J+1)^{q/p'}
        \sum_{j\ge J}
        2^{j(q-\beta-1)}
        \|\Delta_jF_\eta\|_{H^q}^q
        =
        1,
        \qquad J\ge1.
\]
Thus \(\mathcal R_{(\eta)}\) is bounded but not compact by
Theorem~\ref{thm:intro-critical}\textup{(ii)}.
\end{proof}

\section{The case
\texorpdfstring{\(\alpha>p-2\)}{alpha > p - 2}}
\label{sec:above-critical}

Throughout this section, \(1<p,q<\infty\), \(\alpha>p-2\), and
\(\beta>-1\). Put
\[
        \delta=\frac{\alpha+2-p}{p}>0,
        \qquad
        s=\frac{\alpha+2}{p}-\frac{\beta+1}{q},
        \qquad
        A_j=2^{js}\|\Delta_jF_\eta\|_{H^q}.
\]

\begin{lemma}
\label{lem:above-critical-prefix}
Let \(f\) be an analytic polynomial, and put
\[
        X_m
        =
        2^{m(1-(\alpha+1)/p)}\|\Delta_mf\|_{H^p},
        \qquad
        Y_j
        =
        \sum_{m\le j}2^{-\delta(j-m)}X_m.
\]
Then
\[
        \|Y\|_{\ell^p}
        \lesssim_{p,\alpha}
        \|f\|_{\mathcal D^p_\alpha},
        \qquad
        2^{j(1-(\beta+1)/q)}
        \|\Delta_j(\mathcal R_{(\eta)}f)\|_{H^q}
        \lesssim_{p,q}
        A_jY_j
\]
for every \(j\ge0\).
\end{lemma}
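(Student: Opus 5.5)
The lemma has two independent estimates, and I would prove them separately.

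\emph{The bound on $\|Y\|_{\ell^p}$.} This is a discrete convolution estimate. Write $Y=X*k$ with $k_i=2^{-\delta i}\mathbf 1_{\{i\ge0\}}$. Since $\delta>0$, $k\in\ell^1$ with $\|k\|_{\ell^1}=(1-2^{-\delta})^{-1}$. Young's inequality then gives
\[
\|Y\|_{\ell^p}\le(1-2^{-\delta})^{-1}\|X\|_{\ell^p}.
\]
Proposition~\ref{prop:dirichlet-dyadic-model} gives $\|X\|_{\ell^p}^p\lesssim_{p,\alpha}\|f\|_{\mathcal D^p_\alpha}^p$, because $p(1-(\alpha+1)/p)=p-\alpha-1$. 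Together these prove the first estimate.

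\emph{The block estimate.} Fix $j$ and use the block decomposition \eqref{eq:rhaly-dyadic-block-decomposition}. It writes $\Delta_j(\mathcal R_{(\eta)}f)$ as a prefix term plus a local term:
\begin{itemize}
\item the prefix term is $\bigl(\sum_{m<j}\sum_{k\in I_m}\widehat f(k)\bigr)\Delta_jF_\eta$;
\item the local term is $\mathcal T_{I_j}(\Delta_jf,\Delta_jF_\eta)$.
\end{itemize}
For the prefix term, Lemma~\ref{lem:block-summation-estimate} gives
\[
\Bigl|\sum_{k\in I_m}\widehat f(k)\Bigr|\lesssim_p 2^{m/p}\|\Delta_mf\|_{H^p}.
\]
For the local term, Lemma~\ref{lem:local-triangular-estimate} with $|I_j|\asymp2^j$ gives the bound $2^{j/p}\|\Delta_jf\|_{H^p}\|\Delta_jF_\eta\|_{H^q}$. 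Together,
\[
\|\Delta_j(\mathcal R_{(\eta)}f)\|_{H^q}\lesssim_{p,q}\|\Delta_jF_\eta\|_{H^q}\sum_{m\le j}2^{m/p}\|\Delta_mf\|_{H^p},
\]
which is exactly \eqref{eq:rhaly-basic-block-estimate}. That derivation did not use $\alpha=p-2$, so it applies here verbatim.

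\emph{Reconciling the exponents.} First rewrite each source term:
\[
2^{m/p}\|\Delta_mf\|_{H^p}=2^{m/p-m(1-(\alpha+1)/p)}X_m=2^{m(\alpha+2-p)/p}X_m=2^{m\delta}X_m.
\]
Multiplying the block bound by $2^{j(1-(\beta+1)/q)}$ then gives
\[
2^{j(1-(\beta+1)/q)}\|\Delta_jF_\eta\|_{H^q}\sum_{m\le j}2^{m\delta}X_m .
\]
Next factor out $2^{j\delta}$, using $2^{m\delta}=2^{j\delta}2^{-\delta(j-m)}$. What remains in front of the sum is
\[
2^{j(1-(\beta+1)/q+\delta)}\|\Delta_jF_\eta\|_{H^q}.
\]
Since $1+\delta=(\alpha+2)/p$, the exponent equals $js$, so this quantity is $A_j$, and the sum becomes $Y_j$. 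This proves the second estimate.

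The step most likely to hide a pitfall is the block $j=0$, where $I_0=\{0,1\}$ and the constant term sits. It is harmless: both sides are finite sums, and the local triangular estimate still holds for the interval $I_0$. Otherwise the proof is routine, since the analytic input is already contained in Lemmas~\ref{lem:block-summation-estimate} and~\ref{lem:local-triangular-estimate}.
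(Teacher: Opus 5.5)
Your proof is correct and is essentially the paper's argument. For the $\ell^p$ bound you use Young's inequality with the summable kernel $(2^{-\delta i})_{i\ge0}$ together with Proposition~\ref{prop:dirichlet-dyadic-model}; for the block bound you use the $\alpha$-independent estimate \eqref{eq:rhaly-basic-block-estimate} with $\sum_{m\le j}2^{m/p}\|\Delta_mf\|_{H^p}=2^{j\delta}Y_j$ and $1-(\beta+1)/q+\delta=s$. The only difference is that you re-derive \eqref{eq:rhaly-basic-block-estimate}, including the harmless $j=0$ block, instead of citing it.
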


\begin{proof}
Since \((2^{-k\delta})_{k\ge0}\in\ell^1\), the discrete Young
inequality and Proposition~\ref{prop:dirichlet-dyadic-model} give
\[
        \|Y\|_{\ell^p}
        \lesssim_{\delta}
        \|X\|_{\ell^p}
        \lesssim_{p,\alpha}
        \|f\|_{\mathcal D^p_\alpha}.
\]
The estimate \eqref{eq:rhaly-basic-block-estimate} was obtained
without using the assumption \(\alpha=p-2\), and therefore remains
valid here. Hence, since
\[
        \sum_{m\le j}2^{m/p}\|\Delta_mf\|_{H^p}
        =
        2^{j\delta}Y_j,
\]
we obtain
\[
        2^{j(1-(\beta+1)/q)}
        \|\Delta_j(\mathcal R_{(\eta)}f)\|_{H^q}
        \lesssim_{p,q}
        2^{j(1-(\beta+1)/q+\delta)}
        \|\Delta_jF_\eta\|_{H^q}Y_j.
\]
Since \(1-(\beta+1)/q+\delta=s\), the last expression is
\(A_jY_j\). This proves the second estimate.
\end{proof}

We use the following standard characterization of diagonal
multipliers between sequence spaces.

\begin{lemma}
\label{lem:diagonal-sequence-multiplier}
Let \(1<p,q<\infty\), and let
\(a=(a_j)_{j\ge0}\) be nonnegative. The diagonal map
\(M_ax=(a_jx_j)_j\) is bounded from \(\ell^p\) into \(\ell^q\) if and
only if \(a\in\ell^r\), where \(1/r=1/q-1/p\), when \(q<p\), and if
and only if \(a\in\ell^\infty\) when \(p\le q\). Moreover,
\[
        \|M_a\|_{\ell^p\to\ell^q}
        =
        \begin{cases}
        \|a\|_{\ell^r},& q<p,\\
        \|a\|_{\ell^\infty},& p\le q.
        \end{cases}
\]
\end{lemma}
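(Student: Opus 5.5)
The statement is Lemma~\ref{lem:diagonal-sequence-multiplier}, the diagonal multiplier characterization. I will treat the two regimes separately. In each, the upper bound comes from H\"older's inequality, and the lower bound from testing on well-chosen sequences.

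\emph{Case \(p\le q\).} For sufficiency I use \(\|x\|_{\ell^q}\le\|x\|_{\ell^p}\), which gives
\[
\|M_ax\|_{\ell^q}\le\|a\|_{\ell^\infty}\|x\|_{\ell^q}\le\|a\|_{\ell^\infty}\|x\|_{\ell^p}.
\]
For necessity I test on the unit vector \(e_j\). This gives \(a_j=\|M_ae_j\|_{\ell^q}\le\|M_a\|\), hence \(\|a\|_{\ell^\infty}\le\|M_a\|\). Together these yield equality of the norms.

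\emph{Case \(q<p\), sufficiency.} Apply H\"older's inequality with exponents \(r/q\) and \(p/q\); these are conjugate, since \(q/r+q/p=1\). This gives
\[
\sum_j a_j^q|x_j|^q\le\Bigl(\sum_j a_j^r\Bigr)^{q/r}\Bigl(\sum_j|x_j|^p\Bigr)^{q/p},
\]
so \(\|M_a\|\le\|a\|_{\ell^r}\).

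\emph{Case \(q<p\), necessity.} First restrict to a finite truncation \(a^{(N)}=(a_0,\dots,a_N,0,\dots)\). Since \(\|M_{a^{(N)}}\|\le\|M_a\|\), it suffices to prove \(\|a^{(N)}\|_{\ell^r}\le\|M_a\|\) and then let \(N\to\infty\) by monotone convergence. Choose the H\"older extremal \(x_j=a_j^{r/p}\) for \(j\le N\), and \(0\) otherwise. Then \(\|x\|_{\ell^p}^p=\sum_{j\le N}a_j^r\). The identity \(1+r/p=r/q\) gives \(a_jx_j=a_j^{r/q}\), so \(\|M_ax\|_{\ell^q}^q=\sum_{j\le N}a_j^r\). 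Hence
\[
\Bigl(\sum_{j\le N}a_j^r\Bigr)^{1/q}\le\|M_a\|\Bigl(\sum_{j\le N}a_j^r\Bigr)^{1/p},
\]
and dividing yields \(\bigl(\sum_{j\le N}a_j^r\bigr)^{1/r}\le\|M_a\|\). If the finite sum vanishes, the inequality is trivial. The equivalence of boundedness with \(a\in\ell^r\) then follows immediately.

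There is no real obstacle here. The only points needing care are the exponent identities \(1/r=1/q-1/p\) and \(r/q=1+r/p\), and the truncation step that avoids assuming \(a\in\ell^r\) in advance.
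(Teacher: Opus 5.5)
Your proposal is correct and follows the paper's proof essentially step for step. Sufficiency for \(q<p\) uses H\"older's inequality with exponents \(r/q\) and \(p/q\); necessity tests on the extremal sequence \(x_j=a_j^{r/p}\) restricted to a finite set (you take initial segments \(\{0,\dots,N\}\), the paper takes an arbitrary finite \(E\)); and for \(p\le q\) you use unit vectors together with \(\ell^p\subset\ell^q\), as the paper does.
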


\begin{proof}
Suppose first that \(q<p\). If \(a\in\ell^r\), where
\(1/r=1/q-1/p\), H\"older's inequality gives
\[
        \|M_ax\|_{\ell^q}
        \le
        \|a\|_{\ell^r}\|x\|_{\ell^p}.
\]
Conversely, assume that \(M_a:\ell^p\to\ell^q\) is bounded. For a
finite set \(E\), set
\(x_j=a_j^{r/p}\mathbf1_E(j)\). Since \(r/q=1+r/p\),
\[
        \left(\sum_{j\in E}a_j^r\right)^{1/q}
        =
        \|M_ax\|_{\ell^q}
        \le
        \|M_a\|
        \left(\sum_{j\in E}a_j^r\right)^{1/p}.
\]
Hence
\[
        \left(\sum_{j\in E}a_j^r\right)^{1/r}
        \le
        \|M_a\|.
\]
Taking the supremum over all finite \(E\) gives
\(a\in\ell^r\) and
\(\|a\|_{\ell^r}\le\|M_a\|\).

If \(p\le q\), testing \(M_a\) on the unit vectors gives
\(\|a\|_{\ell^\infty}\le\|M_a\|\). Conversely,
\[
        \|M_ax\|_{\ell^q}
        \le
        \|a\|_{\ell^\infty}\|x\|_{\ell^q}
        \le
        \|a\|_{\ell^\infty}\|x\|_{\ell^p},
\]
since \(\ell^p\subset\ell^q\). This proves the result.
\end{proof}

\begin{theorem}
\label{thm:above-critical-Rhaly}
Let \(F_\eta\in\Hol(\D)\), and write
\(A_j=2^{js}\|\Delta_jF_\eta\|_{H^q}\), \(j\ge0\).
\begin{enumerate}[label=\textup{(\roman*)},leftmargin=2.2em]

\item If \(q<p\) and \(1/r=1/q-1/p\), then
\[
\begin{aligned}
\mathcal R_{(\eta)}
\in\mathcal B(\mathcal D^p_\alpha,\mathcal D^q_\beta)
&\Longleftrightarrow
(A_j)_{j\ge0}\in\ell^r, \\
\|\mathcal R_{(\eta)}\|_{
\mathcal D^p_\alpha\to\mathcal D^q_\beta}
&\asymp_{p,q,\alpha,\beta}
\|(A_j)\|_{\ell^r}.
\end{aligned}
\]

\item If \(p\le q\), then
\[
\begin{aligned}
\mathcal R_{(\eta)}
\in\mathcal B(\mathcal D^p_\alpha,\mathcal D^q_\beta)
&\Longleftrightarrow
\sup_{j\ge0}A_j<\infty, \\
\|\mathcal R_{(\eta)}\|_{
\mathcal D^p_\alpha\to\mathcal D^q_\beta}
&\asymp_{p,q,\alpha,\beta}
\sup_{j\ge0}A_j.
\end{aligned}
\]

\end{enumerate}
\end{theorem}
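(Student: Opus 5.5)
Sufficiency comes from Lemma~\ref{lem:above-critical-prefix} combined with Lemma~\ref{lem:diagonal-sequence-multiplier}. Necessity comes from testing on block polynomials built from normalized interval averages, exactly as the Introduction announces: alternating dyadic blocks when \(q<p\), and a single block when \(p\le q\).

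\textbf{Sufficiency.} Let \(f\) be an analytic polynomial. By \eqref{eq:target-dirichlet-dyadic} and Lemma~\ref{lem:above-critical-prefix},
\[
\sum_{j\ge0}2^{j(q-\beta-1)}\|\Delta_j(\mathcal R_{(\eta)}f)\|_{H^q}^q
\lesssim_{p,q}\sum_{j\ge0}A_j^qY_j^q=\|M_AY\|_{\ell^q}^q .
\]
Lemma~\ref{lem:diagonal-sequence-multiplier} bounds this by \(\|A\|_{\ell^r}^q\|Y\|_{\ell^p}^q\) when \(q<p\), and by \(\sup_jA_j^q\|Y\|_{\ell^p}^q\) when \(p\le q\). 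In both cases \(\|Y\|_{\ell^p}\lesssim\|f\|_{\mathcal D^p_\alpha}\). The constant term is harmless: \(|\eta_0 f(0)|\le\|\Delta_0F_\eta\|_{H^q}|f(0)|\le A_0\|f\|\), since \(s\cdot0=0\). Density of polynomials then gives the upper bounds.

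\textbf{Necessity for \(p\le q\).} For \(j\ge1\), I test on the single-block function \(f_j=2^{-j\delta}u_{j}\), where \(u_j=|I_j|^{-1}\sum_{k\in I_j}z^k\), or alternatively on \(u_{j-1}\) placed in the block just before \(I_j\). Lemma~\ref{lem:normalized-interval-averages} gives \(\|u_{j-1}\|_{H^p}^p\lesssim2^{-j}\), so by Proposition~\ref{prop:dirichlet-dyadic-model}
\[
\|u_{j-1}\|_{\mathcal D^p_\alpha}^p\lesssim2^{j(p-\alpha-1)}2^{-j}=2^{j(p-\alpha-2)}=2^{-jp\delta}.
\]
The function \(u_{j-1}\) has coefficient sum \(1\) and is supported before \(I_j\), so \(\Delta_j\mathcal R_{(\eta)}u_{j-1}=\Delta_jF_\eta\). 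Hence
\[
2^{j(1-(\beta+1)/q)}\|\Delta_jF_\eta\|_{H^q}\lesssim\|\mathcal R_{(\eta)}\|\,2^{-j\delta},
\]
that is, \(A_j\lesssim\|\mathcal R_{(\eta)}\|\). For \(j=0\), the test function \(f\equiv1\) gives \(A_0\lesssim\|F_\eta\|_{\mathcal D^q_\beta}\le\|\mathcal R_{(\eta)}\|\).

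\textbf{Necessity for \(q<p\).} This is the main obstacle. Summing test functions over many blocks, the prefix sums accumulate from earlier blocks and could cancel. I will take nonnegative \(x_j\), which rules out cancellation, and use only alternating blocks to keep the blocks separated. Fix a parity class, say even \(j\). Put
\[
f_x=\sum_{j\ \mathrm{even}}x_j2^{-j\delta}u_{j-1},
\]
whose blocks \(I_{j-1}\) with \(j\) even are pairwise disjoint and separated. Then \(\|f_x\|_{\mathcal D^p_\alpha}\lesssim\|x\|_{\ell^p}\) by the computation above.

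For \(j\) even, the prefix sum of \(f_x\) at \(n\in I_j\) is the constant \(S_j=\sum_{i\le j,\ i\ \mathrm{even}}x_i2^{-i\delta}\ge x_j2^{-j\delta}\). This uses nonnegativity, and also that \(I_j\) itself carries no mass of \(f_x\), because the mass sits in the odd-indexed blocks \(I_{j-1}\). Therefore \(\Delta_j\mathcal R_{(\eta)}f_x=S_j\Delta_jF_\eta\), and no bounded-variation multiplier is needed. It follows that
\[
\sum_{j\ \mathrm{even}}A_j^qx_j^q\le\sum_{j\ \mathrm{even}}2^{jsq}S_j^q2^{j\delta q}\|\Delta_jF_\eta\|_{H^q}^q\lesssim\|\mathcal R_{(\eta)}\|^q\|x\|_{\ell^p}^q,
\]
where I again use \(s-\delta=1-(\beta+1)/q\) and \eqref{eq:target-dirichlet-dyadic}.

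By the converse part of Lemma~\ref{lem:diagonal-sequence-multiplier}, \(\|(A_j)_{j\ \mathrm{even}}\|_{\ell^r}\lesssim\|\mathcal R_{(\eta)}\|\). The odd indices are handled the same way, using \(u_0\) or the constant for the first block. Adding the two parity classes gives \(\|A\|_{\ell^r}\lesssim\|\mathcal R_{(\eta)}\|\). The main point to check is the disjointness bookkeeping at small \(j\): \(I_0\) must be treated via \(f\equiv1\) separately.
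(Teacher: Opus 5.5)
Your strategy is the paper's. Sufficiency comes from Lemma~\ref{lem:above-critical-prefix} together with Lemma~\ref{lem:diagonal-sequence-multiplier}. Necessity uses normalized averages placed in the block \(I_{j-1}\) just before the target block, summed over one parity class with nonnegative weights when \(q<p\), so that \(I_j\) carries no mass and the prefix sums are constant on it. The sufficiency part and the \(p\le q\) part are correct as you carry them out.

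The \(q<p\) step, however, fails as written because of a sign error in the normalization. You computed yourself that \(\|u_{j-1}\|_{\mathcal D^p_\alpha}\lesssim2^{-j\delta}\). So the normalized block is \(2^{j\delta}u_{j-1}\), which is the paper's \(h_{j-1}=2^{(j-1)\delta}u_{j-1}\), not \(2^{-j\delta}u_{j-1}\). With your \(f_x=\sum x_j2^{-j\delta}u_{j-1}\) you only have \(S_j\ge2^{-j\delta}x_j\), hence \(A_jx_j\le2^{j(s+\delta)}S_j\|\Delta_jF_\eta\|_{H^q}\). On the other hand, \eqref{eq:target-dirichlet-dyadic} controls only \(2^{j(1-(\beta+1)/q)}\|\Delta_j(\mathcal R_{(\eta)}f_x)\|_{H^q}=2^{j(s-\delta)}S_j\|\Delta_jF_\eta\|_{H^q}\). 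The identity \(s-\delta=1-(\beta+1)/q\) that you invoke does not match the exponent \(s+\delta\) in your own display, and the mismatch \(2^{2j\delta}\) is unbounded. Your bound \(\|f_x\|_{\mathcal D^p_\alpha}\lesssim\|x\|_{\ell^p}\) is also lossy, since in fact \(\|f_x\|_{\mathcal D^p_\alpha}\lesssim\|(2^{-2j\delta}x_j)_j\|_{\ell^p}\). As a result, your chain only proves \((2^{-2j\delta}A_j)_j\in\ell^r\), which is far weaker than the claim.

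The repair is to flip the sign and take \(f_x=\sum_j x_j2^{j\delta}u_{j-1}\) over one parity class. Then:
\begin{itemize}
\item \(\|f_x\|_{\mathcal D^p_\alpha}\lesssim\|x\|_{\ell^p}\);
\item \(S_j\ge2^{j\delta}x_j\);
\item \(2^{j(1-(\beta+1)/q)}\|\Delta_j(\mathcal R_{(\eta)}f_x)\|_{H^q}\ge A_jx_j\).
\end{itemize}
The converse half of Lemma~\ref{lem:diagonal-sequence-multiplier} then concludes exactly as you say. Alternatively, keep your \(f_x\), use the correct norm bound, and substitute \(y_j=2^{-2j\delta}x_j\).

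The same sign slip appears in your first suggested test function \(2^{-j\delta}u_j\) for \(p\le q\). It is harmless there only because you then computed with the unscaled \(u_{j-1}\) and rescaled correctly at the end. Note also that a function supported on \(I_j\) itself would not have constant prefix sums on \(I_j\), so \(u_{j-1}\) is the right choice.
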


\begin{proof}
For sufficiency, let \(f\) be an analytic polynomial and let \(X\)
and \(Y\) be as in Lemma~\ref{lem:above-critical-prefix}. By
Proposition~\ref{prop:dirichlet-dyadic-model} and
Lemma~\ref{lem:above-critical-prefix},
\[
        \|\mathcal R_{(\eta)}f\|_{\mathcal D^q_\beta}
        \lesssim_{p,q,\alpha,\beta}
        |\eta_0\widehat f(0)|
        +
        \|(A_jY_j)_j\|_{\ell^q}.
\]
Since \(|\eta_0|\le A_0\),
Lemma~\ref{lem:diagonal-sequence-multiplier} and
\(\|Y\|_{\ell^p}\lesssim_{p,\alpha}
\|f\|_{\mathcal D^p_\alpha}\) give
\begin{equation}
\label{eq:above-critical-sufficiency}
        \|\mathcal R_{(\eta)}f\|_{\mathcal D^q_\beta}
        \lesssim_{p,q,\alpha,\beta}
        \begin{cases}
        \|A\|_{\ell^r}\|f\|_{\mathcal D^p_\alpha},
            & q<p,\\
        \|A\|_{\ell^\infty}\|f\|_{\mathcal D^p_\alpha},
            & p\le q.
        \end{cases}
\end{equation}
By density, \(\mathcal R_{(\eta)}\) extends uniquely to a bounded
operator from \(\mathcal D^p_\alpha\) into \(\mathcal D^q_\beta\),
and the upper operator norm estimate follows.

For necessity, put
\[
        u_m=|I_m|^{-1}\sum_{k\in I_m}z^k,
        \qquad
        h_m=2^{m\delta}u_m,
        \quad m\ge1.
\]
By Lemma~\ref{lem:normalized-interval-averages},
\(\|u_m\|_{H^p}\lesssim_p2^{-m/p}\). Since \(h_m\) is supported on
\(I_m\), Proposition~\ref{prop:dirichlet-dyadic-model} gives
\[
        \|h_m\|_{\mathcal D^p_\alpha}
        \lesssim_{p,\alpha}
        2^{m(1-(\alpha+1)/p+\delta-1/p)}
        \lesssim_{p,\alpha}1,
\]
because
\(1-(\alpha+1)/p+\delta-1/p=0\).

Suppose first that \(q<p\). For
\(\varepsilon\in\{0,1\}\), let
\[
        E_\varepsilon
        =
        \{j\ge2:j\equiv\varepsilon\pmod2\}.
\]
Given a finitely supported nonnegative sequence
\((x_j)_{j\in E_\varepsilon}\), set
\[
        f_x
        =
        \sum_{j\in E_\varepsilon}x_jh_{j-1}.
\]
Since
\(\Delta_{j-1}f_x=x_jh_{j-1}\) for
\(j\in E_\varepsilon\),
Proposition~\ref{prop:dirichlet-dyadic-model} gives
\[
        \|f_x\|_{\mathcal D^p_\alpha}^p
        \asymp_{p,\alpha}
        \sum_{j\in E_\varepsilon}
        2^{(j-1)(p-\alpha-1)}
        x_j^p\|h_{j-1}\|_{H^p}^p.
\]
Since each \(h_{j-1}\) is supported on \(I_{j-1}\),
\[
        2^{(j-1)(p-\alpha-1)}
        \|h_{j-1}\|_{H^p}^p
        \asymp_{p,\alpha}
        \|h_{j-1}\|_{\mathcal D^p_\alpha}^p,
\]
and hence
\[
        \|f_x\|_{\mathcal D^p_\alpha}
        \lesssim_{p,\alpha}
        \|(x_j)\|_{\ell^p(E_\varepsilon)}.
\]

Fix \(j\in E_\varepsilon\). Since the indices in
\(E_\varepsilon\) have the same parity, \(I_{\ell-1}\) lies entirely
before \(I_j\) when \(\ell\le j\) and entirely after \(I_j\) when
\(\ell>j\). Hence, for every \(n\in I_j\),
\[
        \sum_{k=0}^n\widehat f_x(k)
        =
        \sum_{\substack{\ell\in E_\varepsilon\\ \ell\le j}}
        x_\ell2^{(\ell-1)\delta}.
\]
The right hand side is independent of \(n\) and, since
\(x_\ell\ge0\), is at least \(x_j2^{(j-1)\delta}\). Therefore
\[
        \Delta_j(\mathcal R_{(\eta)}f_x)
        =
        \left(
        \sum_{\substack{\ell\in E_\varepsilon\\ \ell\le j}}
        x_\ell2^{(\ell-1)\delta}
        \right)\Delta_jF_\eta,
\]
and hence
\[
        2^{j(1-(\beta+1)/q)}
        \|\Delta_j(\mathcal R_{(\eta)}f_x)\|_{H^q}
        \ge
        2^{-\delta}A_jx_j.
\]
It follows that
\[
\begin{aligned}
        2^{-\delta}
        \|(A_jx_j)_{j\in E_\varepsilon}\|_{\ell^q(E_\varepsilon)}
        &\le
        \left(
        \sum_{j\in E_\varepsilon}
        2^{j(q-\beta-1)}
        \|\Delta_j(\mathcal R_{(\eta)}f_x)\|_{H^q}^q
        \right)^{1/q}\\
        &\lesssim_{q,\beta}
        \|\mathcal R_{(\eta)}f_x\|_{\mathcal D^q_\beta}.
\end{aligned}
\]
Thus
\[
        \|(A_jx_j)_{j\in E_\varepsilon}\|_{\ell^q(E_\varepsilon)}
        \lesssim_{p,q,\alpha,\beta}
        \|\mathcal R_{(\eta)}\|
        \|(x_j)_{j\in E_\varepsilon}\|_{\ell^p(E_\varepsilon)}.
\]
Since \(A_j\ge0\), replacing \(x_j\) by \(|x_j|\) shows that the same
estimate holds for every finitely supported scalar sequence on
\(E_\varepsilon\).
Lemma~\ref{lem:diagonal-sequence-multiplier} gives
\[
        \|(A_j)_{j\in E_\varepsilon}\|_{\ell^r(E_\varepsilon)}
        \lesssim_{p,q,\alpha,\beta}
        \|\mathcal R_{(\eta)}\|.
\]
Applying this estimate to both parity classes,
\[
        \|(A_j)_{j\ge2}\|_{\ell^r}
        \lesssim_{p,q,\alpha,\beta}
        \|\mathcal R_{(\eta)}\|.
\]
Moreover, since \(F_\eta=\mathcal R_{(\eta)}1\),
Proposition~\ref{prop:dirichlet-dyadic-model} gives
\[
        A_0+A_1
        \lesssim_{p,q,\alpha,\beta}
        \|F_\eta\|_{\mathcal D^q_\beta}
        \lesssim_{p,q,\alpha,\beta}
        \|\mathcal R_{(\eta)}\|.
\]
Hence
\begin{equation}
\label{eq:above-critical-lower-ell-r}
        \|A\|_{\ell^r}
        \lesssim_{p,q,\alpha,\beta}
        \|\mathcal R_{(\eta)}\|.
\end{equation}

Now suppose that \(p\le q\). For \(m\ge1\),
\[
        \Delta_{m+1}(\mathcal R_{(\eta)}h_m)
        =
        \left(
        \sum_{k\in I_m}\widehat h_m(k)
        \right)
        \Delta_{m+1}F_\eta
        =
        2^{m\delta}\Delta_{m+1}F_\eta.
\]
Since \(s=1-(\beta+1)/q+\delta\),
Proposition~\ref{prop:dirichlet-dyadic-model} and the uniform
boundedness of \((h_m)\) in \(\mathcal D^p_\alpha\) give
\begin{equation}
\label{eq:above-critical-block-lower}
\begin{aligned}
A_{m+1}
&=
2^\delta
2^{(m+1)(1-(\beta+1)/q)}
\|\Delta_{m+1}(\mathcal R_{(\eta)}h_m)\|_{H^q}\\
&\lesssim_{p,q,\alpha,\beta}
\|\mathcal R_{(\eta)}h_m\|_{\mathcal D^q_\beta}
\lesssim_{p,q,\alpha,\beta}
\|\mathcal R_{(\eta)}\|.
\end{aligned}
\end{equation}
Finally, since \(F_\eta=\mathcal R_{(\eta)}1\),
\[
        A_0+A_1
        \lesssim_{p,q,\alpha,\beta}
        \|F_\eta\|_{\mathcal D^q_\beta}
        \lesssim_{p,q,\alpha,\beta}
        \|\mathcal R_{(\eta)}\|.
\]
Consequently,
\begin{equation}
\label{eq:above-critical-lower-ell-infty}
        \|A\|_{\ell^\infty}
        \lesssim_{p,q,\alpha,\beta}
        \|\mathcal R_{(\eta)}\|.
\end{equation}
Combining \eqref{eq:above-critical-lower-ell-r} and
\eqref{eq:above-critical-lower-ell-infty} gives the lower operator
norm estimates.
\end{proof}

\begin{theorem}
\label{thm:above-critical-compact}
Under the hypotheses and notation of
Theorem~\ref{thm:above-critical-Rhaly}, the following hold.
\begin{enumerate}[label=\textup{(\roman*)},leftmargin=2.2em]

\item If \(q<p\), then
\[
        \mathcal R_{(\eta)}
        \in\mathcal B(\mathcal D^p_\alpha,\mathcal D^q_\beta)
        \Longleftrightarrow
        \mathcal R_{(\eta)}
        \in\mathcal K(\mathcal D^p_\alpha,\mathcal D^q_\beta).
\]

\item If \(p\le q\), then
\[
        \mathcal R_{(\eta)}
        \in\mathcal K(\mathcal D^p_\alpha,\mathcal D^q_\beta)
        \Longleftrightarrow
        \lim_{j\to\infty}A_j=0.
\]
Whenever \(\mathcal R_{(\eta)}\) is bounded,
\[
        \|\mathcal R_{(\eta)}\|_e
        \asymp_{p,q,\alpha,\beta}
        \limsup_{j\to\infty}A_j.
\]

\end{enumerate}
\end{theorem}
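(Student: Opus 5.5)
The plan mirrors the compactness argument of Section~\ref{sec:critical-case}. We truncate the symbol dyadically and use the norm estimates of Theorem~\ref{thm:above-critical-Rhaly}. For \(N\ge0\) let \(Q_N=\sum_{j\le N}\Delta_j\) and define \(\eta^{(N)}\) by \(\eta^{(N)}_n=0\) for \(n\in I_0\cup\dots\cup I_N\) and \(\eta^{(N)}_n=\eta_n\) otherwise. The identity \((I-Q_N)\mathcal R_{(\eta)}=\mathcal R_{(\eta^{(N)})}\) holds on polynomials, and it extends by density whenever \(\mathcal R_{(\eta)}\) is bounded. Moreover \(Q_N\mathcal R_{(\eta)}\) has finite rank. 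The dyadic quantities of \(\eta^{(N)}\) are \(A_j^{(N)}=A_j\mathbf 1_{\{j>N\}}\). Hence Theorem~\ref{thm:above-critical-Rhaly} gives
\[
\|(I-Q_N)\mathcal R_{(\eta)}\|\lesssim_{p,q,\alpha,\beta}
\begin{cases}\|(A_j)_{j>N}\|_{\ell^r},&q<p,\\ \sup_{j>N}A_j,&p\le q.\end{cases}
\]

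For (i), boundedness gives \((A_j)\in\ell^r\) with \(r<\infty\). The tails \(\|(A_j)_{j>N}\|_{\ell^r}\) then tend to \(0\), so \(\mathcal R_{(\eta)}\) is a norm limit of finite-rank operators and is therefore compact. The converse implication is trivial.

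For (ii), first suppose \(A_j\to0\). Then \(\sup_j A_j<\infty\), so \(\mathcal R_{(\eta)}\) is bounded by Theorem~\ref{thm:above-critical-Rhaly}(ii), and the display above shows it is compact. The same display also gives the upper essential norm bound \(\|\mathcal R_{(\eta)}\|_e\le\|(I-Q_N)\mathcal R_{(\eta)}\|\lesssim\sup_{j>N}A_j\), and letting \(N\to\infty\) yields \(\|\mathcal R_{(\eta)}\|_e\lesssim\limsup_jA_j\).

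The lower bound is the main step, and it needs a weakly null test sequence. I would use \(h_m=2^{m\delta}u_m\) from the proof of Theorem~\ref{thm:above-critical-Rhaly}. These are uniformly bounded in \(\mathcal D^p_\alpha\) and satisfy \(\Delta_{m+1}(\mathcal R_{(\eta)}h_m)=2^{m\delta}\Delta_{m+1}F_\eta\), so \eqref{eq:above-critical-block-lower} gives \(A_{m+1}\lesssim\|\mathcal R_{(\eta)}h_m\|_{\mathcal D^q_\beta}\). To see that \(h_m\rightharpoonup0\), note that the Taylor coefficients of \(h_m\) equal \(2^{m\delta}2^{-m}\), supported in \(I_m\). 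Since \(\delta=(\alpha+2-p)/p\) and \(\alpha>-1\) gives \(\delta-1<0\) (indeed \(\delta<1\) iff \(\alpha<2p-2\)), I must handle large \(\alpha\) with care. On \(|z|\le\rho\) one has \(|h_m(z)|\le 2^{m\delta}\rho^{2^m}\to0\) regardless, so \(h_m\to0\) locally uniformly. Reflexivity of \(\mathcal D^p_\alpha\) (\(1<p<\infty\)) together with continuity of point evaluations then gives \(h_m\rightharpoonup0\), exactly as for \(\varphi_J\) in Section~\ref{sec:critical-case}.

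With this in hand, for any compact \(K\) we have \(\|Kh_m\|\to0\), hence
\[
\|\mathcal R_{(\eta)}-K\|\gtrsim\limsup_m\|(\mathcal R_{(\eta)}-K)h_m\|\gtrsim\limsup_m A_{m+1}.
\]
Taking the infimum over \(K\) gives \(\|\mathcal R_{(\eta)}\|_e\gtrsim\limsup_jA_j\). In particular, compactness forces \(A_j\to0\), which completes the equivalence in (ii). The remaining equivalences with \(b^s_{q,\infty}\) and the distance formula stated in Theorem~\ref{thm:intro-above-critical} then follow from Lemma~\ref{lem:little-besov-distance}.
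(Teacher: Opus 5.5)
Your proof is correct and follows the paper's argument. Finite-rank approximation by $Q_N\mathcal R_{(\eta)}$ gives sufficiency and the upper essential norm bound, and the weakly null test functions $h_m$ together with \eqref{eq:above-critical-block-lower} give necessity and the lower bound; the only cosmetic difference is that you bound $\|(I-Q_N)\mathcal R_{(\eta)}\|$ by applying Theorem~\ref{thm:above-critical-Rhaly} to the truncated symbol $\eta^{(N)}$ (as the paper does in the critical case), whereas the paper estimates it here directly from Lemma~\ref{lem:above-critical-prefix} and H\"older, and your aside about $\delta<1$ is unnecessary since $2^{m\delta}\rho^{2^m}\to0$ for every $\alpha$.
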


\begin{proof}
Suppose first that \(q<p\), let \(1/r=1/q-1/p\), and assume that
\(\mathcal R_{(\eta)}\) is bounded. By
Theorem~\ref{thm:above-critical-Rhaly},
\(A=(A_j)\in\ell^r\). For \(N\ge0\), let
\(Q_Nf=\sum_{j=0}^N\Delta_jf\). For analytic polynomials \(f\),
Proposition~\ref{prop:dirichlet-dyadic-model} and
Lemma~\ref{lem:above-critical-prefix} give
\[
\begin{aligned}
        \|(I-Q_N)\mathcal R_{(\eta)}f\|_{\mathcal D^q_\beta}
        &\lesssim_{q,\beta}
        \left(
        \sum_{j>N}
        2^{j(q-\beta-1)}
        \|\Delta_j(\mathcal R_{(\eta)}f)\|_{H^q}^q
        \right)^{1/q}\\
        &\lesssim_{p,q}
        \|(A_jY_j)_{j>N}\|_{\ell^q}\\
        &\le
        \|(A_j)_{j>N}\|_{\ell^r}
        \|(Y_j)_{j>N}\|_{\ell^p}\\
        &\lesssim_{p,\alpha}
        \|(A_j)_{j>N}\|_{\ell^r}
        \|f\|_{\mathcal D^p_\alpha}.
\end{aligned}
\]
By density,
\begin{equation}
\label{eq:above-critical-tail-r}
        \|(I-Q_N)\mathcal R_{(\eta)}\|_{
        \mathcal D^p_\alpha\to\mathcal D^q_\beta}
        \lesssim_{p,q,\alpha,\beta}
        \|(A_j)_{j>N}\|_{\ell^r}
        \longrightarrow0
        \qquad (N\to\infty).
\end{equation}
Since \(Q_N\mathcal R_{(\eta)}\) has finite rank for each \(N\),
\(\mathcal R_{(\eta)}\) is compact. The reverse implication is
immediate.

Now suppose that \(p\le q\). If \(\lim_{j\to\infty}A_j=0\), then
Theorem~\ref{thm:above-critical-Rhaly} first gives the boundedness of
\(\mathcal R_{(\eta)}\). As above, for analytic polynomials \(f\),
\[
\begin{aligned}
        \|(I-Q_N)\mathcal R_{(\eta)}f\|_{\mathcal D^q_\beta}
        &\lesssim_{p,q,\alpha,\beta}
        \|(A_jY_j)_{j>N}\|_{\ell^q}\\
        &\le
        \sup_{j>N}A_j
        \|(Y_j)_{j>N}\|_{\ell^q}\\
        &\le
        \sup_{j>N}A_j
        \|Y\|_{\ell^p}\\
        &\lesssim_{p,\alpha}
        \sup_{j>N}A_j
        \|f\|_{\mathcal D^p_\alpha}.
\end{aligned}
\]
Hence, by density,
\begin{equation}
\label{eq:above-critical-tail-infty}
        \|(I-Q_N)\mathcal R_{(\eta)}\|_{
        \mathcal D^p_\alpha\to\mathcal D^q_\beta}
        \lesssim_{p,q,\alpha,\beta}
        \sup_{j>N}A_j.
\end{equation}
The right hand side tends to zero, and
\(Q_N\mathcal R_{(\eta)}\) has finite rank. Thus
\(\mathcal R_{(\eta)}\) is compact.

For the converse, let \(h_m\) be the test functions used in the proof
of Theorem~\ref{thm:above-critical-Rhaly}. They are uniformly bounded
in \(\mathcal D^p_\alpha\), and, for every \(0<\rho<1\),
\[
        \sup_{|z|\le\rho}|h_m(z)|
        \lesssim_{\rho,p,\alpha}
        2^{m\delta}\rho^{2^m}
        \longrightarrow0
        \qquad (m\to\infty).
\]
Thus \(h_m\to0\) uniformly on compact subsets of \(\D\). Since
\((h_m)\) is bounded in the reflexive space \(\mathcal D^p_\alpha\),
every subsequence has a weakly convergent subsequence. The local
uniform convergence and continuity of point evaluations force every
weak limit to be zero. Hence
\[
        h_m\rightharpoonup0
        \qquad\text{in }\mathcal D^p_\alpha
        \quad (m\to\infty).
\]
If \(\mathcal R_{(\eta)}\) is compact, then
\(\|\mathcal R_{(\eta)}h_m\|_{\mathcal D^q_\beta}\to0\) as
\(m\to\infty\). By \eqref{eq:above-critical-block-lower},
\[
        A_{m+1}
        \lesssim_{p,q,\alpha,\beta}
        \|\mathcal R_{(\eta)}h_m\|_{\mathcal D^q_\beta}
        \longrightarrow0
        \qquad (m\to\infty).
\]
Hence \(A_j\to0\), proving the compactness characterization.

It remains to estimate the essential norm. By
\eqref{eq:above-critical-tail-infty},
\[
        \|\mathcal R_{(\eta)}\|_e
        \le
        \|\mathcal R_{(\eta)}-Q_N\mathcal R_{(\eta)}\|
        \lesssim_{p,q,\alpha,\beta}
        \sup_{j>N}A_j.
\]
Letting \(N\to\infty\), we obtain
\begin{equation}
\label{eq:above-critical-essential-upper}
        \|\mathcal R_{(\eta)}\|_e
        \lesssim_{p,q,\alpha,\beta}
        \limsup_{j\to\infty}A_j.
\end{equation}

For the reverse estimate, let
\(K:\mathcal D^p_\alpha\to\mathcal D^q_\beta\) be compact. Since
\(h_m\rightharpoonup0\),
\(\|Kh_m\|_{\mathcal D^q_\beta}\to0\) as \(m\to\infty\). Using
\eqref{eq:above-critical-block-lower} and the uniform boundedness of
\((h_m)\) in \(\mathcal D^p_\alpha\), we obtain
\[
\begin{aligned}
        A_{m+1}
        &\lesssim_{p,q,\alpha,\beta}
        \|\mathcal R_{(\eta)}h_m\|_{\mathcal D^q_\beta}\\
        &\lesssim_{p,q,\alpha,\beta}
        \|\mathcal R_{(\eta)}-K\|
        +
        \|Kh_m\|_{\mathcal D^q_\beta}.
\end{aligned}
\]
Taking the limit superior as \(m\to\infty\) gives
\[
        \limsup_{j\to\infty}A_j
        \lesssim_{p,q,\alpha,\beta}
        \|\mathcal R_{(\eta)}-K\|.
\]
Taking the infimum over all compact \(K\), we obtain
\begin{equation}
\label{eq:above-critical-essential-lower}
        \limsup_{j\to\infty}A_j
        \lesssim_{p,q,\alpha,\beta}
        \|\mathcal R_{(\eta)}\|_e.
\end{equation}
Combining \eqref{eq:above-critical-essential-upper} and
\eqref{eq:above-critical-essential-lower} gives the essential norm
estimate.
\end{proof}

\begin{proof}[Proof of Theorem~\ref{thm:intro-above-critical}]
The result follows from
Theorems~\ref{thm:above-critical-Rhaly}
and~\ref{thm:above-critical-compact}.
\end{proof}

\begin{corollary}
\label{cor:same-space-classification}
Let \(1<p<\infty\) and \(\alpha>-1\). Then the boundedness and
compactness of \(\mathcal R_{(\eta)}\) on \(\mathcal D^p_\alpha\)
are characterized as follows.
\begin{enumerate}[label=\textup{(\roman*)},leftmargin=2.2em]
\item If \(\alpha>p-2\), then
\[
        \mathcal R_{(\eta)}\in\mathcal B(\mathcal D^p_\alpha)
        \Longleftrightarrow
        F_\eta\in\Lambda^p_{1/p},
\]
and
\[
        \mathcal R_{(\eta)}\in\mathcal K(\mathcal D^p_\alpha)
        \Longleftrightarrow
        F_\eta\in\lambda^p_{1/p}.
\]

\item If \(\alpha=p-2\), then
\[
        \mathcal R_{(\eta)}
        \in\mathcal B(\mathcal D^p_{p-2})
        \Longleftrightarrow
        \sup_{J\ge0}(J+1)^{p-1}
        \sum_{j\ge J}
        2^j\|\Delta_jF_\eta\|_{H^p}^p<\infty,
\]
and
\[
        \mathcal R_{(\eta)}
        \in\mathcal K(\mathcal D^p_{p-2})
        \Longleftrightarrow
        \lim_{J\to\infty}
        (J+1)^{p-1}
        \sum_{j\ge J}
        2^j\|\Delta_jF_\eta\|_{H^p}^p
        =
        0.
\]

\item If \(-1<\alpha<p-2\), then
\[
        \mathcal R_{(\eta)}
        \in\mathcal B(\mathcal D^p_\alpha)
        \Longleftrightarrow
        \mathcal R_{(\eta)}
        \in\mathcal K(\mathcal D^p_\alpha)
        \Longleftrightarrow
        F_\eta\in\mathcal D^p_\alpha.
\]
\end{enumerate}
\end{corollary}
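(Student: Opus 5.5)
This corollary is the specialization \(q=p\), \(\beta=\alpha\) of Theorems~\ref{thm:intro-below-critical}--\ref{thm:intro-above-critical}. The only extra work is to identify the resulting conditions with the spaces named in the statement. We treat the three ranges of \(\alpha\) separately.

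For \textup{(iii)}, \(-1<\alpha<p-2\), we apply Theorem~\ref{thm:intro-below-critical} with \(q=p\) and \(\beta=\alpha\). It gives directly that boundedness, compactness and \(F_\eta\in\mathcal D^p_\alpha\) are equivalent.

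For \textup{(ii)}, \(\alpha=p-2\), we apply Theorem~\ref{thm:intro-critical}\textup{(ii)} with \(q=p\), which is allowed since \(p\le q\), and \(\beta=p-2\). Then \(q-\beta-1=1\), so \(W_J(\eta)=\sum_{j\ge J}2^j\|\Delta_jF_\eta\|_{H^p}^p\). Also \(q/p'=p/p'=p-1\), so \((J+1)^{q/p'}W_J(\eta)=(J+1)^{p-1}W_J(\eta)\). The boundedness criterion \(\sup_J(J+1)^{q/p'}W_J<\infty\) and the compactness criterion \(\lim_J(J+1)^{q/p'}W_J=0\) are then exactly the displayed conditions.

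For \textup{(i)}, \(\alpha>p-2\), we apply Theorem~\ref{thm:intro-above-critical}\textup{(ii)} with \(q=p\) and \(\beta=\alpha\). Then \(s=(\alpha+2)/p-(\alpha+1)/p=1/p\). Boundedness is equivalent to \(\sup_j2^{j/p}\|\Delta_jF_\eta\|_{H^p}<\infty\), and compactness to \(2^{j/p}\|\Delta_jF_\eta\|_{H^p}\to0\). Since \(0<1/p<1\), Proposition~\ref{prop:dyadic-mean-lipschitz} with \(q=p\) and \(s=1/p\) identifies these two conditions with \(F_\eta\in\Lambda^p_{1/p}\) and \(F_\eta\in\lambda^p_{1/p}\), respectively. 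Equivalently, \(B^{1/p}_{p,\infty}=\Lambda^p_{1/p}\) and \(b^{1/p}_{p,\infty}=\lambda^p_{1/p}\).

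No step is a genuine obstacle. The only point needing care is parameter bookkeeping: one must check that \(q=p\) falls into the \(p\le q\) branch of each theorem, and that the exponent \(s=1/p\) lies in \((0,1)\), so that the mean-Lipschitz identification applies.
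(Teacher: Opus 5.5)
Your proposal is correct and matches the paper's proof: specialize Theorems~\ref{thm:intro-below-critical}--\ref{thm:intro-above-critical} to \(q=p\), taking \(\beta=\alpha\) (which is \(\beta=p-2\) in the critical case). In case (i), \(s=1/p\), and Proposition~\ref{prop:dyadic-mean-lipschitz} identifies the dyadic conditions with \(F_\eta\in\Lambda^p_{1/p}\) and \(F_\eta\in\lambda^p_{1/p}\); your parameter checks (\(q-\beta-1=1\), \(q/p'=p-1\), \(0<1/p<1\)) are exactly the bookkeeping needed.
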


\begin{proof}
Part~\textup{(i)} follows from
Theorem~\ref{thm:intro-above-critical}\textup{(ii)} by taking
\(q=p\) and \(\beta=\alpha\), so that \(s=1/p\), and then applying
Proposition~\ref{prop:dyadic-mean-lipschitz}.
Part~\textup{(ii)} follows from
Theorem~\ref{thm:intro-critical}\textup{(ii)} with
\(q=p\) and \(\beta=p-2\).
Part~\textup{(iii)} follows from
Theorem~\ref{thm:intro-below-critical} with
\(q=p\) and \(\beta=\alpha\).
\end{proof}

\section{Weighted Bergman and Hardy space results}
\label{sec:endpoint-regime}

Recall that \(A_a^p=\mathcal D^p_{a+p}\), with equivalent norms,
for \(a>-1\). Hence Theorems~\ref{thm:above-critical-Rhaly} and
\ref{thm:above-critical-compact} give the following characterization
for weighted Bergman spaces.

\begin{corollary}
\label{cor:mixed-Bergman}
Let \(1<p,q<\infty\) and \(a,b>-1\). Put
\(s=(a+2)/p-(b+1)/q\) and
\(A_j=2^{js}\|\Delta_jF_\eta\|_{H^q}\).

\begin{enumerate}[label=\textup{(\roman*)},leftmargin=2.2em]

\item If \(q<p\) and \(1/r=1/q-1/p\), then
\[
        \mathcal R_{(\eta)}\in\mathcal B(A_a^p,A_b^q)
        \Longleftrightarrow
        (A_j)\in\ell^r.
\]
Every bounded such operator is compact, and
\[
        \|\mathcal R_{(\eta)}\|
        \asymp_{p,q,a,b}
        \|(A_j)\|_{\ell^r}.
\]

\item If \(p\le q\), then
\[
        \mathcal R_{(\eta)}\in\mathcal B(A_a^p,A_b^q)
        \Longleftrightarrow
        \sup_{j\ge0}A_j<\infty,
\]
and
\[
        \mathcal R_{(\eta)}\in\mathcal K(A_a^p,A_b^q)
        \Longleftrightarrow
        \lim_{j\to\infty}A_j=0.
\]
Whenever \(\mathcal R_{(\eta)}\) is bounded,
\[
        \|\mathcal R_{(\eta)}\|
        \asymp_{p,q,a,b}
        \sup_{j\ge0}A_j,
        \qquad
        \|\mathcal R_{(\eta)}\|_e
        \asymp_{p,q,a,b}
        \limsup_{j\to\infty}A_j.
\]

\end{enumerate}
\end{corollary}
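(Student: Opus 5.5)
The plan is to reduce Corollary~\ref{cor:mixed-Bergman} to Theorems~\ref{thm:above-critical-Rhaly} and~\ref{thm:above-critical-compact} through the identification \(A^p_a=\mathcal D^p_{a+p}\) and \(A^q_b=\mathcal D^q_{b+q}\), with equivalent norms. First, justify this identification (it is only recalled in the text). By Proposition~\ref{prop:bergman-dyadic-model}, \(\|g\|_{A^p_a}^p\asymp\sum_j2^{-j(a+1)}\|\Delta_jg\|_{H^p}^p\). By Proposition~\ref{prop:dirichlet-dyadic-model} with \(\alpha=a+p\), \(\|g\|_{\mathcal D^p_{a+p}}^p\asymp|g(0)|^p+\sum_j2^{j(p-a-p-1)}\|\Delta_jg\|_{H^p}^p\). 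The weights coincide, and \(|g(0)|\lesssim\|\Delta_0g\|_{H^p}\), so the two norms are equivalent on \(\Hol(\D)\). Hence \(\mathcal B(A^p_a,A^q_b)=\mathcal B(\mathcal D^p_{a+p},\mathcal D^q_{b+q})\) and \(\mathcal K(A^p_a,A^q_b)=\mathcal K(\mathcal D^p_{a+p},\mathcal D^q_{b+q})\). Operator norms and essential norms change only by constants depending on \(p,q,a,b\), since the essential norm is an infimum over the same class of compact operators, rescaled by the equivalence constants.

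Next, check the parameter regime. With \(\alpha=a+p\) we have \(\alpha-(p-2)=a+2>0\), so we are always in the case \(\alpha>p-2\) of Section~\ref{sec:above-critical}. The exponent there is \(s=(\alpha+2)/p-(\beta+1)/q\). With \(\beta=b+q\) this becomes \((a+p+2)/p-(b+q+1)/q=(a+2)/p-(b+1)/q\), which matches the \(s\) in the corollary. Therefore the sequence \(A_j\) defined here is exactly the one in Theorem~\ref{thm:above-critical-Rhaly}.

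Finally, read off the conclusions. Part (i) follows from Theorem~\ref{thm:above-critical-Rhaly}(i) together with Theorem~\ref{thm:above-critical-compact}(i), which says that boundedness implies compactness. Part (ii) follows from Theorem~\ref{thm:above-critical-Rhaly}(ii) and Theorem~\ref{thm:above-critical-compact}(ii), which give the compactness criterion and the essential norm estimate, transported through the norm equivalences. The only point requiring care is the first step: one must make sure the dyadic norm equivalence is genuinely two-sided on all of \(\Hol(\D)\), so that the spaces coincide as sets. This is already ensured by the statements of Propositions~\ref{prop:bergman-dyadic-model} and~\ref{prop:dirichlet-dyadic-model}, so I expect no real obstacle.
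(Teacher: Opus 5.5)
Your proposal is correct and matches the paper's proof, which simply applies Theorems~\ref{thm:above-critical-Rhaly} and~\ref{thm:above-critical-compact} with \(\alpha=a+p\), \(\beta=b+q\) through the identification \(A^p_a=\mathcal D^p_{a+p}\). Your additional check of that identification, matching the weights \(2^{-j(a+1)}\) in Propositions~\ref{prop:bergman-dyadic-model} and~\ref{prop:dirichlet-dyadic-model}, is a valid detail that the paper only recalls.
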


\begin{proof}
Apply Theorems~\ref{thm:above-critical-Rhaly} and
\ref{thm:above-critical-compact} with
\(\alpha=a+p\) and \(\beta=b+q\).
\end{proof}

\begin{remark}
For \(q=p\), Theorems~\ref{thm:above-critical-Rhaly} and
\ref{thm:above-critical-compact}, together with the standard dyadic
characterization of mean-Lipschitz spaces, recover
\cite[Theorem~4]{GalanopoulosGirela2026b} in the range
\[
        p-2<\alpha<\min\{p-1+\beta,2p-2\},
        \qquad
        \alpha-\beta+1>0.
\]
In the unweighted case \(a=b=0\),
Corollary~\ref{cor:mixed-Bergman} gives \(s=2/p-1/q\). Thus, for
\(1<p<q<\infty\) and \(2/p-1/q<1\), this agrees with the exponent
used in the proof of
\cite[Theorem~3]{GalanopoulosGirela2026b}.
\end{remark}

For Hardy spaces, the conditions
\(F_\eta\in\Lambda^p_{1/p}\) and
\(F_\eta\in\lambda^p_{1/p}\) are necessary for boundedness and
compactness of \(\mathcal R_{(\eta)}\) on \(H^p\), respectively, for
all \(1<p<\infty\)
\cite[Theorem~1(ii) and (iv)]{GalanopoulosGirela2026b}.
When \(1<p\le2\), they are also sufficient
\cite[Theorems~1--2]{GalanopoulosGirela2026}.
For \(p>2\), Galanopoulos and Girela proved sufficiency under
\(F_\eta\in\Lambda^r_{1/r}\) and
\(F_\eta\in\lambda^r_{1/r}\), respectively, for some \(2<r<p\)
\cite[Theorems~1--2]{GalanopoulosGirela2026}.
The next theorem shows that the endpoint choice \(r=p\) is not
sufficient in general.

\begin{theorem}
\label{thm:endpoint-failure}
Let \(2<p<\infty\). Then there exists a complex sequence
\(\eta=(\eta_n)_{n\ge0}\) such that
\[
        F_\eta\in H^\infty\cap\lambda^p_{1/p},
        \qquad
        \mathcal R_{(\eta)}\notin\mathcal B(H^p).
\]
\end{theorem}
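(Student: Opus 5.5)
The plan is to take \(F_\eta\) to be a lacunary series and to test \(\mathcal R_{(\eta)}\) on one explicit function in \(H^p\) with positive Taylor coefficients. For such a symbol, \(\mathcal R_{(\eta)}f\) is again lacunary. Its \(H^p\)-norm is then comparable to the \(\ell^2\)-norm of its coefficients, by Zygmund's theorem on lacunary series, or simply bounded below via \(\|g\|_{H^p}\ge\|g\|_{H^2}\). The gain comes from the gap between the exponent \(1/p\), which governs membership of the test function in \(H^p\), and the exponent \(1/2\), which governs the \(\ell^2\)-sum. This gap is available precisely because \(p>2\).

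\emph{Step 1: the symbol.} Put \(n_j=2^j\) for \(j\ge1\), set
\(\eta_{n_j}=2^{-j/p}\bigl(\log(j+2)\bigr)^{-1}\), and set \(\eta_n=0\) otherwise. Since \(\sum_j|\eta_{n_j}|<\infty\), the function \(F_\eta\) has absolutely summable coefficients, so \(F_\eta\in H^\infty\). For the little Lipschitz condition, each block is a single monomial, so \(2^{j/p}\|\Delta_jF_\eta\|_{H^p}=(\log(j+2))^{-1}\to0\). Proposition~\ref{prop:dyadic-mean-lipschitz} with \(s=1/p\) then gives \(F_\eta\in\lambda^p_{1/p}\).

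\emph{Step 2: the test function.} Fix \(\gamma\) with \(1/p<\gamma\le1/2\); this is possible since \(p>2\). Let
\(f(z)=(1-z)^{-1/p}\bigl(\log\tfrac{e}{1-z}\bigr)^{-\gamma}\).
On \(\T\) we have \(|f|^p\asymp|1-z|^{-1}\bigl(\log\tfrac{e}{|1-z|}\bigr)^{-\gamma p}\), which is integrable because \(\gamma p>1\). A standard argument with radial limits then shows \(f\in H^p\).

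The coefficients of \(f\) are eventually positive and satisfy \(\widehat f(n)\asymp n^{1/p-1}(\log n)^{-\gamma}\). This follows from standard Tauberian or Darboux-type asymptotics for \((1-z)^{-a}\log^{b}\), or from the observation that \(f\) is a product of two functions with positive, log-convex coefficient sequences. Hence the partial sums satisfy
\(S_N=\sum_{k\le N}\widehat f(k)\asymp N^{1/p}(\log N)^{-\gamma}\) for large \(N\).

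If establishing eventual positivity turns out to be delicate, the fallback is to replace \(f\) by \(g=\sum_m c_m 2^{m/p}u_m\), where \(u_m\) are the normalized block averages of Lemma~\ref{lem:normalized-interval-averages} and \(c_m=m^{-\gamma}\). For \(p\ge2\), the Littlewood--Paley inequality together with Minkowski's inequality in \(L^{p/2}\) gives \(\|g\|_{H^p}\lesssim(\sum_m c_m^2)^{1/2}\). That is finite when \(\gamma>1/2\), which is not the range needed. So the explicit \(f\) is the primary choice, and the positivity and asymptotics of its coefficients are the main technical point to verify.

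\emph{Step 3: divergence.} For every \(n\), we have \(\widehat{\mathcal R_{(\eta)}f}(n)=\eta_nS_n\). Therefore, formally,
\[
\mathcal R_{(\eta)}f=\sum_{j\ge1}\eta_{2^j}S_{2^j}z^{2^j},
\qquad
|\eta_{2^j}S_{2^j}|\asymp j^{-\gamma}\bigl(\log(j+2)\bigr)^{-1}.
\]
Since \(2\gamma\le1\), we get \(\sum_j j^{-2\gamma}(\log(j+2))^{-2}=\infty\), so this analytic function is not in \(H^2\supset H^p\).

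To turn this into unboundedness on \(H^p\), assume \(\mathcal R_{(\eta)}\) is bounded. Apply it to the Taylor partial sums \(f_N\), which satisfy \(\|f_N\|_{H^p}\lesssim_p\|f\|_{H^p}\) by Lemma~\ref{lem:Hp-interval-projections}. For \(2^j\le N\) the coefficients of \(\mathcal R_{(\eta)}f_N\) agree with those above. Consequently
\[
\|\mathcal R_{(\eta)}f_N\|_{H^p}\ge\|\mathcal R_{(\eta)}f_N\|_{H^2}\ge\Bigl(\sum_{2^j\le N}|\eta_{2^j}S_{2^j}|^2\Bigr)^{1/2}\to\infty,
\]
which contradicts boundedness. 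This step is routine once Step~2 is in place, so the only real obstacle is the coefficient asymptotics for \(f\).
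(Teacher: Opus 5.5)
Your route differs from the paper's in both ingredients, and it works after two small repairs.

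\textbf{Comparison with the paper.} The paper spreads the symbol over whole dyadic blocks with Rudin's \(\pm1\) polynomials, \(\eta_n=2^{-j(1/2+1/p)}(j+1)^{-(1/2-\gamma)}\varepsilon_{j,n}\) for \(n\in I_j\). For this reason \(H^\infty\) membership rests on \(\|\sum_{k\le N}\varepsilon_kz^k\|_{H^\infty}\le5N^{1/2}\). The paper defines its test function by the coefficients \(a_n=n^{1/p-1}(\log n)^{-\gamma}\) and gets \(f\in H^p\) from the Hardy--Littlewood inequality \(\|f\|_{H^p}^p\lesssim\sum_n(n+1)^{p-2}|a_n|^p\), valid for \(p\ge2\). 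Divergence then comes from \(\sum_{n\in I_j}|\eta_nS_n|^2\asymp(j+1)^{-1}\).

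Your lacunary symbol makes the symbol side trivial and removes Rudin's theorem entirely. It even has absolutely summable Taylor coefficients, which the paper's symbol does not, since \(\sum_{n\in I_j}|\eta_n|=2^{j(1/2-1/p)}(j+1)^{-(1/2-\gamma)}\to\infty\). The cost moves to the test function. Your closing argument via \(f_N\) and Lemma~\ref{lem:Hp-interval-projections} is as good as the paper's appeal to continuity of the coefficient functionals.

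\textbf{First repair.} You must take \(\gamma<1/2\) strictly. At \(\gamma=1/2\) your series is \(\sum_j j^{-1}(\log(j+2))^{-2}\), which converges. So the claim ``since \(2\gamma\le1\), the sum diverges'' is false at the endpoint. The interval \(1/p<\gamma<1/2\) is nonempty because \(p>2\).

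\textbf{Second repair.} Your ``product of two functions with positive, log-convex coefficients'' justification is wrong. We have \((\log\frac{e}{1-z})^{-\gamma}=1-\gamma z+\cdots\), and its nonconstant coefficients cannot all be nonnegative: that would force \(h(r)\ge h(0)=1\), whereas \(h(r)\to0\) as \(r\to1^-\). A Tauberian theorem likewise presupposes the positivity you want to derive. Singularity analysis (the Flajolet--Odlyzko transfer) does give \(\widehat f(n)\sim n^{1/p-1}(\log n)^{-\gamma}/\Gamma(1/p)\), so the step survives. The cheapest fix, however, is the paper's: prescribe the coefficients and use Hardy--Littlewood. After that, \(S_N\asymp N^{1/p}(\log N)^{-\gamma}\) is an elementary integral comparison.

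\textbf{Your fallback.} You discarded the block fallback too early. It works for every \(\gamma>1/p\) if you estimate it pointwise instead of via Littlewood--Paley, which loses exactly the gap between \(\ell^p\) and \(\ell^2\) that the example exploits. Concretely, \(|u_m(e^{i\theta})|\lesssim\min\{1,2^{-m}|\theta|^{-1}\}\) gives \(|g(e^{i\theta})|\lesssim M^{-\gamma}2^{M/p}\) for \(2^{-M-1}<|\theta|\le2^{-M}\). Hence \(\|g\|_{H^p}^p\lesssim\sum_M M^{-\gamma p}<\infty\). Meanwhile \(S_{2^j}(g)\asymp j^{-\gamma}2^{j/p}\) is an explicit sum, because each \(u_m\) has coefficient sum \(1\).
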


\begin{proof}
Choose \(1/p<\gamma<1/2\). By
\cite[Theorem~I]{Rudin1959FourierCoefficients}, there exists a
sequence \((\varepsilon_k)_{k\ge1}\subset\{-1,1\}\) such that
\[
        \left\|\sum_{k=1}^{N}\varepsilon_kz^k\right\|_{H^\infty}
        \le 5N^{1/2},
        \qquad N\ge1.
\]
For \(j\ge1\) and \(n\in I_j\), set
\(\varepsilon_{j,n}=\varepsilon_{n-2^j+1}\), and put
\[
        P_j(z)
        =
        \sum_{n\in I_j}\varepsilon_{j,n}z^n
        =
        z^{2^j-1}\sum_{k=1}^{2^j}\varepsilon_kz^k.
\]
Then
\(\|P_j\|_{H^\infty}\le5\cdot2^{j/2}\).
Set \(\eta_0=\eta_1=0\) and, for \(n\in I_j\), \(j\ge1\), define
\[
        \eta_n
        =
        2^{-j(1/2+1/p)}
        (j+1)^{-(1/2-\gamma)}
        \varepsilon_{j,n}.
\]
Then
\[
        \|\Delta_jF_\eta\|_{H^\infty}
        \lesssim
        2^{-j/p}(j+1)^{-(1/2-\gamma)}.
\]
Since the right hand side is summable in \(j\),
\(F_\eta\in H^\infty\). Moreover,
\[
        2^{j/p}\|\Delta_jF_\eta\|_{H^p}
        \lesssim
        (j+1)^{-(1/2-\gamma)}
        \longrightarrow0, \qquad (j\to\infty).
\]
and hence
\(F_\eta\in\lambda^p_{1/p}\) by
Proposition~\ref{prop:dyadic-mean-lipschitz}.

We show that \(\mathcal R_{(\eta)}\) is not bounded on \(H^p\).
Let
\[
        a_n=n^{1/p-1}(\log n)^{-\gamma},
        \qquad n\ge2,
\]
and set
\(f(z)=\sum_{n=2}^\infty a_nz^n\).
Since \(p>2\),
\cite[Theorem~6.2.13]{JevticVukoticArsenovic2016} gives
\(f\in H^p\) whenever
\[
        \sum_{n=2}^{\infty}(n+1)^{p-2}a_n^p<\infty.
\]
Here
\[
        \sum_{n=2}^{\infty}(n+1)^{p-2}a_n^p
        \asymp_p
        \sum_{n=2}^{\infty}
        \frac{1}{n(\log n)^{\gamma p}}
        <\infty,
\]
because \(\gamma p>1\).

Put \(S_n=\sum_{k=2}^n a_k\). 
Comparing the sum with the corresponding integral gives
\[
        S_n
        \asymp
        \int_2^n x^{1/p-1}(\log x)^{-\gamma}\,dx
        \asymp
        n^{1/p}(\log n)^{-\gamma}.
\]
Hence, for \(n\in I_j\),
\[
        |\eta_nS_n|
        \asymp
        2^{-j/2}(j+1)^{-1/2}.
\]
Since \(|I_j|=2^j\),
\[
        \sum_{n\in I_j}|\eta_nS_n|^2
        \asymp
        \frac1{j+1},
\]
and therefore
\[
        \sum_{n=2}^{\infty}|\eta_nS_n|^2=\infty.
\]

Suppose that
\(\mathcal R_{(\eta)}\in\mathcal B(H^p)\), and let
\(f_M(z)=\sum_{k=2}^M a_kz^k\).
Since \(f_M\to f\) in \(H^p\),
\(\mathcal R_{(\eta)}f_M\to\mathcal R_{(\eta)}f\) in \(H^p\).
For each fixed \(n\ge2\), once \(M\ge n\),
\[
        \widehat{\mathcal R_{(\eta)}f_M}(n)
        =
        \eta_n\sum_{k=2}^n a_k
        =
        \eta_nS_n.
\]
Since the \(n\)-th Taylor coefficient functional is continuous on
\(H^p\), passing to the limit gives
\[
        \widehat{\mathcal R_{(\eta)}f}(n)
        =
        \eta_nS_n,
        \qquad n\ge2.
\]
Since \((\eta_nS_n)_{n\ge2}\notin\ell^2\),
\(\mathcal R_{(\eta)}f\notin H^2\), contradicting
\(\mathcal R_{(\eta)}f\in H^p\subset H^2\).
\end{proof}

A complete characterization of boundedness and compactness on
\(H^p\), \(p>2\), for arbitrary complex symbols remains open.

\section{Ces\`aro-type operators induced by positive measures}
\label{sec:cesaro-type}

Recall that, for a finite Borel measure \(\mu\) on \([0,1)\),
\(C_\mu=\mathcal R_{(\mu_n)}\), where
\(
        \mu_n=\int_{[0,1)}t^n\,d\mu(t).
\)
If \(\mu\) is positive, then \((\mu_n)\) is nonnegative and
nonincreasing.
Consequently, the boundedness and compactness characterizations obtained
above for \(\mathcal R_{(\eta)}\), applied to
\(C_\mu=\mathcal R_{(\mu_n)}\), can be expressed directly in terms of
the moment sequence \((\mu_n)\).
The coefficient estimates used below are collected in
Appendix~\ref{app:monotone-reductions}.

\begin{corollary}
\label{cor:cesaro-positive-moment}
Let \(\mu\) be a finite positive Borel measure on \([0,1)\), put
\(\mu_n=\int_{[0,1)}t^n\,d\mu(t)\), and let \(1<p<\infty\). Then the
following hold:

\begin{enumerate}[label=\textup{(\roman*)},leftmargin=2.2em]

\item On each of \(H^p\), \(A_a^p\) with \(a>-1\), and
\(\mathcal D^p_\alpha\) with \(\alpha>p-2\), the operator \(C_\mu\) is
bounded if and only if \(\mu_n=O((n+1)^{-1})\), and compact if and only
if \(\mu_n=o((n+1)^{-1})\).

\item Let \(1<q<\infty\) and \(\beta>-1\). For
\(\mathcal D^p_{p-2}\), we have:
\begin{enumerate}[label=\textup{(\alph*)},leftmargin=2.0em]

\item If \(q<p\) and \(1/r=1/q-1/p\), then
\[
\begin{aligned}
C_\mu\in
\mathcal B(\mathcal D^p_{p-2},\mathcal D^q_\beta)
&\Longleftrightarrow
C_\mu\in
\mathcal K(\mathcal D^p_{p-2},\mathcal D^q_\beta) \\
&\Longleftrightarrow
\sum_{J\ge1}(J+1)^{r/q'}
\left(
\sum_{n\ge2^J}
n^{2q-\beta-3}\mu_n^q
\right)^{r/q}<\infty \\
&\Longleftrightarrow
\sum_{N=2}^{\infty}
\frac{(\log N)^{r/q'}}{N}
\left(
\sum_{n\ge N}
n^{2q-\beta-3}\mu_n^q
\right)^{r/q}<\infty.
\end{aligned}
\]

\item If \(p\le q\), then
\[
\begin{aligned}
C_\mu\in
\mathcal B(\mathcal D^p_{p-2},\mathcal D^q_\beta)
&\Longleftrightarrow
\sup_{J\ge1}(J+1)^{q/p'}
\sum_{n\ge2^J}
n^{2q-\beta-3}\mu_n^q<\infty \\
&\Longleftrightarrow
\sup_{N\ge2}
(\log N)^{q/p'}
\sum_{n\ge N}
n^{2q-\beta-3}\mu_n^q<\infty,
\end{aligned}
\]
and
\[
\begin{aligned}
C_\mu\in
\mathcal K(\mathcal D^p_{p-2},\mathcal D^q_\beta)
&\Longleftrightarrow
\lim_{J\to\infty}(J+1)^{q/p'}
\sum_{n\ge2^J}
n^{2q-\beta-3}\mu_n^q=0 \\
&\Longleftrightarrow
\lim_{N\to\infty}
(\log N)^{q/p'}
\sum_{n\ge N}
n^{2q-\beta-3}\mu_n^q=0.
\end{aligned}
\]
Whenever \(C_\mu\) is bounded,
\[
\begin{aligned}
\|C_\mu\|_e^q
&\asymp_{p,q,\beta}
\limsup_{J\to\infty}(J+1)^{q/p'}
\sum_{n\ge2^J}
n^{2q-\beta-3}\mu_n^q \\
&\asymp_{p,q,\beta}
\limsup_{N\to\infty}
(\log N)^{q/p'}
\sum_{n\ge N}
n^{2q-\beta-3}\mu_n^q.
\end{aligned}
\]

\end{enumerate}

\item Let \(1<q<\infty\), \(\beta>-1\), and
\(-1<\alpha<p-2\). Then
\[
\begin{aligned}
        C_\mu\in
        \mathcal B(\mathcal D^p_\alpha,\mathcal D^q_\beta)
        &\Longleftrightarrow
        C_\mu\in
        \mathcal K(\mathcal D^p_\alpha,\mathcal D^q_\beta) \\
        &\Longleftrightarrow
        F_\mu\in\mathcal D^q_\beta \\
        &\Longleftrightarrow
        \sum_{n=1}^{\infty}
        n^{2q-\beta-3}\mu_n^q<\infty.
\end{aligned}
\]

\item Let \(1<q<\infty\), \(\beta>-1\), \(\alpha>p-2\), and put
\(s=(\alpha+2)/p-(\beta+1)/q\).
\begin{enumerate}[label=\textup{(\alph*)},leftmargin=2.0em]

\item If \(q<p\) and \(1/r=1/q-1/p\), then
\[
\begin{aligned}
        C_\mu\in
        \mathcal B(\mathcal D^p_\alpha,\mathcal D^q_\beta)
        &\Longleftrightarrow
        C_\mu\in
        \mathcal K(\mathcal D^p_\alpha,\mathcal D^q_\beta) \\
        &\Longleftrightarrow
        \sum_{j\ge0}
        \left(
        \sum_{n=2^j}^{2^{j+1}-1}
        n^{q+sq-2}\mu_n^q
        \right)^{r/q}<\infty.
\end{aligned}
\]

\item If \(p\le q\), then
\[
        C_\mu\in
        \mathcal B(\mathcal D^p_\alpha,\mathcal D^q_\beta)
        \Longleftrightarrow
        \sup_{j\ge0}
        \sum_{n=2^j}^{2^{j+1}-1}
        n^{q+sq-2}\mu_n^q<\infty,
\]
and
\[
        C_\mu\in
        \mathcal K(\mathcal D^p_\alpha,\mathcal D^q_\beta)
        \Longleftrightarrow
        \lim_{j\to\infty}
        \sum_{n=2^j}^{2^{j+1}-1}
        n^{q+sq-2}\mu_n^q
        =0.
\]
Whenever \(C_\mu\) is bounded,
\[
        \|C_\mu\|_e^q
        \asymp_{p,q,\alpha,\beta}
        \limsup_{j\to\infty}
        \sum_{n=2^j}^{2^{j+1}-1}
        n^{q+sq-2}\mu_n^q.
\]

\end{enumerate}

\end{enumerate}
\end{corollary}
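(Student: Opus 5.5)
The plan is to obtain every part from Theorems~\ref{thm:intro-below-critical}--\ref{thm:intro-above-critical}, Corollary~\ref{cor:mixed-Bergman} and the known Hardy space results. The only real work is to rewrite the dyadic quantities \(\|\Delta_jF_\mu\|_{H^q}\) in terms of the moments, using that \((\mu_n)\) is nonnegative and nonincreasing. The key coefficient estimate, which I would prove first as an appendix lemma, is
\[
\|\Delta_jF_\mu\|_{H^q}^q\asymp_q 2^{j(q-1)}\mu_{2^j}^q\asymp_q\sum_{n\in I_j}n^{q-2}\mu_n^q
\qquad(j\ge1).
\]
For the upper bound, Abel summation writes \(\Delta_jF_\mu\) as a combination of Dirichlet-type kernels \(z^{M}D_N\) with nonnegative coefficients \(\mu_n-\mu_{n+1}\) plus a boundary term \(\mu_{2^{j+1}-1}D_{2^j}\). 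Using \(\|D_N\|_{H^q}\asymp N^{1-1/q}\), this gives \(\lesssim 2^{j(1-1/q)}\mu_{2^j}\).

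The lower bound is the step I expect to be the main obstacle, because monotonicity alone does not prevent \(\mu\) from dropping sharply inside \(I_j\). I would first try Lemma~\ref{lem:block-summation-estimate}, which gives \(\sum_{n\in I_j}\mu_n\lesssim 2^{j/q}\|\Delta_jF_\mu\|_{H^q}\). This controls \(2^{j}\mu_{2^{j+1}-1}\), hence the block of index \(j+1\) from below. The blocks are therefore only comparable up to a shift by one, i.e. \(2^{j(1-1/q)}\mu_{2^{j+1}}\lesssim\|\Delta_jF_\mu\|_{H^q}\lesssim 2^{j(1-1/q)}\mu_{2^j}\). Since all the criteria are stable under shifting \(j\) by one (weights \(2^{js}\), \((J+1)^{a}\), tail sums, \(\ell^r\) norms, limsups), this sandwich suffices. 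Monotonicity then converts \(2^{j(q-1)}\mu_{2^j}^q\) into \(\sum_{n\in I_{j-1}}n^{q-2}\mu_n^q\), again up to a shift.

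With the sandwich in hand, the parts follow by substitution. In (iii) and (iv), the quantities \(\|F_\mu\|_{\mathcal D^q_\beta}^q\) and \(A_j^q\) become \(\sum_n n^{2q-\beta-3}\mu_n^q\) and \(\sum_{n\in I_j}n^{q+sq-2}\mu_n^q\), because \(2^{j(q-\beta-1)}n^{q-2}\asymp n^{2q-\beta-3}\) and \(2^{jsq}n^{q-2}\asymp n^{q+sq-2}\) on \(I_j\). In (ii), \(W_J\) becomes \(\sum_{n\ge 2^J}n^{2q-\beta-3}\mu_n^q\). The passage from the \(J\) form to the \(\log N\) form is dyadic condensation: \(\log N\asymp J+1\) for \(N\in I_J\), and \(N\mapsto\sum_{n\ge N}\) is monotone, so \(\sum_{N\in I_J}N^{-1}(\cdots)\) is comparable to the values at \(N=2^J\) and \(N=2^{J+1}\). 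The essential norm statements follow from the limsup formulas in the theorems.

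For (i), on \(\mathcal D^p_\alpha\) with \(\alpha>p-2\) and on \(A^p_a=\mathcal D^p_{a+p}\), Theorem~\ref{thm:intro-above-critical}(ii) with \(q=p\), \(\beta=\alpha\) gives \(s=1/p\). The condition then reads \(\sup_j 2^{j}\mu_{2^j}<\infty\), up to shifts. By monotonicity this is \(\mu_n=O(1/n)\), and the little-\(o\) version gives compactness. For \(H^p\), I would combine the necessity of \(F_\mu\in\Lambda^p_{1/p}\) from \cite{GalanopoulosGirela2026b}, which is the same block condition, with sufficiency from the known positive-measure results \cite{GalanopoulosGirelaMerchan2022}. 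Alternatively, sufficiency follows by comparing \(C_\mu\) with a multiple of the Cesàro operator when \(\mu_n\le C/(n+1)\): using positivity, \(C_\mu f\) is then dominated coefficientwise through a Hardy-type integral representation \(C_\mu f(z)=\int_{[0,1)}f(tz)/(1-tz)\,d\mu(t)\).
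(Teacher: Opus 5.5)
Your proof is correct, and its overall structure matches the paper's. Both read off (i)--(iv) from Theorems~\ref{thm:intro-below-critical}--\ref{thm:intro-above-critical} and Corollary~\ref{cor:mixed-Bergman}, using \cite{GalanopoulosGirelaMerchan2022} for \(H^p\). Both then convert dyadic block norms of \(F_\mu\) into moment conditions via comparisons valid only up to a shift of the block index, which every criterion tolerates.

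The genuine difference is the key coefficient lemma.
\begin{itemize}
\item The paper (Lemmas~\ref{lem:app-monotone-lambda}--\ref{lem:app-positive-measure-reductions}) applies the Hardy--Littlewood theorem for monotone coefficients \cite[Theorem~6.2.14]{JevticVukoticArsenovic2016} to each block. This gives the two-sided formula \(\|\Delta_jF\|_{H^q}^q\asymp\sum_{m<2^j}(m+1)^{q-2}\mu_{2^j+m}^q\). It then compares this with \(\sum_{n\in I_j}n^{q-2}\mu_n^q\) through a case split \(q\le2\) versus \(q>2\) and a shift by one block.
\item You avoid that theorem. Abel summation against the nonnegative differences \(\mu_n-\mu_{n+1}\), together with \(\|D_N\|_{H^q}\asymp N^{1-1/q}\), gives the upper bound. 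Lemma~\ref{lem:block-summation-estimate} plus monotonicity gives the lower bound. The result is \(2^{j(1-1/q)}\mu_{2^{j+1}}\lesssim\|\Delta_jF_\mu\|_{H^q}\lesssim2^{j(1-1/q)}\mu_{2^j}\).
\end{itemize}
Your route is more elementary and self-contained: it uses only tools already in Section~\ref{sec:preliminaries} and needs no case split in \(q\). The price is a coarser comparison, with a shift by one block in each direction once you pass to \(\sum_{n\in I_j}n^{q-2}\mu_n^q\). That is harmless here. The paper's route yields a genuine block-by-block norm formula, which is more than the corollary needs.

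Three small points to fix.
\begin{itemize}
\item Your first displayed ``key estimate'' is false as an unshifted two-sided equivalence, as you essentially note yourself. For \(\mu=\delta_{1/2}\), \(\|\Delta_jF_\mu\|_{H^q}\asymp2^{-2^j}\), while \(2^{j(1-1/q)}\mu_{2^j}=2^{j(1-1/q)}2^{-2^j}\). State the lemma directly in the shifted sandwich form you actually use.
\item The block \(j=0\), where \(\|\Delta_0F_\mu\|_{H^q}\asymp\mu_0\), must be handled separately using \(\mu_0<\infty\).
\item Drop the alternative \(H^p\) sufficiency argument. Coefficientwise domination \(|\widehat{C_\mu f}(n)|\le C|\widehat{\mathcal Cf}(n)|\), where \(\mathcal C\) is the classical Ces\`aro operator, does not control \(H^p\) norms for \(p\ne2\). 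It is the Carleson-measure argument of \cite{GalanopoulosGirelaMerchan2022} that carries that case.
\end{itemize}
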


\begin{proof}
For positive \(\mu\), the sequence \((\mu_n)\) is nonnegative and
nonincreasing, and \(\mu_0=\mu([0,1))<\infty\).

For \(H^p\), part~\textup{(i)} follows from
\cite[Theorems~1--2 and Lemma~2]{GalanopoulosGirelaMerchan2022}.
For \(A_a^p\), it follows from
Corollary~\ref{cor:mixed-Bergman} together with
Lemma~\ref{lem:app-monotone-lambda}. For
\(\mathcal D^p_\alpha\), \(\alpha>p-2\), it follows from
Corollary~\ref{cor:same-space-classification} and
Lemma~\ref{lem:app-monotone-lambda}.

Part~\textup{(ii)} follows from
Theorem~\ref{thm:intro-critical} and
Lemma~\ref{lem:app-positive-measure-reductions}\textup{(i)}.
Since \(\mu_0<\infty\), the terms \(\mu_0^r\) and \(\mu_0^q\) may be
omitted from the finiteness conditions. The limit superior equivalences in
Lemma~\ref{lem:app-positive-measure-reductions}\textup{(i)(b)}
do not involve \(\mu_0\).

Part~\textup{(iii)} follows from
Theorem~\ref{thm:intro-below-critical} and
Lemma~\ref{lem:app-positive-monotone-coefficients}, applied with
\((q,\beta)\).

Part~\textup{(iv)} follows from
Theorem~\ref{thm:intro-above-critical} and
Lemma~\ref{lem:app-positive-measure-reductions}\textup{(ii)}.
Since \(\mu_0<\infty\), the term \(\mu_0^r\) in
Lemma~\ref{lem:app-positive-measure-reductions}\textup{(ii)(a)}
and the term \(\mu_0\) in
Lemma~\ref{lem:app-positive-measure-reductions}\textup{(ii)(b)}
do not affect the boundedness conditions, while the limit superior in
Lemma~\ref{lem:app-positive-measure-reductions}\textup{(ii)(b)}
contains no term involving \(\mu_0\).
\end{proof}

\begin{remark}
In the special case of positive measures, equal Bergman weights,
and \(q<p\), the boundedness assertion in
Corollary~\ref{cor:mixed-Bergman}\textup{(i)} is also contained in
\cite[Theorem~1.2]{PanTongYang2026}, with kernel order \(1\), where
an equivalent condition is formulated in terms of the tail function
of the inducing measure.
\end{remark}

\begin{remark}
\label{rem:dirichlet-cesaro-comparison}
For the analytic Besov space \(B^p=\mathcal D^p_{p-2}\),
Sun, Ye and Zhou \cite{SunYeZhou2025} characterized boundedness and
compactness of \(C_\mu:B^p\to X\) for Banach spaces \(X\) satisfying
\(\Lambda_{1/s}^s\subset X\subset\mathcal B\), \(s>1\). They left open
the characterization of boundedness and compactness of
\(C_\mu:B^p\to B^p\), \(p>1\), a problem later noted again by Tang
\cite{Tang2025Besov}. 
Taking \(q=p\) and \(\beta=p-2\) in
Corollary~\ref{cor:cesaro-positive-moment}\textup{(ii)} answers this
question explicitly. Set
\[
        L_N=(\log N)^{p-1}\sum_{n\ge N}n^{p-1}\mu_n^p,
        \qquad N\ge2.
\]
Then
\[
\begin{aligned}
        C_\mu:B^p\to B^p \text{ is bounded}
        &\iff \sup_{N\ge2}L_N<\infty,\\
        C_\mu:B^p\to B^p \text{ is compact}
        &\iff L_N\to0,
\end{aligned}
\qquad
        \|C_\mu\|_e^p
        \asymp
        \limsup_{N\to\infty}L_N.
\]
For \(p=2\), these conditions reduce to the known tail conditions for
the classical Dirichlet space; see
\cite{BaoGuoSunWang2024,BlascoGalanopoulosGirela2026}. In the range
\(p>\alpha+2\), Xie, Liu and Lin \cite{XieLiuLin2026} studied
Ces\`aro-type operators induced by positive measures from
\(\mathcal D^p_\alpha\) into \(\mathcal D^q_\beta\); see
\cite[Theorem~1.2]{LinLiuTangXie2026} for the corrected
characterization. Corollary~\ref{cor:cesaro-positive-moment}
\textup{(iii)} recovers this corrected characterization in the
equivalent form \(F_\mu\in\mathcal D^q_\beta\), while
part~\textup{(iv)} gives the characterization for \(\alpha>p-2\) in
terms of dyadic coefficient blocks.
\end{remark}

\appendix
\section{Monotone coefficient estimates}
\label{app:monotone-reductions}

We collect the coefficient estimates for nonnegative nonincreasing
sequences used in Section~\ref{sec:cesaro-type}.

The first equivalence in the following lemma is
\cite[Proposition~A]{GalanopoulosGirela2026}. We include a short
proof which also gives the characterization of \(\lambda^p_{1/p}\).

\begin{lemma}
\label{lem:app-monotone-lambda}
Let \(1<p<\infty\), and let
\(F(z)=\sum_{n\ge0}a_nz^n\), where \((a_n)\) is nonnegative and
nonincreasing. Then
\[
\begin{aligned}
F\in\Lambda^p_{1/p}
&\Longleftrightarrow
a_n=O((n+1)^{-1}), \\
F\in\lambda^p_{1/p}
&\Longleftrightarrow
a_n=o((n+1)^{-1}).
\end{aligned}
\]
\end{lemma}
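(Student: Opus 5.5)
By Proposition~\ref{prop:dyadic-mean-lipschitz} with \(s=1/p\), it suffices to prove the two dyadic statements
\[
\sup_{j}2^{j/p}\|\Delta_jF\|_{H^p}<\infty
\iff a_n=O((n+1)^{-1}),
\qquad
2^{j/p}\|\Delta_jF\|_{H^p}\to0
\iff a_n=o((n+1)^{-1}).
\]
The idea is to trap \(\|\Delta_jF\|_{H^p}\) between the two endpoint coefficients of the block:
\[
a_{2^{j+1}-1}\,2^{j(1-1/p)}
\;\lesssim_p\;
\|\Delta_jF\|_{H^p}
\;\lesssim_p\;
a_{2^j}\,2^{j(1-1/p)},
\qquad j\ge1 .
\]
Multiplying by \(2^{j/p}\) then gives \(2^j a_{2^{j+1}-1}\lesssim 2^{j/p}\|\Delta_jF\|_{H^p}\lesssim 2^j a_{2^j}\).

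For the upper bound, write \(\Delta_jF\) by Abel summation in terms of partial Dirichlet-type sums. For \(n\in I_j\), put \(b_n=a_n-a_{n+1}\ge0\) for \(n<2^{j+1}-1\), and \(b_{2^{j+1}-1}=a_{2^{j+1}-1}\). Then
\[
\Delta_jF=\sum_{m\in I_j}b_m\sum_{n=2^j}^{m}z^n .
\]
Each inner sum is \(z^{2^j}D_{m-2^j+1}\), so its \(H^p\) norm is \(\lesssim_p 2^{j(1-1/p)}\) by the Dirichlet kernel estimate used in Lemma~\ref{lem:normalized-interval-averages}. The \(b_m\) are nonnegative and telescope, with \(\sum_m b_m=a_{2^j}\). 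Minkowski's inequality then gives the upper bound. (Equivalently, Lemma~\ref{lem:bv-multiplier} can be applied to the monotone multiplier \(a_n/a_{2^j}\) on \(I_j\), acting on \(z^{2^j}D_{2^j}\).)

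For the lower bound, use Lemma~\ref{lem:block-summation-estimate} with \(u=\Delta_jF\):
\[
2^j a_{2^{j+1}-1}\le\sum_{n\in I_j}a_n\le C_p2^{j/p}\|\Delta_jF\|_{H^p}.
\]
This is exactly \(a_{2^{j+1}-1}2^{j(1-1/p)}\lesssim\|\Delta_jF\|_{H^p}\).

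It remains to pass between these dyadic bounds and the pointwise conditions, and here monotonicity does the work. If \(a_n\le C/(n+1)\), then \(2^ja_{2^j}\le C\), which gives boundedness; the same holds with \(o\) replacing \(O\). Conversely, suppose \(2^ja_{2^{j+1}-1}\le C\), or \(\to0\). For \(n\in I_{j+1}\), monotonicity gives \(a_n\le a_{2^{j+1}-1}\), while \(n+1\le 2^{j+2}\). Hence \((n+1)a_n\le 4\cdot2^ja_{2^{j+1}-1}\), and the \(O\) (respectively \(o\)) conclusion follows. The finitely many initial terms are harmless.

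The only mildly delicate point is the upper estimate: one needs the monotone weights to cost nothing beyond \(a_{2^j}\). The Abel summation argument above handles this directly, so I expect no real obstacle.
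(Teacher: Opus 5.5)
Your proof is correct, but it reaches the key block estimate by a different route from the paper. The paper writes \(\Delta_jF=z^N\sum_{m=0}^{N-1}a_{N+m}z^m\) with \(N=2^j\). It then invokes the Hardy--Littlewood theorem for monotone coefficients, in finite polynomial form, to get the exact equivalence \(\|\Delta_jF\|_{H^p}^p\asymp_p\sum_{m=0}^{N-1}(m+1)^{p-2}a_{N+m}^p\). Bounding the coefficients above by \(a_N\) and below by \(a_{2N}\), and using \(\sum_{m<N}(m+1)^{p-2}\asymp_pN^{p-1}\), it obtains \(Na_{2N}\lesssim_pN^{1/p}\|\Delta_jF\|_{H^p}\lesssim_pNa_N\).

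You get the same sandwich, with the slightly sharper \(a_{2N-1}\) on the left, using only tools from Section~\ref{sec:preliminaries}. For the upper bound you use Abel summation against the nonnegative differences \(a_m-a_{m+1}\) together with \(\|D_M\|_{H^p}\lesssim_pM^{1-1/p}\). For the lower bound you use the duality estimate of Lemma~\ref{lem:block-summation-estimate}. The passage from the dyadic quantities to \(a_n=O((n+1)^{-1})\), respectively \(o((n+1)^{-1})\), via Proposition~\ref{prop:dyadic-mean-lipschitz} and monotonicity is the same as in the paper.

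Your version is more elementary and self-contained, since it avoids the external monotone-coefficient theorem. The paper's version yields the finer weighted \(\ell^p\) description of \(\|\Delta_jF\|_{H^p}\), which it reuses in Lemma~\ref{lem:app-dyadic-weighted-condensation}. Your endpoint sandwich would in fact also suffice for the comparisons \(A_j\lesssim R_{j-1}+R_j\) and \(R_j\lesssim A_{j-1}+A_j\) proved there.
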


\begin{proof}
Let \(N=2^j\), \(j\ge1\). Writing
\(
        \Delta_jF(z)
        =
        z^N\sum_{m=0}^{N-1}a_{N+m}z^m,
\) and applying
\cite[Theorem~6.2.14]{JevticVukoticArsenovic2016} in its finite
polynomial form, we obtain
\[
        \|\Delta_jF\|_{H^p}^p
        \asymp_p
        \sum_{m=0}^{N-1}(m+1)^{p-2}a_{N+m}^p.
\]
Since \((a_n)\) is nonincreasing and
\(\sum_{m=0}^{N-1}(m+1)^{p-2}\asymp_pN^{p-1}\),
\[
        Na_{2N}
        \lesssim_p
        N^{1/p}\|\Delta_jF\|_{H^p}
        \lesssim_p
        Na_N.
\]
Hence Proposition~\ref{prop:dyadic-mean-lipschitz} gives the
sufficiency of both coefficient conditions.

Conversely, the same proposition and the lower estimate above give
\[
        Na_{2N}=O(1)
        \quad\text{if }F\in\Lambda^p_{1/p},
        \qquad
        Na_{2N}=o(1)
        \quad\text{if }F\in\lambda^p_{1/p},
\]
for \(N=2^j\). Given sufficiently large \(n\), choose \(N=2^j\)
such that \(2N\le n<4N\). By monotonicity,
\[
        na_n\le4Na_{2N}.
\]
Thus \(na_n=O(1)\), respectively \(na_n=o(1)\), which proves the
result.
\end{proof}

\begin{lemma}
\label{lem:app-dyadic-weighted-condensation}
Let \(1<p<\infty\), and let \((b_n)_{n\ge0}\) be nonnegative and
nonincreasing. Put \(g(z)=\sum_{n\ge0}b_nz^n\) and, for \(j\ge0\),
\[
        A_j=2^j\|\Delta_jg\|_{H^p}^p,
        \qquad
        R_j=\sum_{n=2^j}^{2^{j+1}-1}n^{p-1}b_n^p.
\]
Then, for every \(j\ge1\),
\[
        A_j\lesssim_pR_{j-1}+R_j,
        \qquad
        R_j\lesssim_pA_{j-1}+A_j.
\]
\end{lemma}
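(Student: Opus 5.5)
The approach mirrors the proof of Lemma~\ref{lem:app-monotone-lambda}. We compute \(\|\Delta_jg\|_{H^p}^p\) for a monotone block via the Hardy--Littlewood type estimate \cite[Theorem~6.2.14]{JevticVukoticArsenovic2016}, used in its finite polynomial form. Then we compare the result with the dyadic sums \(R_j\) using only monotonicity of \((b_n)\).

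Fix \(j\ge1\) and put \(N=2^j\). Factoring out \(z^N\), we get
\[
\Delta_jg(z)=z^N\sum_{m=0}^{N-1}b_{N+m}z^m .
\]
The inner coefficients \((b_{N+m})_{0\le m\le N-1}\), extended by zero, are nonnegative and nonincreasing. Multiplication by \(z^N\) is an isometry, so the cited theorem gives
\[
\|\Delta_jg\|_{H^p}^p\asymp_p\sum_{m=0}^{N-1}(m+1)^{p-2}b_{N+m}^p .
\]
This yields both bounds by monotonicity.

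\emph{Upper bound.} Since every \(b_{N+m}\le b_N\), and \(\sum_{m<N}(m+1)^{p-2}\asymp_pN^{p-1}\) because \(p-2>-1\), we get
\[
A_j\lesssim_p N\cdot N^{p-1}b_N^p=N^pb_N^p .
\]
For \(n\in I_{j-1}\) we have \(b_n\ge b_N\) and \(n\asymp N\). Hence
\[
R_{j-1}\ge \sum_{n\in I_{j-1}}n^{p-1}b_N^p\gtrsim_p N^pb_N^p\ge A_j/C_p .
\]
This gives \(A_j\lesssim_pR_{j-1}\le R_{j-1}+R_j\). For \(j=1\), the set \(I_0=\{0,1\}\) contributes the term \(n=1\), which suffices because \(N\asymp1\).

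\emph{Lower bound.} Keep only the indices \(m\ge N/2\) in the block sum. For these, \((m+1)^{p-2}\asymp_pN^{p-2}\) and \(b_{N+m}\ge b_{2N-1}\), so
\[
A_j\gtrsim_pN\cdot N\cdot N^{p-2}b_{2N-1}^p=N^pb_{2N-1}^p .
\]
This is at least a constant times \(N^pb_{2N}^p\). Therefore \(A_{j-1}\gtrsim_p 2^{(j-1)p}b_{2^j}^p\). On the other side, \(b_n\le b_{2^j}\) on \(I_j\), so
\[
R_j\le \sum_{n\in I_j}n^{p-1}b_{2^j}^p\lesssim_p 2^{jp}b_{2^j}^p\lesssim_pA_{j-1}.
\]
When \(j-1=0\), the same argument runs with \(\Delta_0g=b_0+b_1z\), since \(\|\Delta_0g\|_{H^p}\gtrsim b_1\ge b_2\). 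Hence \(R_j\lesssim_pA_{j-1}+A_j\).

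The main obstacle is the careful application of the coefficient theorem to a \emph{finite} monotone block. The truncated sequence is still nonincreasing, and the cited theorem applies to it in polynomial form with constants independent of \(N\), exactly as it was used in Lemma~\ref{lem:app-monotone-lambda}. Everything else is bookkeeping at the block endpoints and the small cases \(j=1\).
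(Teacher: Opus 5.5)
Your proof is correct and follows the paper's route: you apply the Hardy--Littlewood monotone-coefficient estimate to each dyadic block in finite polynomial form, compare using only monotonicity, and treat $j=1$ through $\Delta_0g=b_0+b_1z$. The one difference is in the bookkeeping. You compare both $A_j$ and $R_j$ with the endpoint value $b_{2^j}$, which gives $A_j\lesssim_pR_{j-1}$ and $R_j\lesssim_pA_{j-1}$ for every $p>1$; this makes the paper's case split unnecessary, namely $A_j\lesssim_pR_j$ when $p\ge2$ and $R_j\lesssim_pA_j$ when $p\le2$.
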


\begin{proof}
Fix \(j\ge1\) and put \(N=2^j\). By
\cite[Theorem~6.2.14]{JevticVukoticArsenovic2016},
\[
        A_j
        \asymp_p
        N\sum_{k=0}^{N-1}(k+1)^{p-2}b_{N+k}^p,
        \qquad
        R_j
        \asymp_p
        N^{p-1}\sum_{k=0}^{N-1}b_{N+k}^p.
\]

If \(p\ge2\), then
\((k+1)^{p-2}\le N^{p-2}\), so \(A_j\lesssim_pR_j\).
If \(1<p<2\), then by monotonicity,
\[
        A_j
        \lesssim_p
        Nb_N^p\sum_{k=0}^{N-1}(k+1)^{p-2}
        \lesssim_p
        N^pb_N^p.
\]
Moreover, since \(b_n\ge b_N\) and \(n\asymp N\) for
\(N/2\le n<N\),
\[
        R_{j-1}
        \gtrsim_p
        N^{p-1}\sum_{n=N/2}^{N-1}b_N^p
        \asymp_p N^pb_N^p.
\]
Hence \(A_j\lesssim_pR_{j-1}\).
This proves \(A_j\lesssim_pR_{j-1}+R_j\).

For the reverse estimate, suppose first that \(1<p\le2\). Since
\((k+1)^{p-2}\ge N^{p-2}\) for \(0\le k<N\), and
\(N+k\asymp N\), we obtain
\[
        R_j\lesssim_p A_j.
\]
Suppose now that \(p>2\). Splitting the sum at \(N/2\), we have
\[
\begin{aligned}
        R_j
        &\lesssim_p
        N^{p-1}\sum_{k<N/2}b_{N+k}^p
        +
        N^{p-1}\sum_{N/2\le k<N}b_{N+k}^p \\
        &\lesssim_p
        N^pb_N^p+A_j.
\end{aligned}
\]
For \(j\ge2\), we have \(N=2^j\ge4\). For
\(N/4\le k<N/2\), \(k+1\asymp N\) and
\(b_{N/2+k}\ge b_N\). Hence
\[
        A_{j-1}
        \gtrsim_p
        N\sum_{N/4\le k<N/2}
        (k+1)^{p-2}b_{N/2+k}^p
        \gtrsim_p
        N^pb_N^p.
\]
Thus
\[
        R_j\lesssim_p A_{j-1}+A_j,
        \qquad j\ge2.
\]
For \(j=1\), monotonicity and the definition of \(A_0\) give
\[
        R_1\lesssim_p b_1^p\lesssim_p A_0.
\]
The proof is complete.
\end{proof}

\begin{lemma}
\label{lem:app-positive-monotone-coefficients}
Let \(1<p<\infty\), \(\beta>-1\), and let
\(g(z)=\sum_{n\ge0}a_nz^n\), where \((a_n)\) is nonnegative and
nonincreasing. Then
\[
        \|g'\|_{A^p_\beta}^p
        \asymp_{p,\beta}
        a_1^p+
        \sum_{n=2}^{\infty}n^{2p-\beta-3}a_n^p.
\]
\end{lemma}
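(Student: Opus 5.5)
We argue through the dyadic model for weighted Bergman spaces, Proposition~\ref{prop:bergman-dyadic-model}, applied to $g'$, combined with the monotone-coefficient $H^p$ estimate \cite[Theorem~6.2.14]{JevticVukoticArsenovic2016} already used in Lemmas~\ref{lem:app-monotone-lambda} and~\ref{lem:app-dyadic-weighted-condensation}. The difficulty is that the Taylor coefficients of $g'$, namely $(n+1)a_{n+1}$, are not monotone. We therefore do not work with $g'$ directly. Instead we reduce to $g$ via Proposition~\ref{prop:dirichlet-dyadic-model}, which gives
\[
\|g'\|_{A^p_\beta}^p+|g(0)|^p\asymp_{p,\beta}|a_0|^p+\sum_{j\ge0}2^{j(p-\beta-1)}\|\Delta_jg\|_{H^p}^p .
\]
The $|g(0)|^p$ term must be handled separately, since $a_0$ does not appear on the right of the claimed estimate. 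The cleanest remedy is to apply the proposition to $g-a_0$. Then $\Delta_0(g-a_0)=a_1z$, and this block contributes exactly $a_1^p$.

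For $j\ge1$ we set $A_j=2^j\|\Delta_jg\|_{H^p}^p$ and $R_j=\sum_{n\in I_j}n^{p-1}a_n^p$, as in Lemma~\ref{lem:app-dyadic-weighted-condensation}. Since the sequence $(a_n)$ is nonnegative and nonincreasing, that lemma yields $A_j\lesssim R_{j-1}+R_j$ and $R_j\lesssim A_{j-1}+A_j$. Because $2^{j(p-\beta-1)}\|\Delta_jg\|_{H^p}^p=2^{j(p-\beta-2)}A_j$, and the factor $2^{j(p-\beta-2)}$ changes by at most a bounded factor between neighbouring $j$, summing over $j$ gives
\[
\sum_{j\ge1}2^{j(p-\beta-2)}A_j\asymp \sum_{j\ge0}2^{j(p-\beta-2)}R_j ,
\]
up to boundary terms. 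The lower-index boundary terms are $A_0$ and $R_0=a_1^p$, and both are controlled by $a_1^p$ or absorbed into it: $A_0=\|a_0+a_1z\|^p$, but we are working with $g-a_0$, so only $a_1$ enters. Finally, on $I_j$ we have $n\asymp 2^j$, so
\[
2^{j(p-\beta-2)}R_j\asymp\sum_{n\in I_j}n^{2p-\beta-3}a_n^p ,
\]
and summing over $j\ge1$ produces $\sum_{n\ge2}n^{2p-\beta-3}a_n^p$. Together with the $a_1^p$ term this is exactly the right-hand side of the lemma.

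The main obstacle is the index-shift bookkeeping at $j=0,1$. In the estimate $R_1\lesssim A_0$, the term $A_0$ for $g-a_0$ is $a_1^p$ up to constants, so it is fine. The reverse direction, $A_1\lesssim R_0+R_1$, involves $R_0=a_1^p$, which is also covered. Everything else follows directly from Lemma~\ref{lem:app-dyadic-weighted-condensation}, since $g-a_0$ has the coefficient sequence $(0,a_1,a_2,\dots)$. That sequence is not monotone at $n=0$, but the lemma only uses monotonicity for indices $\ge1$ when $j\ge1$. The one place it is used at index $0$ is $R_1\lesssim b_1^p$, which relies on $b_2,b_3\le b_1$. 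For safety, we could instead apply the lemma to $g$ itself and note that $\Delta_j g=\Delta_j(g-a_0)$ for every $j\ge1$.
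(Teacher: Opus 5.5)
Your proposal is correct and follows essentially the paper's route: Proposition~\ref{prop:dirichlet-dyadic-model} applied to \(g-g(0)\), the two-sided block comparison of Lemma~\ref{lem:app-dyadic-weighted-condensation} weighted by \(2^{j(p-\beta-2)}\), and \(n\asymp 2^j\) on \(I_j\). The only difference is the bookkeeping at \(j=1\): the paper notes \(A_1\asymp a_2^p+a_3^p\asymp R_1\) directly and uses the lemma only for \(j\ge2\), so it never needs \(A_0\) or monotonicity at index \(0\); your check of where the lemma's proof uses monotonicity is valid, but your ``for safety'' variant (applying the lemma to \(g\) itself) would need \(R_1\lesssim a_1^p\) directly instead of \(R_1\lesssim A_0\), since that \(A_0\) involves \(a_0\).
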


\begin{proof}
Applying Proposition~\ref{prop:dirichlet-dyadic-model} with
\(\alpha=\beta\) to \(g-g(0)\), we obtain
\[
        \|g'\|_{A^p_\beta}^p
        \asymp_{p,\beta}
        |a_1|^p+
        \sum_{j\ge1}
        2^{j(p-\beta-1)}
        \|\Delta_jg\|_{H^p}^p.
\]
For \(j\ge1\), put
\[
        A_j=2^j\|\Delta_jg\|_{H^p}^p,
        \qquad
        R_j=\sum_{n=2^j}^{2^{j+1}-1}n^{p-1}a_n^p.
\]
Since \(I_1=\{2,3\}\), we have
\(A_1=2\|\Delta_1g\|_{H^p}^p\asymp_p a_2^p+a_3^p\asymp_p R_1\).
For \(j\ge2\), since
\(
        2^{(j-1)(p-\beta-2)}
        \asymp_{p,\beta}
        2^{j(p-\beta-2)},
\)
Lemma~\ref{lem:app-dyadic-weighted-condensation} gives
\[
\begin{aligned}
        \sum_{j\ge2}2^{j(p-\beta-2)}A_j
        &\lesssim_p
        \sum_{j\ge2}2^{j(p-\beta-2)}
        (R_{j-1}+R_j) \\
        &\lesssim_{p,\beta}
        \sum_{j\ge1}2^{j(p-\beta-2)}R_j.
\end{aligned}
\]
The reverse estimate follows in the same way from
\(R_j\lesssim_p A_{j-1}+A_j\). Hence
\[
        \sum_{j\ge1}2^{j(p-\beta-2)}A_j
        \asymp_{p,\beta}
        \sum_{j\ge1}2^{j(p-\beta-2)}R_j.
\]
Since \(n\asymp2^j\) for \(2^j\le n<2^{j+1}\),
\[
\begin{aligned}
        \sum_{j\ge1}2^{j(p-\beta-2)}R_j
        &=
        \sum_{j\ge1}
        \sum_{n=2^j}^{2^{j+1}-1}
        2^{j(p-\beta-2)}n^{p-1}a_n^p \\
        &\asymp_{p,\beta}
        \sum_{n=2}^{\infty}n^{2p-\beta-3}a_n^p.
\end{aligned}
\]
Combining the preceding estimates yields
\[
        \|g'\|_{A^p_\beta}^p
        \asymp_{p,\beta}
        |a_1|^p+
        \sum_{n=2}^{\infty}n^{2p-\beta-3}a_n^p,
\]
which proves the lemma.\end{proof}

\begin{lemma}
\label{lem:app-positive-measure-reductions}
Let \(1<p<\infty\). Let \(\mu\) be a finite positive Borel measure on
\([0,1)\), and put
\[
        \mu_n=\int_{[0,1)}t^n\,d\mu(t),
        \qquad
        F_\mu(z)=\sum_{n=0}^{\infty}\mu_nz^n.
\]
Then the following hold.

\begin{enumerate}[label=\textup{(\roman*)},leftmargin=2.2em]

\item Let \(1<q<\infty\) and \(\beta>-1\), and put
\[
        U_J
        =
        \sum_{j\ge J}
        2^{j(q-\beta-1)}
        \|\Delta_jF_\mu\|_{H^q}^q,
        \qquad
        V_J
        =
        \sum_{n\ge2^J}
        n^{2q-\beta-3}\mu_n^q,
\]
and
\[
        T_N
        =
        \sum_{n\ge N}
        n^{2q-\beta-3}\mu_n^q.
\]

\begin{enumerate}[label=\textup{(\alph*)},leftmargin=2.0em]

\item If \(q<p\) and \(1/r=1/q-1/p\), then
\[
\begin{aligned}
        \sum_{J\ge0}(J+1)^{r/q'}U_J^{r/q}
        &\asymp_{p,q,\beta}
        \mu_0^r+
        \sum_{J\ge1}(J+1)^{r/q'}V_J^{r/q} \\
        &\asymp_{p,q,\beta}
        \mu_0^r+
        \sum_{N=2}^{\infty}
        \frac{(\log N)^{r/q'}}{N}T_N^{r/q}.
\end{aligned}
\]

\item If \(p\le q\), then
\[
\begin{aligned}
        \sup_{J\ge0}(J+1)^{q/p'}U_J
        &\asymp_{p,q,\beta}
        \mu_0^q+
        \sup_{J\ge1}(J+1)^{q/p'}V_J \\
        &\asymp_{p,q,\beta}
        \mu_0^q+
        \sup_{N\ge2}(\log N)^{q/p'}T_N.
\end{aligned}
\]
Moreover,
\[
\begin{aligned}
        \limsup_{J\to\infty}(J+1)^{q/p'}U_J
        &\asymp_{p,q,\beta}
        \limsup_{J\to\infty}(J+1)^{q/p'}V_J \\
        &\asymp_{p,q,\beta}
        \limsup_{N\to\infty}(\log N)^{q/p'}T_N.
\end{aligned}
\]

\end{enumerate}

\item Let \(1<q<\infty\), \(\beta>-1\), and \(\alpha>p-2\). Put
\(s=(\alpha+2)/p-(\beta+1)/q\), and define
\[
        X_j=2^{js}\|\Delta_jF_\mu\|_{H^q},
        \qquad
        Y_j=
        \left(
        \sum_{n=2^j}^{2^{j+1}-1}
        n^{q+sq-2}\mu_n^q
        \right)^{1/q}.
\]

\begin{enumerate}[label=\textup{(\alph*)},leftmargin=2.0em]

\item If \(q<p\) and \(1/r=1/q-1/p\), then
\[
        \sum_{j\ge0}X_j^r
        \asymp_{p,q,\alpha,\beta}
        \mu_0^r+\sum_{j\ge0}Y_j^r.
\]

\item If \(p\le q\), then
\[
        \sup_{j\ge0}X_j
        \asymp_{p,q,\alpha,\beta}
        \mu_0+\sup_{j\ge0}Y_j,
\]
and
\[
        \limsup_{j\to\infty}X_j
        \asymp_{p,q,\alpha,\beta}
        \limsup_{j\to\infty}Y_j.
\]

\end{enumerate}
\end{enumerate}
\end{lemma}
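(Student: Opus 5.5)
The plan is to reduce everything to Lemma~\ref{lem:app-dyadic-weighted-condensation}, applied to \(b_n=\mu_n\), which is nonnegative and nonincreasing. Set
\[
a_j=2^j\|\Delta_jF_\mu\|_{H^q}^q,\qquad R_j=\sum_{n\in I_j}n^{q-1}\mu_n^q .
\]
That lemma, applied with exponent \(q\), gives \(a_j\lesssim R_{j-1}+R_j\) and \(R_j\lesssim a_{j-1}+a_j\) for \(j\ge1\). For \(j=0\) we have \(\|\Delta_0F_\mu\|_{H^q}^q\asymp \mu_0^q+\mu_1^q\asymp\mu_0^q\) by monotonicity. This is the source of the \(\mu_0\) terms in the statement.

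In part (i) the weights are \(2^{j(q-\beta-1)}\|\Delta_jF_\mu\|_{H^q}^q=2^{j(q-\beta-2)}a_j\). Since \(n\asymp2^j\) on \(I_j\), we have \(2^{j(q-\beta-2)}R_j\asymp\sum_{n\in I_j}n^{2q-\beta-3}\mu_n^q\). Summing the two-sided comparison over \(j\ge J\), and using \(2^{(j-1)c}\asymp2^{jc}\), gives for \(J\ge1\)
\[
U_J\lesssim V_{J-1},\qquad V_J\lesssim U_{J-1}.
\]
The index shift by one is harmless, since \((J+1)\asymp J\) for \(J\ge1\). The term \(V_0\) would involve \(n\ge1\), but \(n=1\) lies in \(I_0\) and is controlled by \(\mu_0\). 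So we split off the \(J=0\) (and \(J=1\)) contributions: \(U_0\asymp\mu_0^q+U_1\), and \(V_1\le V_0\lesssim\mu_0^q+V_1\).

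For the series in (i)(a), the shift is handled by the substitution \(J\mapsto J\pm1\) in \(\sum(J+1)^{r/q'}U_J^{r/q}\). For the sup in (i)(b) and the limsup it is handled in the same way; the limsup is unaffected by \(\mu_0\) because \(U_J,V_J\) for large \(J\) do not involve it. Consequently the operations \(\sup\), \(\limsup\) and the weighted \(\ell^{r/q}\) sums transfer the comparisons directly.

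To pass from \(V_J\) to \(T_N\), I use that \(T_N\) is nonincreasing. For \(2^J\le N<2^{J+1}\) we have \(V_{J+1}\le T_N\le V_J\) and \(\log N\asymp J+1\). Hence the sup and limsup forms follow at once. For the sum form, grouping the \(N\) in dyadic blocks gives
\[
\sum_{N\in I_J}\frac{(\log N)^{r/q'}}{N}T_N^{r/q}
\]
squeezed between \((J+1)^{r/q'}V_{J+1}^{r/q}\) and \((J+1)^{r/q'}V_J^{r/q}\) up to constants. Summing over \(J\) and shifting indices once more gives the equivalence.

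Part (ii) is the same computation without tails. Here \(X_j^q=2^{jsq}\|\Delta_jF_\mu\|_{H^q}^q=2^{j(sq-1)}a_j\) and \(Y_j^q\asymp2^{j(sq-1)}R_j\), because \(n^{q+sq-2}\asymp2^{j(sq-1)}n^{q-1}\) on \(I_j\). Hence \(X_j^q\lesssim Y_{j-1}^q+Y_j^q\) and \(Y_j^q\lesssim X_{j-1}^q+X_j^q\) for \(j\ge1\), together with \(X_0\asymp\mu_0\) and \(Y_0\lesssim\mu_0\). Raising to the power \(r/q\) (respectively taking sup or limsup) and using \((x+y)^{r/q}\lesssim x^{r/q}+y^{r/q}\) yields (a) and (b).

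The only genuinely delicate point is already contained in Lemma~\ref{lem:app-dyadic-weighted-condensation}: the \(H^q\) block norm of a monotone block is comparable to the weighted \(\ell^q\) coefficient sum only up to neighbouring blocks, which is why every comparison carries a one-step index shift. What remains is careful bookkeeping of the \(j=0,1\) boundary terms, checking that they are dominated by \(\mu_0\) through \(\mu_1,\mu_2,\mu_3\le\mu_0\) and do not enter the limsup statements.
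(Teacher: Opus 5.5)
Your proposal is correct and follows the paper's proof essentially step for step. Like the paper, you apply Lemma~\ref{lem:app-dyadic-weighted-condensation} with exponent \(q\) to \(b_n=\mu_n\), and derive the one-step-shifted comparisons \(U_J\lesssim V_{J-1}\), \(V_J\lesssim U_{J-1}\) (respectively \(X_j^q\lesssim Y_{j-1}^q+Y_j^q\), \(Y_j^q\lesssim X_{j-1}^q+X_j^q\)). You isolate \(A_0\asymp\mu_0^q\) and \(R_0=\mu_1^q\le\mu_0^q\), and pass to \(T_N\) via \(V_{J+1}\le T_N\le V_J\) and \(\log N\asymp J+1\). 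The one small difference is at the boundary term \(V_1\) in the \(T_N\) comparison: your appeal to \(\mu_2,\mu_3\le\mu_0\) absorbs it into \(\mu_0^q+V_2\), whereas the paper recovers it from the \(N=2\) term using \(T_2=V_1\). Either works.
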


\begin{proof}
Since \(\mu\) is positive, \((\mu_n)\) is nonnegative and
nonincreasing.

For \textup{(i)}, put
\[
        A_j
        =
        2^{j(q-\beta-1)}
        \|\Delta_jF_\mu\|_{H^q}^q,
        \qquad
        R_j
        =
        \sum_{n=2^j}^{2^{j+1}-1}
        n^{2q-\beta-3}\mu_n^q.
\]
Lemma~\ref{lem:app-dyadic-weighted-condensation}, applied with
exponent \(q\), gives
\begin{equation}
\label{eq:app-positive-AR}
        A_j\lesssim_{q,\beta}R_{j-1}+R_j,
        \qquad
        R_j\lesssim_{q,\beta}A_{j-1}+A_j,
        \qquad j\ge1.
\end{equation}
Moreover,
\[
        A_0=\|\mu_0+\mu_1z\|_{H^q}^q\asymp_q\mu_0^q,
        \qquad
        R_0=\mu_1^q\le\mu_0^q.
\]
Since
\[
        U_J=\sum_{j\ge J}A_j,
        \qquad
        V_J=\sum_{j\ge J}R_j=T_{2^J},
\]
summing \eqref{eq:app-positive-AR} gives
\begin{equation}
\label{eq:app-positive-UV}
        U_J\lesssim_{q,\beta}V_{J-1},
        \qquad
        V_J\lesssim_{q,\beta}U_{J-1},
        \qquad J\ge1.
\end{equation}

If \(2^J\le N<2^{J+1}\), \(J\ge1\), then
\begin{equation}
\label{eq:app-positive-TN}
        V_{J+1}\le T_N\le V_J,
        \qquad
        \log N\asymp J+1,
        \qquad
        \sum_{m=2^J}^{2^{J+1}-1}\frac1m\asymp1.
\end{equation}

Suppose first that \(q<p\) and \(1/r=1/q-1/p\). Since
\((J+1)^{r/q'}\asymp_{p,q}J^{r/q'}\) for \(J\ge1\),
\eqref{eq:app-positive-UV} gives
\[
        \sum_{J\ge1}(J+1)^{r/q'}U_J^{r/q}
        \lesssim_{p,q,\beta}
        V_0^{r/q}+
        \sum_{J\ge1}(J+1)^{r/q'}V_J^{r/q},\]
        and 
       \[
        \sum_{J\ge1}(J+1)^{r/q'}V_J^{r/q}
        \lesssim_{p,q,\beta}
        \sum_{J\ge0}(J+1)^{r/q'}U_J^{r/q}.
\]
Since
\[
        U_0=A_0+U_1
        \lesssim_{q,\beta}
        \mu_0^q+V_0
        \lesssim_{q,\beta}
        \mu_0^q+V_1,
\]
and \(\mu_0^q\lesssim_q U_0\), it follows that
\begin{equation}
\label{eq:app-positive-UV-sum}
        \sum_{J\ge0}(J+1)^{r/q'}U_J^{r/q}
        \asymp_{p,q,\beta}
        \mu_0^r+
        \sum_{J\ge1}(J+1)^{r/q'}V_J^{r/q}.
\end{equation}

For \(2^J\le N<2^{J+1}\), \eqref{eq:app-positive-TN} gives
\[
\begin{aligned}
        (J+1)^{r/q'}V_{J+1}^{r/q}
        &\lesssim_{p,q}
        \sum_{N=2^J}^{2^{J+1}-1}
        \frac{(\log N)^{r/q'}}{N}T_N^{r/q}\\
        &\lesssim_{p,q}
        (J+1)^{r/q'}V_J^{r/q}.
\end{aligned}
\]
Summing over \(J\ge1\) and using \(T_2=V_1\), we obtain
\begin{equation}
\label{eq:app-positive-VT-sum}
        \sum_{J\ge1}(J+1)^{r/q'}V_J^{r/q}
        \asymp_{p,q}
        \sum_{N=2}^{\infty}
        \frac{(\log N)^{r/q'}}{N}T_N^{r/q}.
\end{equation}
Combining \eqref{eq:app-positive-UV-sum} and
\eqref{eq:app-positive-VT-sum} proves \textup{(i)(a)}.

Suppose now that \(p\le q\), and put \(b=q/p'\). Since
\((J+1)^b\asymp_{p,q}J^b\) for \(J\ge1\),
\eqref{eq:app-positive-UV} gives
\[
        \sup_{J\ge1}(J+1)^bU_J
        \lesssim_{p,q,\beta}
        V_0+
        \sup_{J\ge1}(J+1)^bV_J,
\]
and
\[
        \sup_{J\ge1}(J+1)^bV_J
        \lesssim_{p,q,\beta}
        \sup_{J\ge0}(J+1)^bU_J.
\]
Since \(V_0=R_0+V_1\), \(R_0\le\mu_0^q\), and
\(A_0\asymp_q\mu_0^q\), we obtain
\begin{equation}
\label{eq:app-positive-UV-sup}
        \sup_{J\ge0}(J+1)^bU_J
        \asymp_{p,q,\beta}
        \mu_0^q+
        \sup_{J\ge1}(J+1)^bV_J.
\end{equation}
For \(2^J\le N<2^{J+1}\), \eqref{eq:app-positive-TN} gives
\[
        (J+1)^bV_{J+1}
        \lesssim_{p,q}
        \sup_{2^J\le N<2^{J+1}}(\log N)^bT_N
        \lesssim_{p,q}
        (J+1)^bV_J.
\]
Taking the supremum over \(J\ge1\), we obtain
\begin{equation}
\label{eq:app-positive-VT-sup}
        \sup_{J\ge1}(J+1)^bV_J
        \asymp_{p,q}
        \sup_{N\ge2}(\log N)^bT_N.
\end{equation}
Combining \eqref{eq:app-positive-UV-sup} and
\eqref{eq:app-positive-VT-sup} proves the first assertion in
\textup{(i)(b)}.

For the limit superior, \eqref{eq:app-positive-UV} gives
\[
        \limsup_{J\to\infty}(J+1)^bU_J
        \asymp_{p,q,\beta}
        \limsup_{J\to\infty}(J+1)^bV_J,
\]
while \eqref{eq:app-positive-TN} gives
\[
        \limsup_{J\to\infty}(J+1)^bV_J
        \asymp_{p,q}
        \limsup_{N\to\infty}(\log N)^bT_N.
\]
Combining the last two estimates proves the limit superior statement
in \textup{(i)(b)}.

For \textup{(ii)}, apply
Lemma~\ref{lem:app-dyadic-weighted-condensation} with exponent \(q\)
and multiply its two estimates by \(2^{j(sq-1)}\). Since
\(2^{(j-1)(sq-1)}\asymp_{p,q,\alpha,\beta}2^{j(sq-1)}\) and
\(n\asymp2^j\) for \(2^j\le n<2^{j+1}\), taking \(q\)-th roots gives
\begin{equation}
\label{eq:app-positive-XY}
        X_j
        \lesssim_{p,q,\alpha,\beta}
        Y_{j-1}+Y_j,
        \qquad
        Y_j
        \lesssim_{p,q,\alpha,\beta}
        X_{j-1}+X_j,
        \qquad j\ge1.
\end{equation}
Also,
\[
        X_0=\|\mu_0+\mu_1z\|_{H^q}\asymp_q\mu_0,
        \qquad
        Y_0=\mu_1\le\mu_0.
\]

If \(q<p\), then \eqref{eq:app-positive-XY} gives
\[
        \sum_{j\ge1}X_j^r
        \lesssim_{p,q,\alpha,\beta}
        \sum_{j\ge0}Y_j^r,
        \qquad\text{and}\qquad
        \sum_{j\ge1}Y_j^r
        \lesssim_{p,q,\alpha,\beta}
        \sum_{j\ge0}X_j^r.
\]
Since \(X_0\asymp_q\mu_0\) and \(Y_0\le\mu_0\), we obtain
\[
        \sum_{j\ge0}X_j^r
        \asymp_{p,q,\alpha,\beta}
        \mu_0^r+\sum_{j\ge0}Y_j^r.
\]
This proves \textup{(ii)(a)}.

If \(p\le q\), \eqref{eq:app-positive-XY} gives
\[
        \sup_{j\ge1}X_j
        \lesssim_{p,q,\alpha,\beta}
        \sup_{j\ge0}Y_j,
        \qquad\text{and}\qquad
        \sup_{j\ge1}Y_j
        \lesssim_{p,q,\alpha,\beta}
        \sup_{j\ge0}X_j.
\]
Using \(X_0\asymp_q\mu_0\) and \(Y_0\le\mu_0\), we obtain
\[
        \sup_{j\ge0}X_j
        \asymp_{p,q,\alpha,\beta}
        \mu_0+\sup_{j\ge0}Y_j.
\]
Finally, \eqref{eq:app-positive-XY} gives
\[
        \limsup_{j\to\infty}X_j
        \lesssim_{p,q,\alpha,\beta}
        \limsup_{j\to\infty}Y_j,
        \qquad\text{and}\qquad
        \limsup_{j\to\infty}Y_j
        \lesssim_{p,q,\alpha,\beta}
        \limsup_{j\to\infty}X_j.
\]
Hence
\[
        \limsup_{j\to\infty}X_j
        \asymp_{p,q,\alpha,\beta}
        \limsup_{j\to\infty}Y_j.
\]
This proves \textup{(ii)(b)}.
\end{proof}

\medskip
\noindent\textbf{Funding.}\enspace
The author was supported by the Department of Education of Guangdong
Province (Grant No.~2023KTSCX072), the Guangdong Basic and Applied Basic
Research Foundation (Grant No.~2024A1515012551), and Lingnan Normal
University (Grant No.~LT2410).

\smallskip
\noindent\textbf{Declaration of competing interest.}\enspace
The author declares that there are no known competing financial
interests or personal relationships that could have appeared to
influence the work reported in this paper.

\smallskip
\noindent\textbf{Data availability.}\enspace
No data was used for the research described in this article.

\smallskip
\noindent\textbf{Declaration of generative AI and AI-assisted
technologies in the manuscript preparation process.}\enspace
During the preparation of this manuscript, the author used OpenAI's
ChatGPT for English-language editing, organization of the exposition,
preliminary consistency checks of the presentation and references, and
assistance with the construction of the counterexample in
Theorem~\ref{thm:endpoint-failure}. The author independently reviewed
and verified all AI-assisted content and takes full responsibility for
the content of the manuscript.

\end{document}